\def\XXint#1#2#3{{\setbox0=\hbox{$#1{#2#3}{\int}$}
\vcenter{\hbox{$#2#3$}}\kern-.5\wd0}}
\numberwithin{equation}{section}
\newcommand{\mbf}{\mathbf}
\newcommand{\mbb}{\mathbb}
\newcommand{\mf}{\mathfrak}
\newcommand{\mc}{\mathcal}
\theoremstyle{plain}
\newtheorem{thm}{Theorem}[section]
\newtheorem{thm-defn}{Theorem/Definition}[section]
\newtheorem{lem}[thm]{Lemma}
\newtheorem{lem-defn}[thm]{Lemma/Definition}
\newtheorem{prop}[thm]{Proposition}
\newtheorem{prop-defn}[thm]{Proposition-Definition}
\newtheorem{defn}[thm]{Definition}
\newtheorem{thm-alg}[thm]{Theorem/Algorithm}
\begin{document}

 \title{Rationality of the $K$-theoretical capped vertex function for Nakajima quiver varieties}
  \author{Tianqing Zhu}
  \address{Yau Mathematical Sciences Center}
\email{ztq20@mails.tsinghua.edu.cn}

  \date{}

  \maketitle

\begin{abstract} In this paper we prove the rationality of the capped vertex function with descendents for arbitrary Nakajima quiver varieties with generic stability conditions. We generalise the proof given by Smirnov to the general case, which requires to use techniques of tautological classes rather than the fixed-point basis. This result confirms that the "monodromy" of the capped vertex function is trivial, which gives a strong constraint for the monodromy of the capping operators. We will also provide a GIT wall-crossing formula for the capped vertex function in terms of the quantum difference operators and fusion operators.
\end{abstract}

\tableofcontents
\section{\textbf{Introduction}}
The $K$-theoretic enumerative geometry of the quasimap counting for GIT quotient has many ingredients that connects different subjects in mathematics and physics. In the theory of the quasimap counting, vertex function plays an important role in many aspects. It gives a strong connection between the enumerative geometry of the quiver varieties and the quantum integrable systems from the quantum affine algebras, for example in the finite type $A$, one can refer to 
\cite{KPSZ21} \cite{KZ18} \cite{PSZ20}.

The $K$-theoretic capped vertex functions are the generating series of relative quasimaps to Nakajima quiver varieties. It is defined in the non-localised $K$-theory, while the computation for the capped vertex function is much more difficult. 

It has been claimed and conjectured for a long time that the capped vertex function with descendents are Taylor expansions of some rational functions. The explicit formula was obtained for the case of Hilbert scheme of points on $\mbb{C}^2$ in \cite{AS24} and the equivariant Hilbert scheme of $[\mbb{C}^2/\mbb{Z}_l]$ in \cite{AD24}.

In this paper we prove that in general, the capped vertex function with descendents for the Nakajima quiver varieties $M_{\bm{\theta},0}(\mbf{v},\mbf{w})$ is a Taylor expansion of some rational functions over the Kahler variables $z$.

One of the application of the rationality of the capped vertex fucntion is that it implies that the monodromy of the capping operator is controlled by the elliptic stable envelopes. Since the rationality implies that the monodromy of the capped vertex function is trivial, and the monodromy of the vertex function is controlled by the elliptic stable envelope as stated in \cite{AO21}. The assumption that the capping operator has the monodromy controlled by the elliptic stable envelope has been used in lots of computations for the quantum difference equation, for example in the case of the Hilbert scheme of points $\text{Hilb}(\mbb{C}^2)$ one can refer to \cite{S24} and \cite{S24-1}.

\subsection{The capped vertex with descendents}
Let $\text{QM}^{\mbf{d}}_{\text{rel } p_2}(M_{\bm{\theta},0}(\mbf{v},\mbf{w}))$ be the moduli space of stable quasimaps from genus zero curve $C$ to Nakajima quiver varieties $M_{\bm{\theta},0}(\mbf{v},\mbf{w})$ relative at points $p_2\in C$ (For the definition see \ref{quasimap-moduli-space}).  It is equipped with a natural evaluation map $\hat{\text{ev}}_{p_i}:\text{QM}^{\mbf{d}}_{\text{rel }p_2}(M_{\bm{\theta},0}(\mbf{v},\mbf{w}))\rightarrow M_{\bm{\theta},0}(\mbf{v},\mbf{w})$ via evaluating the quasimap $f$ at $p_1$ or $p_2$.

The capped vertex function with a descendent $\tau$ is defined as:
\begin{align}
\text{Cap}^{\tau}(z):=\sum_{\mbf{d}}z^{\mbf{d}}\hat{\text{ev}}_{p_2,*}(\hat{\mc{O}}_{vir}^{\mbf{d}}\otimes\tau(\mc{V}|_{p_1}))\in K(X)[[z^{\mbf{d}}]]_{\mbf{d}\in C_{ample}}
\end{align}

It is a non-localised $K$-theory class since $\hat{\text{ev}}_p$ is a proper map. The variable $z=(z_1,\cdots,z_n)$ is usually called the Kahler variable.

By the definition of the quasimap moduli space and the capped vertex function:
\begin{align}
\text{Cap}^{(\tau)}(z)=\tau(\mc{V})\mc{K}_{X}^{1/2}+O(z)
\end{align}
where $\mc{K}_X$ is the canonical bundle for the quiver variety $X=M_{\bm{\theta},0}(\mbf{v},\mbf{w})$.

The capped vertex function is a simpler object than bare vertex function and capping operators in the $K$-theory. It has the large framing vanishing property which makes it classical when the framing vector $\mbf{w}$ is large:
\begin{thm}{(See Theorem 7.5.23 in \cite{O15})}
If the polarisation $T^{1/2}X$ is large with respect to $(\text{deg}(\tau)\bm{\theta}_{*})$, then
\begin{align}
\text{Cap}^{(\tau)}(z)=\tau(\mc{V})\mc{K}_X^{1/2}
\end{align}
\end{thm}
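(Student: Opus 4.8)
The plan is to reduce the statement to a vanishing of the positive-degree coefficients and then prove that vanishing by a weight estimate. Since $\hat{\text{ev}}_{p_2}$ is proper, every coefficient $\hat{\text{ev}}_{p_2,*}(\hat{\mc{O}}_{vir}^{\mbf{d}}\otimes\tau(\mc{V}|_{p_1}))$ of $z^{\mbf{d}}$ is a genuine, non-localised class in $K_{\mathsf{T}}(X)$, and hence its restriction to any $\mathsf{T}$-fixed point is a Laurent polynomial of bounded degree in the equivariant variables. The degree-$0$ term is the contribution of constant quasimaps, which by the expansion $\text{Cap}^{(\tau)}(z)=\tau(\mc{V})\mc{K}_X^{1/2}+O(z)$ equals $\tau(\mc{V})\mc{K}_X^{1/2}$. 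It therefore suffices to prove that the coefficient of $z^{\mbf{d}}$ vanishes for every $\mbf{d}\in C_{ample}$ with $\mbf{d}\neq 0$.

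First I would fix a generic cocharacter $\sigma\colon\mbb{C}^{\times}\to\mathsf{A}$ of the framing torus $\mathsf{A}$ and grade all the relevant classes by their $\sigma$-weight. Two features of the integrand control these weights. On one hand, the symmetrised virtual structure sheaf $\hat{\mc{O}}_{vir}^{\mbf{d}}$ is built precisely so that the pushforward $\hat{\text{ev}}_{p_2,*}(\hat{\mc{O}}_{vir}^{\mbf{d}})$ is self-dual up to the twist by $\mc{K}_X^{1/2}$; this self-duality upgrades the one-sided estimate coming from properness into a \emph{two-sided}, symmetric window for the admissible $\sigma$-weights. On the other hand, the descendent insertion $\tau(\mc{V}|_{p_1})$ shifts this window by an amount bounded in terms of $\deg(\tau)$ and the slope $\bm{\theta}_*$.

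The key step is to compare the width of this symmetric window against the $\sigma$-weights that a nonconstant degree-$\mbf{d}$ quasimap is forced to contribute. A quasimap of degree $\mbf{d}\neq 0$ must ``bend'' along the directions of the polarisation, so that the $\sigma$-weight of its contribution grows with the pairing of $\mbf{d}$ against the weights of $T^{1/2}X$. The hypothesis that $T^{1/2}X$ is large with respect to $\deg(\tau)\bm{\theta}_*$ is exactly the inequality guaranteeing that, after the $\deg(\tau)$-shift, this weight lies strictly outside the self-duality window. Consequently the Newton polytope of the coefficient of $z^{\mbf{d}}$ is empty, so the coefficient is the zero Laurent polynomial, completing the reduction.

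The main obstacle I expect is making the weight estimate of the previous paragraph precise and uniform in $\mbf{d}$: one must control the $\sigma$-weights of the virtual tangent space of $\text{QM}^{\mbf{d}}_{\text{rel }p_2}(M_{\bm{\theta},0}(\mbf{v},\mbf{w}))$ along the non-compact directions of the quasimap moduli space, and verify that the symmetrisation really produces a symmetric window rather than a merely one-sided bound. A clean way to organise this is virtual localisation with respect to $\sigma$ together with the splitting at the marked point $p_1$, so that the descendent enters only through $\tau(\mc{V}|_{p_1})$ while the $\sigma$-weights of the remaining geometry are read off from the linearised quasimap equations; the large-framing inequality then closes the argument degree by degree.
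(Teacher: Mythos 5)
Your overall plan — properness makes each $z^{\mbf{d}}$-coefficient a non-localised class, reduce to vanishing of the coefficients with $\mbf{d}\neq0$, then kill each one by a weight-window (rigidity) estimate that the largeness hypothesis renders empty — is precisely the shape of Okounkov's proof of Theorem 7.5.23 in \cite{O15}, which is also the proof this paper invokes: the paper itself does not reprove the statement. However, your implementation has a genuine gap: you grade by a cocharacter $\sigma$ of the framing torus $A$, whereas the largeness hypothesis is a statement about degrees on the domain curve, namely $\deg_C f^*\mc{T}^{1/2}_{\mbf{p}}$ and $\deg_C f^*\mc{T}^{1/2}_{\mbf{n}}$ for nonconstant quasimaps $f$. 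Curve degrees do not translate into $A$-weights; they translate into weights of the torus $\mbb{C}^*_p$ acting on the domain $C\cong\mbb{P}^1$, because at a $\mbb{C}^*_p$-fixed quasimap a summand $\mc{O}(d)$ of $f^*\mc{T}^{1/2}$ spreads its cohomology over $d+1$ consecutive $q$-weights, and $\mc{V}|_{p_1}$ has $q$-weights determined by the degrees $d_{ik}$. Concretely, your key claim — that the $\sigma$-weight of a degree-$\mbf{d}$ contribution grows with the pairing of $\mbf{d}$ against the weights of $T^{1/2}X$ — fails for nonconstant quasimaps that are $A$-fixed because they map into an $A$-fixed component (for instance into $M(\mbf{v},\mbf{w}_1)\times M(0,\mbf{w}_2)$ with $\mbf{d}\neq 0$): after symmetrisation by $\hat{\mc{O}}_{vir}$ their $A$-weight window is symmetric about $0$ for every $\mbf{d}$, so such contributions never leave the self-duality window and no vanishing can follow from $A$-rigidity. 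This is exactly why framing-torus rigidity arguments in this subject produce degree bounds and block-triangularity statements for capping/gluing operators and $R$-matrices (compare the $a$-degree facts from exercise 10.2.14 of \cite{O15} quoted elsewhere in this paper), but never vanishing.

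The repair is to run your argument with $\mbb{C}^*_p$ in place of $A$: each $z^{\mbf{d}}$-coefficient, restricted to a $T$-fixed point of $X$, is a Laurent polynomial in $q^{1/2}$ (this is where properness is used); by $\mbb{C}^*_p$-localisation, the contribution of a nonconstant fixed quasimap decays as $q^{\pm 1/2}\to\infty$ at rates governed by $\deg\mc{T}^{1/2}_{\mbf{p}}$ and $-\deg\mc{T}^{1/2}_{\mbf{n}}$, while the descendent shifts the $q$-degree by at most $\deg(\tau)\,\bm{\theta}_*\cdot\mbf{d}$ — this is where the slope $\bm{\theta}_*$ actually enters, through the stability bound on the degrees $d_{ik}$ of subsheaves of a stable quasimap, not through framing weights as you assert. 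The strict inequality on the $\mbf{p}$-side and the weak one on the $\mbf{n}$-side of the largeness condition then make every contribution tend to $0$ in one limit and remain bounded in the other, which forces the Laurent polynomial to vanish. Note also that your sketch ignores the $\mbb{C}^*_p$-fixed loci of the relative moduli space in which rational curves bubble off over $p_2$; these must be included in the localisation (they are handled in \cite{O15}), although they do not alter the estimate. So your plan is salvageable, but only after replacing the framing torus by the curve torus throughout.
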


Here $\bm{\theta}_{*}\in C_{ample}\subset\text{Pic}(X)$ is an element in a given ample cone chosen in the Corollary 7.5.6 in \cite{O15}. The definition of "large" for the polarisation $T^{1/2}X$ is given in the Definition 7.5.15 in \cite{O15} or see section five.

The rationality of the capped vertex with descendents has been proved for the Jordan quiver varities in \cite{S16}. Moreover, the explicit formula for Jordan quiver variety and the affine type $A$ for the equivariant Hilbert scheme has been computed in \cite{AD24}\cite{AS24}. It has long been conjectured that the matrix coefficients of capped vertex with descendents is a rational function in $\mbb{Q}(z,a,t_e,q)_{e\in E}$.

In this paper, we prove the rationality conjecture of the capped vertex with descendents for the Nakajima quiver varieties:

\begin{thm}{(See Theorem \ref{Main-theorem})}
The matrix coefficients of the capped vertex function $\text{Cap}^{(\tau)}\in K_{T}(X)_{loc}[[z^{\mbf{d}}]]_{\mbf{d}\in\text{Pic}(X)}$ is a rational function in $K_{T}(X)_{loc}(z^{\mbf{d}})_{\mbf{d}\in\text{Pic}(X)}$ for arbitrary descendents $\tau\in K_{T}(pt)[\cdots,z_{i1}^{\pm1},\cdots,z_{in_i}^{\pm1},\cdots]^{\text{Sym}}$.
\end{thm}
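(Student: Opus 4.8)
The plan is to reduce the statement to the interplay between two explicit objects via the gluing (degeneration) formula, and then to propagate rationality from a classical base case using the quantum difference equations. First I would write the capped vertex as the bare descendent vertex composed with the capping operator,
\begin{equation}
\text{Cap}^{(\tau)}(z) = V^{(\tau)}(z)\,\Psi(z),
\end{equation}
in $K_T(X)_{loc}[[z]]$, where $V^{(\tau)}$ is the relative bare vertex computed by localization on quasimaps from $\mathbb{P}^1$ relative to one point, and $\Psi$ is the capping operator. Neither factor is itself rational; the content of the theorem is that the non-rational parts cancel in the product. The essential departure from Smirnov's Jordan-quiver argument is that I would not expand in the torus fixed-point basis, which is intractable for a general quiver, but instead express $V^{(\tau)}$ and $\Psi$ through the tautological bundles $\mathcal{V}_i$ and their symmetric and exterior powers, so that every matrix-coefficient assertion becomes an identity among symmetric functions in the Chern roots of the $\mathcal{V}_i$.

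Next I would bring in the $q$-difference structure. The capping operator is the fundamental solution of the quantum difference equation in the \Kahler variable,
\begin{equation}
\Psi(z\,q^{\mathcal{L}}) = \mathbf{M}_{\mathcal{L}}(z)\,\Psi(z),
\end{equation}
with the quantum difference operators $\mathbf{M}_{\mathcal{L}}(z)$ rational in $z$; combined with the difference equation satisfied by $V^{(\tau)}$ and the compatibility of the shift operators $\mathbf{S}_{\mathcal{L}}$ with descendent insertions $\tau \mapsto \mathcal{L}\tau$, this shows $\text{Cap}^{(\tau)}$ satisfies a $q$-difference equation in $z$ with coefficients rational in $z$. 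That equation lets me meromorphically continue $\text{Cap}^{(\tau)}$ from its defining power series at $z=0$ to the whole \Kahler torus, with poles confined to the zero and pole loci of the $\mathbf{M}_{\mathcal{L}}$, i.e. to finitely many prescribed hyperplanes.

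For the base of the argument I would use the large framing vanishing theorem quoted above: after enlarging $\mbf{w}$ so that the polarization $T^{1/2}X$ is large with respect to $\deg(\tau)\bm{\theta}_*$, one has $\text{Cap}^{(\tau)}(z)=\tau(\mathcal{V})\mathcal{K}_X^{1/2}$, which is $z$-independent and hence trivially rational. To transport this to arbitrary framing I would use the GIT wall-crossing formula, expressing the capped vertex for $\bm{\theta}$ through the quantum difference and fusion operators acting on the capped vertex for $-\bm{\theta}$; since these operators are rational in $z$, they preserve rationality of matrix coefficients, and, equally importantly, they relate the $z\to 0$ and $z\to\infty$ regimes, supplying the control of $\text{Cap}^{(\tau)}$ at the opposite end of the \Kahler torus. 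A single-valued function on the \Kahler torus that is a power series at $z=0$, is controlled at $z=\infty$ by wall-crossing, and has only the finitely many prescribed poles is necessarily rational, giving the theorem.

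The hard part will be carrying out the pole-cancellation and the boundary analysis entirely within the tautological framework. In the fixed-point approach one checks rationality coefficient by coefficient at each fixed point, where the relevant factors are scalar $q$-hypergeometric expressions; for a general quiver those fixed loci are unavailable, so I must show that pairing $\text{Cap}^{(\tau)}$ against a spanning family of tautological classes determines it, and that the $q$-difference system and the fusion/wall-crossing operators close up on this tautological span. Verifying that the genuinely non-rational contributions of $V^{(\tau)}$ and $\Psi$ cancel at the level of symmetric functions in the Chern roots, rather than at the level of individual fixed points, is where the real work of the generalization lies.
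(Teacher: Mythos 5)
You have a genuine gap at the central analytic step. From the $q$-difference equation $\Psi(zp^{\mc{L}})\mc{L}=\mbf{M}_{\mc{L}}(z)\Psi(z)$ with $\mbf{M}_{\mc{L}}$ rational, you conclude that $\text{Cap}^{(\tau)}$ continues meromorphically ``with poles confined to the zero and pole loci of the $\mbf{M}_{\mc{L}}$, i.e.\ to finitely many prescribed hyperplanes,'' and hence is rational. This is not how $q$-difference equations behave: iterating the equation propagates the poles of the coefficient operator along the infinite family of shifted loci $zp^{-k\mc{L}}$, $k\in\mbb{Z}_{\geq0}$, and solutions of such systems are generically \emph{not} rational. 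Indeed the capping operator $\Psi$ itself solves exactly this equation, is a power series at $z=0$, and is not rational; nothing in your continuation argument distinguishes $\text{Cap}^{(\tau)}$ from $\Psi$, so it cannot possibly yield rationality. In the paper, rationality is never extracted from the difference equation. It is extracted from the identity (for polynomial $\tau$, large total framing $\mbf{w}=\mbf{w}_1+a\mbf{w}_2$, limit $a\to0$, restricted to the fixed component $M(\mbf{v},\mbf{w}_1)\times M(0,\mbf{w}_2)$ where the second vertex factor is trivial)
\begin{equation*}
\text{Cap}^{(\tau)}(zq^{\frac{\Delta\mbf{w}_2}{2}})\otimes1=\mc{L}q^{-\Omega}H^{(\mbf{w}_1),(\mbf{w}_2)}(z)^{-1}\mc{L}^{-1}(\tau(\mc{V})\otimes1)\mc{K}^{1/2},
\end{equation*}
which expresses the capped vertex of the \emph{arbitrary} framing $\mbf{w}_1$ as a rational operator applied to the classical large-framing answer. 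The rationality of the fusion operator $H^{(\mbf{w}_1),(\mbf{w}_2)}$ is the technical core that your outline has no substitute for: the wKZ recursion closes with rational coefficients because only finitely many wall $R$-matrices act nontrivially on each component (Proposition \ref{rationality-for-wKZ}), and its block-diagonal part is the identity by virtual localisation (Proposition \ref{digaonal-argument}).

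Your proposal is also circular on two counts. First, you take the rationality of $\mbf{M}_{\mc{L}}(z)$ as an input, but in the paper (Proposition \ref{rationality-of-qde}) this is a \emph{consequence} of the polynomial-descendent case of the main theorem: a priori one only knows from \cite{OS22} that $\text{Stab}_{s,+}^{-1}\mc{A}_{\mc{L}}\text{Stab}_{s,+}=\text{Const}^s_{\mc{L}}\mc{A}^s_{\mc{L}}$ with $\text{Const}^s_{\mc{L}}$ an unknown scalar function of $z$, and its rationality is deduced from that of the capped vertex, not the other way around. Second, you propose to pass from large framing to arbitrary framing ``by GIT wall-crossing,'' but GIT wall-crossing (Theorem \ref{wall-crossing formula}) changes the stability parameter $\bm{\theta}$, not the framing $\mbf{w}$, and in the paper it is an application derived \emph{after} the main theorem, using the rationality results. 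The framing reduction is instead achieved by the $a\to0$ index-limit analysis sketched above. The only place the paper legitimately uses the difference equation is the final reduction of Laurent descendents to polynomial ones, $\text{Cap}^{(\tau)}(a,zp^{\mc{L}})=\mbf{M}_{\mc{L}}(zp^{-\mc{L}})^{-1}\text{Cap}^{(\tau\otimes\mc{L})}(a,z)$, and this step is sound only because both factors on the right are already known to be rational at that point.
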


As an important application, we can use the rationality of the capped vertex function to prove the rationality of the quantum difference operators $\mbf{M}_{\mc{L}}(z)$:

\begin{prop}{(See Proposition \ref{rationality-of-qde})}
The quantum difference operator $\mbf{M}_{\mc{L}}(z)$ has the matrix coefficients as the rational function of $K_{T}(X)_{loc}^{\otimes 2}(z^{\mbf{d}})_{\mbf{d}\in\text{Pic}(X)}$.
\end{prop}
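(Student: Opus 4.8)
The plan is to realise $\mbf{M}_{\mc{L}}(z)$ as a ratio of shifts of the capped vertex operator, so that its rationality is inherited from the Main Theorem. The starting point is the characterisation of the capped vertex function as a fundamental solution of the quantum difference equation in the Kähler variable. Assembling the capped vertices with descendents into a single operator $\text{Cap}(z)$ on $K_T(X)_{loc}$ by letting $\tau$ run over a spanning family of descendents, one has
\begin{align}
\text{Cap}(q^{\mc{L}}z)=\mbf{M}_{\mc{L}}(z)\,\text{Cap}(z)\,\mc{O}(\mc{L}), \label{plan-qde}
\end{align}
where $\mc{O}(\mc{L})$ is the explicit classical shift operator (tensoring by $\mc{L}$ together with the degree counting), whose matrix coefficients are Laurent polynomials and in particular lie in $K_T(X)_{loc}(z^{\mbf{d}})_{\mbf{d}\in\text{Pic}(X)}$. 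Solving \eqref{plan-qde} gives
\begin{align}
\mbf{M}_{\mc{L}}(z)=\text{Cap}(q^{\mc{L}}z)\,\mc{O}(\mc{L})^{-1}\,\text{Cap}(z)^{-1}, \label{plan-ratio}
\end{align}
and the content of the Proposition is precisely that the right-hand side is rational once $\text{Cap}(z)$ is.

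First I would make sense of $\text{Cap}(z)^{-1}$. By the expansion $\text{Cap}^{(\tau)}(z)=\tau(\mc{V})\mc{K}_X^{1/2}+O(z)$ recorded above, the leading term of $\text{Cap}(z)$ as $z\to0$ is the composite of multiplication by $\mc{K}_X^{1/2}$ with the tautological action $\tau\mapsto\tau(\mc{V})$. Since the tautological classes generate $K_T(X)_{loc}$, this leading operator is invertible over the field of rational functions; hence $\text{Cap}(z)$ is invertible as an operator-valued formal series, and by the Main Theorem applied to each $\text{Cap}^{(\tau)}$ both $\text{Cap}(z)$ and $\text{Cap}(z)^{-1}$ have matrix coefficients in $K_T(X)_{loc}(z^{\mbf{d}})_{\mbf{d}\in\text{Pic}(X)}$. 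This is the step where the tautological-class techniques, rather than the fixed-point basis, become essential: for a general quiver variety one cannot diagonalise in torus fixed points, and it is the surjectivity of the tautological action that guarantees the descendents $\tau$ provide a full, invertible fundamental solution.

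Then I would substitute into \eqref{plan-ratio}. The substitution $z^{\mbf{d}}\mapsto q^{\langle\mbf{d},\mc{L}\rangle}z^{\mbf{d}}$ preserves the field $K_T(X)_{loc}(z^{\mbf{d}})_{\mbf{d}\in\text{Pic}(X)}$, so $\text{Cap}(q^{\mc{L}}z)$ is again rational with coefficients in this field. The right-hand side of \eqref{plan-ratio} is then a product of three operator-valued rational functions, namely $\text{Cap}(q^{\mc{L}}z)$, the classical $\mc{O}(\mc{L})^{-1}$, and $\text{Cap}(z)^{-1}$; consequently $\mbf{M}_{\mc{L}}(z)$ has matrix coefficients in $K_T(X)_{loc}^{\otimes2}(z^{\mbf{d}})_{\mbf{d}\in\text{Pic}(X)}$, which is the assertion.

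The main obstacle I expect lies not in this final algebra but in establishing \eqref{plan-qde} in the assembled, tautological form: one must verify that the capped descendent vertices genuinely solve the quantum difference equation with the intrinsic operator $\mbf{M}_{\mc{L}}$ and that the classical factor $\mc{O}(\mc{L})$ is correctly isolated, so that the monodromic contributions — those responsible for the non-rationality of the capping operator $\Psi(z)$ and of the bare vertex — cancel in the ratio \eqref{plan-ratio}. This is the analogue for arbitrary quiver varieties of the wall-crossing and fusion description of the capped vertex announced in the abstract, and carrying it out requires tracking the degree shift and the descendent dependence through the gluing $\text{Cap}^{(\tau)}=\Psi(z)\,V^{(\tau)}(z)$ directly in terms of tautological classes rather than in the fixed-point basis.
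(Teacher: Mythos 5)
Your strategy --- assembling the capped descendent vertices into a square matrix $C(z)$ with columns $\text{Cap}^{(\tau_i)}(z)$ for a spanning family $\{\tau_i\}$, establishing the difference equation in the K\"ahler variable (which, modulo the placement of the inverse and of the argument shift, is exactly the paper's formula \ref{general-cap}, $\text{Cap}^{(\tau\otimes\mc{L})}(a,z)=\mbf{M}_{\mc{L}}(zp^{-\mc{L}})\text{Cap}^{(\tau)}(a,zp^{\mc{L}})$), and then solving $\mbf{M}_{\mc{L}}(zp^{-\mc{L}})=C'(z)\,C(zp^{\mc{L}})^{-1}$ where $C'(z)$ has columns $\text{Cap}^{(\tau_i\otimes\mc{L})}(z)$ --- is a legitimate route and genuinely different from the paper's. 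But as written it has a circularity gap. You invoke ``the Main Theorem'' for every member of a spanning family of descendents. In the paper, Theorem \ref{Main-theorem} in its full generality concerns Laurent descendents $\tau\in K_{T}(pt)[\cdots,x_{i1}^{\pm1},\cdots]^{\text{Sym}}$, and precisely that case is \emph{deduced from} Proposition \ref{rationality-of-qde} via \ref{general-cap}; before the present Proposition only the polynomial case, Proposition \ref{Main-prop-1}, is available. Moreover the generation statement you appeal to (Kirwan surjectivity, \cite{MN18}) produces Laurent tautological classes, not polynomial ones, so the spanning family your argument naturally requires is exactly of the type whose rationality is not yet known at this point. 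As it stands, the proof assumes a statement whose proof in turn requires the Proposition being proved.

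The gap is repairable, and the repair is what would make your route a genuine alternative. You need (i) a spanning family consisting of \emph{polynomial} descendents: this exists because the $F$-span $P$ of $\{\sigma(\mc{V})\,:\,\sigma\ \text{polynomial}\}$ in $K_{T}(X)_{loc}$, $F=\text{Frac}(K_{T}(pt))$, is stable under multiplication by each $\det(\mc{V}_i)$; since $\det(\mc{V}_i)$ is a unit of the ring $K_{T}(X)_{loc}$ and $P$ is finite dimensional, $P$ is then also stable under $\det(\mc{V}_i)^{-1}$, hence contains all Laurent tautological classes, which span by Kirwan surjectivity and localisation; (ii) the difference equation kept inside polynomial descendents, i.e. $\mc{L}$ a product of nonnegative powers of the $\det(\mc{V}_i)$ so that $\tau_i\otimes\mc{L}$ stays polynomial, with arbitrary $\mc{L}\in\text{Pic}(X)$ recovered afterwards from $\mbf{M}_{\mc{L}}=\mc{L}+O(z)$ (invertibility as a formal series) and the composition law for the operators $\mc{A}_{\mc{L}}$. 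With these amendments your argument runs on only Proposition \ref{Main-prop-1} and is in fact more self-contained than the paper's proof: the paper instead works on an auxiliary variety with large framing, where the capped vertex is classical by large framing vanishing, takes the $a\rightarrow0$ limit to factor through the fusion operator $Y^{(\mbf{w}_1),(\mbf{w}_2)}(z)$, reads off rationality of $\mbf{M}_{\mc{L}}$ acting on the small factor $K_{T}(M(\mbf{v}_2,\mbf{w}_2))$, and finally transfers to arbitrary quiver varieties through the component-independence of the constant in the identity \ref{qde-transform} from \cite{OS22}. Your route would avoid that last transfer step entirely, at the price of the spanning and invertibility analysis above, which you must actually supply.
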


Actually the rationality of the quantum difference operator is also important in the proof of the rationality of the capped vertex functions of arbitrary descedents. We will give a brief description later in the next subsection.

Another important application of the computation is that one can use the argument to give a GIT wall-crossing formula for the capped vertex function. Namely, given $\chi^{(\tau)}_{\bm{\theta}}(z)=\chi(\text{Cap}^{(\tau)}_{\bm{\theta}})(z)$, one can use the fusion operator $H^{(\mbf{w}_1),(\mbf{w}_2)}(z)$ defined in \ref{fusion-operator} to deduce the corresponding wall-crossing formula:

\begin{thm}{(See Theorem \ref{wall-crossing formula})}
For two stability condition $\bm{\theta}$ and $\bm{\theta}'$, the GIT wall-crossing for the Euler characteristic of the capped vertex function with descendents $\tau\in K_{T}(pt)[\cdots,x_{i1}^{\pm1},\cdots,x_{in_i}^{\pm1},\cdots]^{\text{Sym}}$ is given by:
\begin{equation}
\begin{aligned}
&\chi^{(\tau)}_{\bm{\theta}}(zp^{\mc{L}})-\chi^{(\tau)}_{\bm{\theta}'}(zp^{\mc{L}})\\
=&H_{\bm{\theta},\bm{\theta}'}^{(\tau)}+\frac{1}{|W|}\int_{\chi_{\bm{\theta}'}}\frac{\Delta_{\text{Weyl}}(s)}{\wedge^*(T^*\text{Rep}(\mbf{v},\mbf{w}))}q^{-\Omega}\times\\
&\times(\mbf{M}_{\mc{L}}^{\bm{\theta}}(zp^{-\mc{L}})^{-1}\mc{L}_{\bm{\theta}}H^{(\mbf{w}_1),(\mbf{w}_2)}_{\bm{\theta}}(zq^{-\frac{\Delta\mbf{w}_2}{2}})^{-1}\mc{L}^{-1}_{\bm{\theta}}-\mbf{M}_{\mc{L}}^{\bm{\theta}'}(zp^{-\mc{L}})^{-1}\mc{L}_{\bm{\theta}'}H^{(\mbf{w}_1),(\mbf{w}_2)}_{\bm{\theta}'}(zq^{-\frac{\Delta\mbf{w}_2}{2}})^{-1}\mc{L}^{-1}_{\bm{\theta}'})\times\\
&\times((\tau(\mc{V})\otimes\mc{L})\otimes1)\mc{K}^{1/2})d_{Haar}s
\end{aligned}
\end{equation}

Here $H^{(\tau)}_{\bm{\theta},\bm{\theta}'}(z)$ is the analytic continuation of $\chi^{(\tau)}_{\bm{\theta}}(z)$ via contour deformation from $\chi_{\bm{\theta}}$ to $\chi_{\bm{\theta}'}$. $\mc{L}$ is some ample line bundle such that it makes $\tau\otimes\mc{L}\in K_{T}(pt)[\cdots,x_{i1},\cdots,x_{in_i},\cdots]^{\text{Sym}}$.
\end{thm}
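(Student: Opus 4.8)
The plan is to realise both Euler characteristics as abelianized contour integrals over the Cartan torus $S\subset G$ and to compare them by deforming the contour across the wall separating $\bm{\theta}$ and $\bm{\theta}'$. Concretely, for a $K$-theory class on the GIT quotient $X=\text{Rep}(\mbf{v},\mbf{w})//_{\bm{\theta}}G$ the Euler characteristic is computed by the Weyl integration formula
\[
\chi^{(\tau)}_{\bm{\theta}}(z)=\frac{1}{|W|}\int_{\chi_{\bm{\theta}}}\frac{\Delta_{\text{Weyl}}(s)}{\wedge^*(T^*\text{Rep}(\mbf{v},\mbf{w}))}\,q^{-\Omega}\,\text{Cap}^{(\tau)}_{\bm{\theta}}(s,z)\,d_{Haar}s,
\]
where the cycle $\chi_{\bm{\theta}}$ is the torus contour singled out by the ample cone attached to $\bm{\theta}$. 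With this presentation the two characteristics $\chi^{(\tau)}_{\bm{\theta}}$ and $\chi^{(\tau)}_{\bm{\theta}'}$ differ only through the choice of contour, once the integrand is known to continue across the wall.

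First I would invoke the rationality theorem (Theorem \ref{Main-theorem}) together with the rationality of the quantum difference operator (Proposition \ref{rationality-of-qde}). This is the step that makes the whole scheme well-posed: it guarantees that the matrix coefficients of $\text{Cap}^{(\tau)}_{\bm{\theta}}$ are genuine rational functions of the Kahler variables, so the integrand is single-valued and meromorphic, and the deformation from $\chi_{\bm{\theta}}$ to $\chi_{\bm{\theta}'}$ collects a well-defined, finite collection of residues. By definition $H^{(\tau)}_{\bm{\theta},\bm{\theta}'}$ is the value obtained by integrating the $\bm{\theta}$-integrand over the deformed cycle $\chi_{\bm{\theta}'}$, i.e. the naive analytic continuation, so that $\chi^{(\tau)}_{\bm{\theta}}-\chi^{(\tau)}_{\bm{\theta}'}$ decomposes as $H^{(\tau)}_{\bm{\theta},\bm{\theta}'}$ plus the residue contribution gathered on the way.

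Next I would identify that residue contribution as the bracketed integral. Here the essential geometric inputs are the large-framing vanishing theorem quoted above from \cite{O15} and the framing splitting $\mbf{w}=\mbf{w}_1+\mbf{w}_2$: in the stable regime the capped vertex collapses to its classical term $\tau(\mc{V})\mc{K}_X^{1/2}$, and the full $z$-dependence is transported by the fusion operator $H^{(\mbf{w}_1),(\mbf{w}_2)}_{\bm{\theta}}(z)$ defined in \ref{fusion-operator}. Shifting the Kahler variable to $zp^{\mc{L}}$ and using the quantum difference equation to absorb the shift into $\mbf{M}^{\bm{\theta}}_{\mc{L}}(zp^{-\mc{L}})^{-1}$, together with the fusion shift $zq^{-\Delta\mbf{w}_2/2}$ inside $H^{(\mbf{w}_1),(\mbf{w}_2)}_{\bm{\theta}}$ conjugated by $\mc{L}_{\bm{\theta}}$, I would rewrite the residue integrand over $\chi_{\bm{\theta}'}$ as the operator $\mbf{M}^{\bm{\theta}}_{\mc{L}}(zp^{-\mc{L}})^{-1}\mc{L}_{\bm{\theta}}H^{(\mbf{w}_1),(\mbf{w}_2)}_{\bm{\theta}}(zq^{-\Delta\mbf{w}_2/2})^{-1}\mc{L}^{-1}_{\bm{\theta}}$ acting on $((\tau(\mc{V})\otimes\mc{L})\otimes1)\mc{K}^{1/2}$. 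Running the identical reduction for $\bm{\theta}'$ produces the subtracted operator, and the two together give the stated bracketed difference.

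The main obstacle I expect is the bookkeeping of the residues. One must verify that the poles crossed when moving the contour are exactly the poles of the quantum difference operator $\mbf{M}_{\mc{L}}$ and of the fusion operator, and not spurious poles of the Weyl denominator $\Delta_{\text{Weyl}}$ or of the virtual factor $\wedge^*(T^*\text{Rep}(\mbf{v},\mbf{w}))$, and then that the residue of the conjugated operator reproduces the fusion normalisation with the correct $p$- and $q$-shifts. This amounts to matching the pole structure of the rational quantum difference operator against the wall-crossing geometry of the two quotients, and it is precisely where Proposition \ref{rationality-of-qde} is used most essentially: to locate the poles and to guarantee that the continuation $H^{(\tau)}_{\bm{\theta},\bm{\theta}'}$ together with the residue sum recovers $\chi^{(\tau)}_{\bm{\theta}}-\chi^{(\tau)}_{\bm{\theta}'}$.
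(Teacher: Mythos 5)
Your proposal assembles the right ingredients --- the Weyl integration formula of \cite{AFO18}, rationality of the capped vertex and of $\mbf{M}_{\mc{L}}$, large framing vanishing, and (in your third paragraph) the correct identity
$\text{Cap}^{(\tau)}_{\bm{\theta}}(zp^{\mc{L}})\otimes 1=\mbf{M}^{\bm{\theta}}_{\mc{L}}(zp^{-\mc{L}})^{-1}\mc{L}_{\bm{\theta}}q^{-\Omega}H^{(\mbf{w}_1),(\mbf{w}_2)}_{\bm{\theta}}(zq^{-\frac{\Delta\mbf{w}_2}{2}})^{-1}\mc{L}^{-1}_{\bm{\theta}}\bigl(((\tau(\mc{V})\otimes\mc{L})\otimes1)\mc{K}^{1/2}\bigr)$
--- but the logical assembly is wrong at its central step. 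Your premise that the two Euler characteristics ``differ only through the choice of contour'' is false: the capped vertex function is not the Kirwan image of a single $\bm{\theta}$-independent class, because the quasimap moduli space, and hence every coefficient of $z^{\mbf{d}}$, depends on the stability condition. What is common to the two sides (and only under the hypotheses, absent from your write-up, that $\mbf{w}$ is large for \emph{both} $\bm{\theta}$ and $\bm{\theta}'$ and that the two stability conditions are close) is merely the class $((\tau(\mc{V})\otimes\mc{L})\otimes1)\mc{K}^{1/2}$ on which the operators act. Consequently the bracketed integral in the theorem is not a residue contribution at all: it is the difference of the two \emph{integrands} $I_{\bm{\theta}}-I_{\bm{\theta}'}$, built respectively from $(\mbf{M}^{\bm{\theta}}_{\mc{L}},H_{\bm{\theta}},\mc{L}_{\bm{\theta}})$ and $(\mbf{M}^{\bm{\theta}'}_{\mc{L}},H_{\bm{\theta}'},\mc{L}_{\bm{\theta}'})$, integrated over the one common contour $\chi_{\bm{\theta}'}$. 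The actual proof is the elementary split
$\int_{\chi_{\bm{\theta}}}I_{\bm{\theta}}-\int_{\chi_{\bm{\theta}'}}I_{\bm{\theta}'}=\bigl[\int_{\chi_{\bm{\theta}}}I_{\bm{\theta}}-\int_{\chi_{\bm{\theta}'}}I_{\bm{\theta}}\bigr]+\int_{\chi_{\bm{\theta}'}}\bigl(I_{\bm{\theta}}-I_{\bm{\theta}'}\bigr)$,
where the first bracket is $H^{(\tau)}_{\bm{\theta},\bm{\theta}'}$. No residue computation can produce the second term: residues of the $\bm{\theta}$-integrand can never generate the $\bm{\theta}'$-operators $\mbf{M}^{\bm{\theta}'}_{\mc{L}}$, $H_{\bm{\theta}'}$, $\mc{L}_{\bm{\theta}'}$ that appear in it, so your ``main obstacle'' paragraph --- matching crossed poles against the wall-crossing geometry --- is chasing a step that does not exist in the argument.

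Relatedly, your definition of $H^{(\tau)}_{\bm{\theta},\bm{\theta}'}$ as ``the value obtained by integrating the $\bm{\theta}$-integrand over the deformed cycle'' is inconsistent with your own decomposition: with that definition, $H$ plus your residue term double-counts $\int_{\chi_{\bm{\theta}'}}I_{\bm{\theta}}$, and the claimed identity fails; it would also force $H^{(\tau)}_{\bm{\theta},\bm{\theta}'}\neq 0$ even for infinitesimally close contours, contradicting the observation in the paper that $H^{(\tau)}_{\bm{\theta},\bm{\theta}'}=0$ when no poles are crossed. The quantity in the theorem is the deformation \emph{discrepancy} $\int_{\chi_{\bm{\theta}}}I_{\bm{\theta}}-\int_{\chi_{\bm{\theta}'}}I_{\bm{\theta}}$, i.e.\ exactly the sum of residues picked up in moving the contour, and nothing else. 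Once $H^{(\tau)}_{\bm{\theta},\bm{\theta}'}$ is defined this way, and both integrands are identified via the reduction you already carry out correctly in your third paragraph, the theorem follows immediately from the split above, with no further pole bookkeeping.
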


In many cases, $H^{(\mbf{w}_1),(\mbf{w}_2)}_{\bm{\theta}}(z)$ can have an explicit formula from the representation theory of the quantum affine algebras, i.e. \cite{S16} for $\theta=1$ of the Jordan quiver. While for the generic stability condition, it is still unknown whether the corresponding Maulik-Okounkov quantum affine algebra has the corresponding preprojective $K$-theoretic Hall algebra description or not. This theorem would turn the problem of the computation for the GIT wall-crossing to the construction of the algebraic description for the MO quantum affine algebra of different stability conditions. We will put this as the further study.

\subsection{Strategy of the proof}

The proof is based on the strategy given by Smirnov in \cite{S16}. There the main technique is to transform the problem to the quantum difference equations and wKZ equations. In the case of the instanton moduli space or Jordan quiver variety, the corresponding quantum difference equation can be constructed on the quantum toroidal algebra $U_{q,t}(\hat{\hat{\mf{gl}}}_1)$ as explained in \cite{Z24}.

The issue for the general case is the ambiguity for the wall $R$-matrix in the wKZ equation (See \ref{wKZ-equation}), in the case of the Jordan quiver, the corresponding $R$-matrix $R_0^-$ contains only one wall $R_{0}^-$, and this ensures the rationality of the fusion operator $H^{(r_1),(r_2)}(z)$. While in the general case, the matrix $R_{0}^{-}=\prod^{\leftarrow}_{w}R_{w}^{-}$ actually contains infinitely many walls, and to deal with the issue, we can show that when restricting to the component $K_{T}(M(\mbf{v},\mbf{w})^A)$, we can reduce the infinite product to the finite product, and thus it guarantees the rationality of the fusion operator $H^{(\mbf{w}_1),(\mbf{w}_2)}(z)$. For details see the proof of the Proposition \ref{rationality-for-wKZ}.

For the block diagonal part of the capping operator and the index limit of the vertex function, we use the standard techniques of the virtual localisation to prove this fact. This will give the rationality of the fusion operator.

Now since we have the rationality of the fusion operator $H^{(\mbf{w}_1),(\mbf{w}_2)}(z)$, combining the large framing vanishing for the capped vertex $\text{Cap}^{(\tau)}(z)$ and the factorisation property for the bare vertex and the factorisation for the capped vertex:
\begin{align}
\text{Cap}^{(\tau)}(a,z)=\Psi(a,z)\text{Vertex}^{(\tau)}(a,z)
\end{align}

One can deduce the main theorem for $\tau\in K_{T}(pt)[\cdots,x_{i1},\cdots,x_{in_i},\cdots]^{\text{Sym}}$. For the arbitrary $\tau$, we first prove the rationality of the quantum difference operator $\mbf{M}_{\mc{L}}(z)$, and use the translation symmetry of the vertex function and the quantum difference equation to deduce the theorem for arbitrary $\tau$.

For details of the proof see section \ref{section-6}.

\subsection{Outline of the paper}
The paper is organised as follows: In section two we introduce basic notions of the Nakajima quiver variety and the stable envelope over its equivariant $K$-theory. In section three we introduce some basic notions of the quasimap moduli space and the vertex function with descendents. We also prove the factorisation property for the vertex function.

In section four we introduce the nonabelian stable envelope over the Nakajima quiver variety. In section five we introduce the capped and capping operators, and also we introduce the fusion operators and the corresponding wKZ equation. We also introduce the "adjoint type" fusion operator as a digression.

In section six we prove the main theorem of the paper, and we prove the GIT wall-crossing formula for the capped vertex function.

\subsection{Acknowledgements}
The author would like to thank Yalong Cao, Andrei Smirnov, Hunter Dinkins, Yehao Zhou and Zijun Zhou for their helpful discussions. The author is supported by the international collaboration grant BMSTC and ACZSP(Grant no. Z221100002722017).

\section{\textbf{Quiver varieties and stable envelopes}}
\subsection{Definition}
Let $Q=(I,E)$ be a quiver with finite node set $I$. We allow the quiver $Q$ to have loops and multiple edges. We consider the following type of the double framed quiver:
\begin{align}
\overline{Q}^{fr}=(I\cup\overline{I}, E\cup I\cup\overline{E}\cup\overline{I})
\end{align} 
its corresponding representation space can be written as:
\begin{align}\label{quiver-representation}
\text{Rep}_{\overline{Q}^{fr}}(\mbf{v},\mbf{w})=T^*\text{Rep}_{Q^{fr}}(\mbf{v},\mbf{w})=\bigoplus_{ij\in E}\text{Hom}(V_i,V_j)\oplus\text{Hom}(V_j,V_i)\oplus\bigoplus_{i\in I}\text{Hom}(V_i,W_i)\oplus\text{Hom}(W_i,V_i)
\end{align}

This is a linear space with a natural symplectic form $\omega$. This symplectic form is preserved by the action of
\begin{align}
G_{\mbf{v}}=\prod_{i\in I}GL(V_i),\qquad G_{\mbf{w}}=\prod_{i\in I}GL(W_i)
\end{align}

It also defines an action of the group $G_{edge}:=\prod_{i\in I}Sp(2q_{ii})\times\prod_{ij\in E,i\neq j}GL(q_{ij})\times\mbb{C}_{q}^*$.

The Hamiltonian action given by $G_{\mbf{v}}$ gives the moment map:
\begin{align}
\mu:T^*\text{Rep}_{Q^{fr}}(\mbf{v},\mbf{w})\rightarrow\mf{g}_{\mbf{v}}^*
\end{align}

If we choose the corresponding algebraic group
\begin{align}
T_{\mbf{w}}=\mbb{C}^*\times\prod_{e\in E}\mbb{C}^*\times\prod_{i\in I}GL(W_i)\subset G_{edge}\times\prod_{i\in I}GL(W_i)
\end{align}

It acts on Nakajima quiver varieties as follows:
\begin{align}
(q,t_e,U_i)_{e\in E,i\in I}\cdot(X_e,Y_e,A_i,B_i)_{e\in E,i\in I}=(t_e^{-1}X_e,q^{-1}t_eY_e,A_iU_i^{-1},q^{-1}U_iB_i)_{e\in E,i\in I}
\end{align}

Given $\bm{\theta}\in\mbb{Z}^I$, it defines a character of $G_{\mbf{v}}$ by the convention
\begin{align}
(g_i)\mapsto\prod_{i\in I}(\text{det}(g_i))^{\theta_i}\in\mbb{C}^*
\end{align}

A Nakajima quiver variety with the stability condition $\bm{\theta}$ is defined as:
\begin{align}
M_{\bm{\theta},\zeta}(\mbf{v},\mbf{w})=\mu^{-1}(\zeta)//^{\bm{\theta}-ss}G_{\mbf{v}}
\end{align}

\begin{defn}
We say that $(\bm{\theta},\zeta)$ is \textbf{generic} if it is in the complement of the following hyperplane:
\begin{align}
\alpha_i\cdot\bm{\theta}=\alpha_i\cdot\zeta=0
\end{align}
for some collection of vectors $\{\alpha_i\}\in\mbb{N}^I$ stated in \cite{Na94}.
\end{defn}

For generic $\bm{\theta}\in\mbb{Z}^I$, it is a quasiprojective smooth variety. Here in this paper we don't give speicific choice of $\bm{\theta}$, we just require that $M_{\bm{\theta},\zeta}(\mbf{v},\mbf{w})$ is smooth. Furthermore we choose $\zeta=0$.

A good thing for the Nakajima quiver variety is that it satisfies the polarisation:
\begin{align}
TM_{\theta,0}(\mbf{v},\mbf{w})=T^{1/2}+q^{-1}\otimes(T^{1/2})^{\vee}
\end{align}

We will give several choices of the polarisation in the paper, for the detail see \ref{polarisation-0} and \ref{polarisaion-large-framing}.

If we fix the stability condition $\bm{\theta}$ for the quiver variety $M_{\bm{\theta},0}(\mbf{v},\mbf{w})$, we define $C_{ample}\in\text{Hom}_{Grp}(G_{\mbf{v}},\mbb{C}^*)\otimes\mbb{R}\cong\mbb{R}^{I}$ the ample cone to be the closure of $\mbb{R}_{>0}\bm{\theta}$ with the point $\bm{\theta}'$ such that the corresponding GIT quotient is isomorphic to the one of $\bm{\theta}$.

In this paper we will often use the notation $M(\mbf{v},\mbf{w})$ to stand for the quiver variety $M_{\bm{\theta},0}(\mbf{v},\mbf{w})$ with the chosen stability condition. 

\subsection{Equivariant $K$-theory of Nakajima quiver varieties}
We abbreviate $T_{\mbf{w}}=T$, so the equivariant $K$-theory $K_{T}(X)$ is a module over $K_{T}(pt)$ such that:
\begin{align}
K_{T}(pt)=\mbb{Z}[q^{\pm1},t_{e}^{\pm1},u_{ia}^{\pm1}]_{e\in E,i\in I,1\leq a\leq w_i}^{Sym}
\end{align}

The generators for the equivariant $K$-theory $K_{T}(M(\mbf{v},\mbf{w}))$ of Nakajima quiver varieties can be described via the Kirwan surjectivity \cite{MN18}.

Since $M_{\bm{\theta},0}(\mbf{v},\mbf{w})$ is a GIT quotient, we have a stack embedding:
\begin{align}
i:M_{\bm{\theta},\zeta}(\mbf{v},\mbf{w})\hookrightarrow[\mu^{-1}(0)/G_{\mbf{v}}]
\end{align}
This induces a pullback map on the equivariant $K$-theory:
\begin{align}
i^*:K_{T}([\mu^{-1}(0)/G_{\mbf{v}}])\rightarrow K_{T}(M_{\bm{\theta},\zeta}(\mbf{v},\mbf{w}))
\end{align}

It has been proved by McGerty and Nevins \cite{MN18} that this map is surjective. Since $K_{T}([\mu^{-1}(0)/G_{\mbf{v}}])\cong K_{T\times G_{\mbf{v}}}(pt)$, the equivariant $K$-theory $K_{T}([\mu^{-1}(0)/G_{\mbf{v}}])$ of Nakajima quiver varieties is generated by $K_{T}(pt)$ and its tautological bundles.

Let $(-,-)$ be a bilinear form on $K_{T}(X)$ defined by the following formula:
\begin{align}\label{twisted-bilinear-form}
(\mc{F},\mc{G})=\chi(\mc{F}\otimes\mc{G}\otimes\mc{K}_{X}^{-1/2})
\end{align}

Here $\mc{K}_{X}$ is the canonical bundle and $\chi=p_*:K_{T}(X)\rightarrow K_{T}(pt)$ is the equivariant Euler characteristic via the canonical map $p:X\rightarrow pt$. Since Nakajima quiver varieties admits the polarisation for its tangnet bundle, it implies that $\mc{K}_{X}$ admits its square root $\mc{K}_{X}^{1/2}$.

\subsection{K-theoretic stable envelope and geometric $R$-matrix}
We first review the definition of the $K$-theoretic stable envelopes for the quiver varieties.

Given $X:=M(\mbf{v},\mbf{w})$ a Nakajima quiver variety and $T_{\mbf{w}}$ acting on $M(\mbf{v},\mbf{w})$. Given a subtorus $A\subset T_{\mbf{w}}$ in the kernel of $q$. Choose a chamber $\mc{C}$ of the torus $A$ and a fractional line bundle $s\in\text{Pic}(X)\otimes\mbb{Q}$. By definition, the $K$-theoretic stable envelope is a unique $K$-theory class
\begin{align}
\text{Stab}_{\mc{C}}^{s}\subset K_{G}(X\times X^A)
\end{align}

supported on $\text{Attr}_{\mc{C}}^f$, such that it induces the morphism
\begin{align}
\text{Stab}_{\mc{C},s}:K_{G}(X^A)\rightarrow K_{G}(X)
\end{align}

such that if we write $X^T=\sqcup_{\alpha}F_{\alpha}$ into components:
\begin{itemize}
	\item The diagonal term is given by the structure sheaf of the attractive space:
	\begin{align}
	\text{Stab}_{\mc{C},s}|_{F_{\alpha}\times F_{\alpha}}=(-1)^{\text{rk }T_{>0}^{1/2}}(\frac{\text{det}(\mc{N}_{-})}{\text{det}T_{\neq0}^{1/2}})^{1/2}\otimes\mc{O}_{\text{Attr}}|_{F_{\alpha}\times F_{\alpha}}
	\end{align}

	\item The $T$-degree of the stable envelope has the bounding condition for $F_{\beta}\leq F_{\alpha}$:
	\begin{align}
	\text{deg}_{T}\text{Stab}_{\mc{C},s}|_{F_{\beta}\times F_{\alpha}}+\text{deg}_{T}s|_{F_{\alpha}}\subset\text{deg}_{T}\text{Stab}_{\mc{C},s}|_{F_{\beta}\times F_{\beta}}+\text{deg}_{T}s|_{F_{\beta}}
	\end{align}

	We require that for $F_{\beta}<F_{\alpha}$, the inclusion $\subset$ is strict.
\end{itemize}

Here note that the partial ordering on the components of the fixed point set coincides with "ample partial ordering". Given $\bm{\theta}\in\text{Pic}(M_{\bm{\theta},0}(\mbf{v},\mbf{w}))$ the image of the corresponding stability condition in the Picard group, and $\sigma\in\mc{C}$ a character of $A$ then:
\begin{align}
F_2\leq F_1\leftrightarrow\langle\bm{\theta}_{F_1},\sigma\rangle\leq\langle\bm{\theta}_{F_2},\sigma\rangle
\end{align}

Here given $F=M(\mbf{v}_1,\mbf{w}_1)\times M(\mbf{v}_2,\mbf{w}_2)$, the function $\langle\bm{\theta}_{F},\sigma\rangle$ is defined as:
\begin{align}
\langle\bm{\theta}_{F},\sigma\rangle=\langle\mbf{v}_1,\bm{\theta}_1\rangle\sigma_1+\langle\mbf{v}_2,\bm{\theta}_2\rangle\sigma_2
\end{align}
Here $\sigma_1,\sigma_2\in\mc{C}$ is chosen such that $\sigma_1-\sigma_2>0$.

The uniqueness and existence of the $K$-theoretic stable envelope was given in \cite{AO21} and \cite{O21}. In \cite{AO21}, the consturction is given by the abelinization of the quiver varieties. In \cite{O21}, the construction is given by the stratification of the complement of the attracting set, which is much more general.

Here we choose the following polarisation for point $\bm{\theta}\in C_{ample}$ in the ample cone given by the stability conditions of the quiver varieties:
\begin{equation}\label{polarisation-0}
T^{1/2}M(\mbf{v},\mbf{w})=\sum_{e=ij\in E}\frac{\mc{V}_j}{t_e\mc{V}_i}-\sum_{i\in I}\frac{\mc{V}_i}{\mc{V}_i}+\sum_{i\in I,\theta_i>0}\frac{\mc{V}_i}{\mc{W}_i}+\sum_{i\in I,\theta_i<0}\frac{\mc{W}_i}{q\mc{V}_i}
\end{equation}

\begin{equation}\label{polarisation-1}
\begin{aligned}
\mc{N}^+_{M(\mbf{v}_1,\mbf{w}_1)\times M(\mbf{v}_2,\mbf{w}_2)}=&\sum_{ij\in E}(\frac{\mc{V}_j'}{t_e\mc{V}_i''}+\frac{t_e\mc{V}_i'}{q\mc{V}_j''})-\sum_{i\in I}(1+q^{-1})(\frac{\mc{V}_i'}{\mc{V}_i''})+\sum_{i\in I,\theta_i>0}(\frac{\mc{V}_i'}{\mc{W}_i''}+\frac{\mc{W}_i'}{q\mc{V}_i''})+\\
&+\sum_{i\in I,\theta_i<0}(\frac{\mc{V}_i''}{\mc{W}_i'}+\frac{\mc{W}_i''}{q\mc{V}_i'})
\end{aligned}
\end{equation}
\begin{align}\label{polarisation-2}
T^{1/2}_{-}=\sum_{ij\in E}\frac{\mc{V}_j''}{t_e\mc{V}_i'}-\sum_{i\in I}\frac{\mc{V}_i''}{\mc{V}_i'}+\sum_{i\in I,\theta_i>0}\frac{\mc{V}_i''}{W_{i}'}+\sum_{i\in I,\theta_i<0}q^{-1}\frac{\mc{W}_i'}{\mc{V}_i''}
\end{align}

We will often use the following notation:
\begin{equation}\label{difference}
\Delta\mbf{w}:=\mbf{w}^{>0}-\mbf{w}^{<0}=\sum_{i\in I,\theta_i>0}\text{dim}(W_i)\mbf{e}_i-\sum_{i\in I,\theta_i<0}\text{dim}(W_i)\mbf{e}_i
\end{equation}

The stable envelope has the factorisation property called the triangle lemma \cite{O15}. Given a subtorus $A'\subset A$ with the corresponding chamber $\mc{C}_{A'},\mc{C}_{A}$, we have the following diagram commute:
\begin{equation}\label{triangle-lemma}
\begin{tikzcd}
K_{G}(X^A)\arrow[rr,"\text{Stab}_{\mc{C}_A,s}"]\arrow[dr,"\text{Stab}_{\mc{C}_{A/A'},s}"]&&K_{G}(X)\\
&K_{G}(X^{A'})\arrow[ur,"\text{Stab}_{\mc{C}_{A'},s}"]&
\end{tikzcd}
\end{equation}

In this paper, we will always choose $s$ infinitesiamlly close to $0$ on the ample cone $C_{ample}$. 

Let us focus on the case of the quiver varieties $M(\mbf{v},\mbf{w})$. Choose the framing torus $\sigma:\mbb{C}^*\rightarrow A_{\mbf{w}}\subset G_{\mbf{w}}$ such that:
\begin{align}
\mbf{w}=a_1\mbf{w}_1+\cdots+a_k\mbf{w}_k
\end{align}

In this case the fixed point is given by:
\begin{align}
M(\mbf{v},\mbf{w})^{\sigma}=\bigsqcup_{\mbf{v}_1+\cdots+\mbf{v}_k=\mbf{v}}M(\mbf{v}_1,\mbf{w}_1)\times\cdots\times M(\mbf{v}_k,\mbf{w}_k)
\end{align}

Denote $K(\mbf{w}):=\bigoplus_{\mbf{v}}K_{G_{\mbf{w}}}(M(\mbf{v},\mbf{w}))$, it is easy to see that the stable envelope $\text{Stab}_{s}$ gives the map:
\begin{align}
\text{Stab}_{\mc{C},s}:K(\mbf{w}_1)\otimes\cdots\otimes K(\mbf{w}_k)\rightarrow K(\mbf{w}_1+\cdots+\mbf{w}_k)
\end{align}

Using the $K$-theoretic stable envelope, we can define the geometric $R$-matrix as:
\begin{align}
\mc{R}^{s}_{\mc{C}}:=\text{Stab}_{-\mc{C},s}^{-1}\circ\text{Stab}_{\mc{C},s}:K(\mbf{w}_1)\otimes\cdots\otimes K(\mbf{w}_k)\rightarrow K(\mbf{w}_1)\otimes\cdots\otimes K(\mbf{w}_k)
\end{align}

Written in the component of the weight subspaces, the geometric $R$-matrix can be written as:
\begin{equation}
\begin{aligned}
&\mc{R}^{s}_{\mc{C}}:=\text{Stab}_{-\mc{C},s}^{-1}\circ\text{Stab}_{\mc{C},s}:\bigoplus_{\mbf{v}_1+\cdots+\mbf{v}_k=\mbf{v}}K(\mbf{v}_1,\mbf{w}_1)\otimes\cdots\otimes K(\mbf{v}_k,\mbf{w}_k)\\
&\rightarrow \bigoplus_{\mbf{v}_1+\cdots+\mbf{v}_k=\mbf{v}}K(\mbf{v}_1,\mbf{w}_1)\otimes\cdots\otimes K(\mbf{v}_k,\mbf{w}_k)
\end{aligned}
\end{equation}

From the triangle diagram \ref{triangle-lemma} of the stable envelope, we can further factorise the geometric $R$-matrix into the smaller parts:
\begin{align}\label{abstract-decomposition}
\mc{R}^s_{\mc{C}}=\prod_{1\leq i<j\leq k}\mc{R}^s_{\mc{C}_{ij}}(\frac{a_i}{a_j}),\qquad \mc{R}^s_{\mc{C}_{ij}}(\frac{a_i}{a_j}):K(\mbf{w}_i)\otimes K(\mbf{w}_j)\rightarrow K(\mbf{w}_i)\otimes K(\mbf{w}_j)
\end{align}

Each $\mc{R}^{s}_{\mc{C}_{ij}}(u)$ satisfies the trigonometric Yang-Baxter equation with the spectral parametres.

In the language of the integrable model, we denote $V_{i}(u_i)$ as the modules of type $K(\mbf{w})$ defined above with the spectral parametre $u_i$. The formula \ref{abstract-decomposition} means that:
\begin{align}
\mc{R}^s_{\bigotimes^{\leftarrow}_{i\in I}V_i(a_i),\bigotimes^{\leftarrow}_{j\in J}V_j(a_j)}=\prod_{i\in I}^{\rightarrow}\prod_{j\in J}^{\leftarrow}\mc{R}^{s}_{V_i,V_j}(\frac{a_i}{a_j}):
\end{align}

The $K$-theoretical stable envelope $\text{Stab}_{\mc{C},s}$ is locally constant on $s$ if and only if $s$ crosses the following hyperplanes.
\begin{align}
w=\{s\in\text{Pic}(X)\otimes\mbb{R}|(s,\alpha)+n=0,\forall\alpha\in\text{Pic}(X)\}\subset\text{Pic}(X)\otimes\mbb{R}
\end{align}

Now fix the slope $\mbf{m}$ and the cocharacter $\sigma:\mbb{C}^*\rightarrow A_{\mbf{w}}$. We choose an ample line bundle $\mc{L}\in\text{Pic}(X)$ with $X=M(\mbf{v},\mbf{w}_1+\mbf{w}_2)$ and a suitable small number $\epsilon$ such that $\mbf{m}$ and $\mbf{m}+\epsilon\mc{L}$ are separated by just one wall $w$, we define the \textbf{wall $R$-matrices} as:
\begin{align}
R_{w}^{\pm}:=\text{Stab}_{\sigma,\mbf{m}+\epsilon\mc{L}}^{-1}\circ\text{Stab}_{\sigma,\mbf{m}}:\bigoplus_{\mbf{v}_1+\mbf{v}_2=\mbf{v}}K(\mbf{v}_1,\mbf{w}_1)\otimes K(\mbf{v}_2,\mbf{w}_2)\rightarrow\bigoplus_{\mbf{v}_1+\mbf{v}_2=\mbf{v}}K(\mbf{v}_1,\mbf{w}_1)\otimes K(\mbf{v}_2,\mbf{w}_2)
\end{align}

It is an integral $K$-theory class in $K_{T}(X^A\times X^A)$. Note that the choice of $\epsilon$ depends on $M(\mbf{v},\mbf{w}_1+\mbf{w}_2)$ just to make sure that there is only one wall between $\mbf{m}$ and $\mbf{m}+\epsilon\mc{L}$ corresponding to $w$. 

We use the notation that given $A=A_{\alpha}\in\text{End}(K(\mbf{w}_1)\otimes K(\mbf{w}_2))$, $A_{\alpha}:K_{T}(M(\mbf{v}_1,\mbf{w}_1))\otimes K_{T}(M(\mbf{v}_2,\mbf{w}_2))\rightarrow K_{T}(M(\mbf{v}_1-\alpha,\mbf{w}_1))\otimes K_{T}(M(\mbf{v}_2+\alpha,\mbf{w}_2))$.
By definition it is easy to see that $R_{w}^{+}=1+\sum_{\alpha}R_{w,\alpha}^{+}$ is upper-triangular in the sense that $\bm{\theta}\cdot\alpha>0$, and $R_{w}^{-}=1+\sum_{\alpha}R_{w,\alpha}^{-}$ is lower triangular in the sense that $\bm{\theta}\cdot\alpha<0$.

Now choose the torus $T$ such that $M(\mbf{v},\mbf{w})^T=\bigoplus_{\mbf{v}_1+\mbf{v}_2=\mbf{v}} M(\mbf{v}_1,\mbf{w}_1)\times M(\mbf{v}_2,\mbf{w}_2)$ and $\mbf{w}=\mbf{w}_1+a\mbf{w}_2$. It has also been proved in \cite{OS22} that the wall $R$-matrices are monomial in spectral parametre $u$:
\begin{align}
R_{w}^{\pm}|_{F_2\times F_1}=
\begin{cases}
1&F_1=F_2\\
(\cdots)a^{\langle\mu(F_2)-\mu(F_1),\mbf{m}\rangle}&F_1\geq F_2\text{ or }F_1\leq F_2\\
0&\text{Otherwise}
\end{cases}
\end{align}

Here $\mu$ is a locally constant map $\mu:X^A\rightarrow H_2(X,\mbb{Z})\otimes A^{\wedge}$ defined up to an overall translation such that $\mu(F_1)-\mu(F_2)=[C]\otimes v$ with $C$ an irreducible curve joining $F_1$ and $F_2$ with tangent weight $v$ at $F_1$. Usually it is convenient for us to choose $A$ to be one-dimensional torus such that $A^{\wedge}\cong\mbb{Z}$. $(\cdots)$ stands for the term in $K_{T}(pt)$.

In the case of the wall $R$-matrices $R_{w}^{\pm}$, $\mu(F_2)-\mu(F_1)$ corresponds to $\pm k\alpha$ with $\alpha$ being the root corresponding to the wall $w$.

Fix the stable envelope $\text{Stab}_{\sigma,\mbf{m}}$ and $\text{Stab}_{\sigma,\infty}$, we can have the following factorisation of $\text{Stab}_{\pm,\mbf{m}}$ near $u=0,\infty$:
\begin{equation}
\begin{aligned}
\text{Stab}_{\sigma,\mbf{m}}=&\text{Stab}_{\sigma,-\infty}\cdots\text{Stab}_{\sigma,\mbf{m}_{-2}}\text{Stab}_{\sigma,\mbf{m}_{-2}}^{-1}\text{Stab}_{\sigma,\mbf{m}_{-1}}\text{Stab}_{\sigma,\mbf{m}_{-1}}^{-1}\text{Stab}_{\sigma,\mbf{m}}\\
=&\text{Stab}_{\sigma,-\infty}\cdots R_{\mbf{m}_{-2},\mbf{m}_{-1}}^+R_{\mbf{m}_{-1},\mbf{m}}^+
\end{aligned}
\end{equation}

\begin{equation}
\begin{aligned}
\text{Stab}_{-\sigma,\mbf{m}}=&\text{Stab}_{-\sigma,\infty}\cdots\text{Stab}_{-\sigma,\mbf{m}_2}\text{Stab}_{-\sigma,\mbf{m}_2}^{-1}\text{Stab}_{-\sigma,\mbf{m}_1}\text{Stab}_{-\sigma,\mbf{m}_1}^{-1}\text{Stab}_{-\sigma,\mbf{m}}\\
=&\text{Stab}_{-\sigma,\infty}\cdots R_{\mbf{m}_2,\mbf{m}_1}^{-}R_{\mbf{m}_1,\mbf{m}}^{-}
\end{aligned}
\end{equation}

Here $R_{\mbf{m}_1,\mbf{m}_2}^{\pm}=\text{Stab}_{\pm\sigma,\mbf{m}_1}^{-1}\text{Stab}_{\pm\sigma,\mbf{m}_2}$ is the wall $R$-matrix. For simplicity we always choose generic slope points $\mbf{m}_i$ such that there is only one wall between $\mbf{m}_1$ and $\mbf{m}_2$. In this case we use $R_{w}^{\pm}$ as $R_{\mbf{m}_1,\mbf{m}_2}^{\pm}$. Note that this notation does not mean that $R_{w}$ only depends on the wall $w$, but we still use the notation for simplicity.

This gives the KT factorisation of the geometric $R$-matrix:
\begin{align}\label{factorisation-geometry}
\mc{R}^{s}(u)=\prod_{i<0}^{\leftarrow}R_{w_i}^{-}R_{\infty}\prod_{i\geq0}^{\leftarrow}R_{w_i}^{+}
\end{align}

This factorisation is well-defined in the topology of the Laurent formal power series in the spectral parametre $a$.

\section{\textbf{Quasimap moduli space and vertex functions}}
\subsection{Quasimap moduli space}\label{quasimap-moduli-space}
A quasimap $f:C\rightarrow X$ with a domain $C\cong\mbb{P}^1$ to a Nakajima quiver variety $X=M_{\bm{\theta},0}(\mbf{v},\mbf{w})$ is defined by the following data:
\begin{itemize}
	\item A collection of vector bundles $\mc{V}_i$, $i\in I$ on $C$ with ranks $\mbf{v}_i$.
	\item A collection of trivial vector bundle $\mc{Q}_{ij}$ and $\mc{W}_{i}$, $i,j\in I$ on $C$ with ranks $m_{ij}$ and $\mbf{w}_i$ respectively.
	\item A section
	\begin{align}
	f\in H^0(C,\mc{M}\oplus\mc{M}^*\otimes q^{-1})
	\end{align}
	satisfying the moment map condition $\mu=0$, where
	\begin{align}
	\mc{M}=\bigoplus_{ij\in E}\text{Hom}(\mc{V}_i,\mc{V}_j)\otimes\mc{Q}_{ij}\oplus\bigoplus_{i\in I}\text{Hom}(\mc{W}_i,\mc{V}_i)
	\end{align}
	Here $q^{-1}$ stands for a trivial line bundle on $C$ with $G_{\mbf{v}}$-equivariant weight $q^{-1}$.
\end{itemize}
The degree of a quasimap is defined as $\mbf{d}=(\text{deg}(\mc{V}_i))_{i=1}^n\in\text{Pic}(M(\mbf{v},\mbf{w}))\cong\mbb{Z}^n$.

Similarly, the quasimap to $M(\mbf{v}_1,\mbf{w}_1)\times\cdots\times M(\mbf{v}_k,\mbf{w}_k)$ is defined by the following data:
\begin{itemize}
	\item A collection of vector bundles $(\mc{V}_{i}^{(\alpha)})$, $i\in I$, $\alpha=1,\cdots,k$ of rank $\mbf{v}_i^{(\alpha)}$.
	\item A collection of trivial vector bundle $\mc{Q}_{ij}^{(\alpha)}$ and $\mc{W}_{i}^{(\alpha)}$ on $C$ with ranks $m_{ij}^{(\alpha)}$ and $\mbf{w}_{i}^{(\alpha)}$ respectively.
	\item A section
	\begin{align}
	f\in \bigoplus_{\alpha}H^0(C,\mc{M}^{(\alpha)}\oplus(\mc{M}^{(\alpha)})^*\otimes q^{-1})
	\end{align}
	satisfying the moment map condition $\mu=0$, where
	\begin{align}
	\mc{M}^{(\alpha)}=\bigoplus_{ij\in E}\text{Hom}(\mc{V}_i^{(\alpha)},\mc{V}_j^{(\alpha)})\otimes\mc{Q}_{ij}^{(\alpha)}\oplus\bigoplus_{i\in I}\text{Hom}(\mc{W}_i^{(\alpha)},\mc{V}_i^{(\alpha)})
	\end{align}
\end{itemize}

It is worth noting that in this case the degree of a quasimap is defined as $\{\mbf{d}^{(\alpha)}\}=(\text{deg}(\mc{V}_i^{(\alpha)}))_{i=1}^n\in\text{Pic}(M(\mbf{v}_1,\mbf{w}_1))\oplus\cdots\oplus\text{Pic}(M(\mbf{v}_k,\mbf{w}_k))$.

A stable quasimap from a genus $0$ curve $C$ to $M_{\bm{\theta},0}(\mbf{v},\mbf{w})\subset[\mu^{-1}(0)/G_{\mbf{v}}]$ \textbf{relative} to points $p_1,\cdots,p_m\in C$ is given with the curve $C'$, which has at most nodal singularities, $p_i'$ the nonsingular points of $C'$ and a regular map $\pi:C'\rightarrow C$ such that besides the data above replacing $C$ by $C'$. It satisfies the following conditions:
\begin{enumerate}
	\item There is a distinguished component $C_0'$ of $C'$ such that $\pi|_{C_0'}:C_0'\cong C$, and $\pi(C'\backslash C_0')$ is zero-dimensional.
	\item $\pi(p_i')=p_i$.
	\item $f(p)$ is stable i.e. $f(p)\in M_{\bm{\theta},0}(\mbf{v},\mbf{w})$ for all $p\in C'\backslash B$ where $B$ is finite subset of $C'$.
	\item The set $B$ is disjoint from the nodes and points $p_1',\cdots,p_m'$.
	\item Let $\tilde{C}=\overline{C'\backslash C_0}$ be the closure of $C'\backslash C_0$, and let $q_i$ be the nodes $C_0\cap\tilde{C}$. The fractional line bundle $\omega_{\tilde{C}}(\sum_{i}p_i'+\sum_{j}q_j)\otimes\mc{L}_{\bm{\theta}}^{\epsilon}$ is ample for every rational $\epsilon>0$ where $\mc{L}_{\bm{\theta}}:=P\times_{G}\mbb{C}_{\bm{\theta}}$
\end{enumerate}

A quasimap $f$ is called \textbf{nonsingular} at $p$ if $f(p)\subset M_{\bm{\theta},0}(\mbf{v},\mbf{w})$ and the quasimap is not relative to $p$.

Let $\text{QM}^{\mbf{d}}_{\substack{\text{rel }p_1,\cdots,p_m\\\text{nonsing }q_1,\cdots,q_n}}$ denote the stack parametrising stable genus zero quasimaps relative to $p_1,\cdots,p_m$ and nonsingular at $q_1,\cdots,q_n$ of fixed degree $\mbf{d}$.  

The moduli space of quasimaps constructed above has perfect deformation-obstruction theory \cite{CKM14}. This allows us to construct a tangent virtual bundle $T^{vir}$, a virtual structure sheaf $\mc{O}_{vir}$ and a virtual canonical bundle $\mc{K}_{vir}$. For the Nakajima quiver variety, it admits polarisation for the tangent bundle $TX$:
\begin{align}
TX=T^{1/2}X+q^{-1}(T^{1/2}X)^{\vee}\in K(X)
\end{align}
The choice of the polarisation induce a virtual bundle $\mc{T}^{1/2}$ over $C\times\text{QM}^{\mbf{d}}(X)$.

We will focus on the quasimap moduli space $QM^{\mbf{d}}_{(\cdots)p_1,(\cdots)p_2}(X)$ with two marked points $p_1,p_2$, i.e. the condition on $p_1,p_2$ can be either nonsingular or relative.

For the virtual tangent bundle on $QM^{\mbf{d}}_{p_1,p_2}(X)$ with $X$ the Nakajima quiver variety , we have that:
\begin{align}
T_{vir}=\text{Def}-\text{Obs}=H^*(C,\mc{T}),\qquad\mc{T}=\mc{M}\oplus q^{-1}\mc{M}^*-(1+q^{-1})\bigoplus_{(i)\in I}\mc{End}(\mc{V}_{i})
\end{align}

In the case of the product of the Nakajima quiver varieties $X_1\times\cdots\times X_n$, we have that the virtual tangent bundle bieng written as
\begin{align}
T_{vir}=\text{Def}-\text{Obs}=H^*(C,\mc{T}),\qquad\mc{T}=\sum_{\alpha}\mc{M}^{(\alpha)}\oplus q^{-1}(\mc{M}^{(\alpha)})^*-(1+q^{-1})\bigoplus_{(i)\in I}\mc{End}(\mc{V}_{i,(\alpha)})
\end{align}
This implies that
\begin{align}
T_{vir}QM(X_1\times\cdots\times X_n)=\sum_{i=1}^nT_{vir}QM(X_i)
\end{align}

In our situation, we tend to choose the symmetrise virtual structure sheaf:
\begin{align}
\hat{\mc{O}}_{vir}:=\mc{O}_{vir}\otimes(\mc{K}_{vir}\frac{\text{det}(\mc{T}^{1/2})|_{p_2}}{\text{det}(\mc{T}^{1/2})|_{p_1}})^{1/2}
\end{align}
and this is the reason why we need to choose the shift for the bilinear form of the $K$-theory in the formula \ref{twisted-bilinear-form}.

\subsection{Vertex functions with descendents}
Let $\text{QM}^{\mbf{d}}(M(\mbf{v},\mbf{w}))$ be the moduli space of stable quasimaps from $\mbb{P}^1$ to $M_{\bm{\theta},0}(\mbf{v},\mbf{w})$. There is a one-dimensional torus $\mbb{C}^*_{p}$ over $\mbb{P}^1$. The fixed point set of this action consist of two points $\{p_1,p_2\}=\{0,\infty\}\subset\mbb{P}^1$. 

Given a point $p\in\mbb{P}^1$, let $\text{QM}^{\mbf{d}}_{\text{nonsing }p}(M(\mbf{v},\mbf{w}))\subset\text{QM}^{\mbf{d}}(M(\mbf{v},\mbf{w}))$ be the open subset of quasimaps nonsingular at $p$. This subset comes together with the evaluation map:
\begin{align}
\text{ev}_{p}:\text{QM}^{\mbf{d}}_{\text{nonsing }p}(M(\mbf{v},\mbf{w}))\rightarrow M_{\bm{\theta},0}(\mbf{v},\mbf{w})
\end{align}
sending a quasimap to its value at $p$. This map is not a proper map, but it becomes proper when restricted to the $\mbb{C}_{p}^*$-fixed point part:
\begin{align}
\text{ev}_p:(\text{QM}^{\mbf{d}}_{\text{nonsing }p}(M_{\bm{\theta},0}(\mbf{v},\mbf{w})))^{\mbb{C}_p^*}\rightarrow M_{\bm{\theta},0}(\mbf{v},\mbf{w})
\end{align}

Moreover, the fixed point part $\text{QM}^{\mbf{d}}_{\text{nonsing }p}(M_{\bm{\theta},0}(\mbf{v},\mbf{w})))^{\mbb{C}_p^*}$ is empty if $\mbf{d}\notin C_{ample}^{\vee}$.

The moduli space of relative quasimaps $\text{QM}^{\mbf{d}}_{\text{rel }p}(M_{\theta,0}(\mbf{v},\mbf{w}))$ is a resolution of the map $\text{ev}$ such that we have a commutative diagram:
\begin{equation}
\begin{tikzcd}
 &\text{QM}^{\mbf{d}}_{\text{rel }p}(M_{\theta,0}(\mbf{v},\mbf{w}))\arrow[dr,"\hat{\text{ev}}_{p}"]&\\
\text{QM}^{\mbf{d}}_{p}(M_{\theta,0}(\mbf{v},\mbf{w}))\arrow[ur]\arrow[rr,"\text{ev}_{p}"]&&M_{\theta,0}(\mbf{v},\mbf{w})
\end{tikzcd}
\end{equation}

Now choose a tautological class $\tau\in K([\mu^{-1}(0)/G_{\mbf{v}}])$ in the moduli stack. We define the \textbf{bare vertex }with a descendent $\tau$ as the following formal power series:
\begin{align}
V^{(\tau)}(z)=\sum_{\mbf{d}\in C_{ample}^{\vee}}z^{\mbf{d}}\text{ev}_{p_2,*}(\hat{\mc{O}}_{\text{vir}}^{\mbf{d}}\otimes(\tau(\mc{V}))|_{p_1})
\in K_{T\times\mbb{C}_p^*}(M_{\bm{\theta},0}(\mbf{v},\mbf{w}))_{loc}\otimes\mbb{Q}[[z^{\mbf{d}}]]
\end{align}

We can compute the vertex function via equivariant localisation, it is important to understand the structure of the fixed locus $(\text{QM}_{\text{nonsing }\infty})^{\mbb{C}_p^*}$. The data for the fixed loci can be described as follows: Define
\begin{align}
\mbf{V}_a=\bigoplus_{k\in\mbb{Z}}\mbf{V}_a[k]=H^0(\mc{V}_a|_{C\backslash\{p_2\}})
\end{align}
where $\mbf{V}_a[k]$ is the subspace of weight $k$ with respect to $\mbb{C}^*_p$. By invariance, all quiver maps preserve this weight decomposition. We define the framing spaces $\mbf{W}[k]$ in the same way and obtain
\begin{align}\label{frame-grading}
\mbf{W}_a[k]=
\begin{cases}
W_a&k\leq0\\
0&k>0
\end{cases}
\end{align}
since the bundles $\mc{W}_a$ are trivial.

Multiplication by the coordinate induces an embedding
\begin{align}
\mbf{V}_a[k]\hookrightarrow\mbf{V}_a[k-1]\hookrightarrow\cdots\hookrightarrow\mbf{V}_a[-\infty]
\end{align}
compatible with the quiver maps, where $V_a$ is the quiver data for the point $f(\infty)\in X$. A $\mbb{C}_p^*$-fixed stable quasimap $f$ takes a constant stable value on $C\backslash\{0,\infty\}$ and, since $f$ is nonsingular at infinity, $f(\infty)$ is that generic value of $f$.

We conclude:
\begin{align}
\text{QM}_{\text{nonsing }\infty}^{\mbb{C}_p^*}(M(\mbf{v},\mbf{w}))=\begin{pmatrix}\text{A }\bm{\theta}\text{-stable quiver representation}\\+\text{A flag of subrepresentations satisfying \ref{frame-grading}}\end{pmatrix}/\prod_{a\in I}GL(V_a)
\end{align}

Similarly, for the space of quasimaps to $X_1\times\cdots\times X_k=M(\mbf{v}_1,\mbf{w}_1)\times\cdots\times M(\mbf{v}_k,\mbf{w}_k)$. The corresponding $\mbb{C}_p^*$-fixed point can be descirbed in the following way: We keep the convention $\mbf{V}_{a}$ and $\mbf{W}_{a}$ as above. In this case, the space $\mbf{V}_{a}$ and $\mbf{W}_{a}$ admits another factorisation:
\begin{align}\label{fixed-decomposition}
\mbf{V}_a=\bigoplus_{i=1}^k\mbf{V}_a^{(i)},\qquad\mbf{W}_a=\bigoplus_{i=1}^{k}\mbf{W}_a^{(i)}
\end{align}
with respect to the group action $G_{\mbf{v}_1}\times\cdots\times G_{\mbf{v}_k}$ and $G_{\mbf{w}_1}\times\cdots\times G_{\mbf{w}_k}$ which is compatible with the action given by $\mbb{C}_{p}^*$. Thus the corresponding fixed point is given as:
\begin{equation}
\begin{aligned}
&\text{QM}_{\text{nonsing }\infty}^{\mbb{C}_p^*}(M(\mbf{v}_1,\mbf{w}_1)\times\cdots\times M(\mbf{v}_k,\mbf{w}_k))\\
=&\begin{pmatrix}\text{A }\bm{\theta}\text{-stable quiver representation}\\+\text{A flag of subrepresentations satisfying \ref{frame-grading} and \ref{fixed-decomposition}}\end{pmatrix}/\prod_{i=1}^{k}G_{\mbf{v}_i}
\end{aligned}
\end{equation}

By the definition of the bare vertex function $\text{Vertex}^{(\tau)}(z)$, it is easy to see that it has the translation symmetry with respect to the line bundle class $\mc{L}$:
\begin{equation}\label{translation-symmetry}
\text{Vertex}^{(\tau\otimes\mc{L})}(a,z)=\mc{L}\text{Vertex}^{(\tau)}(a,zp^{\mc{L}})
\end{equation}

\subsection{Index limit of the vertex function}
Now we choose a subtorus $A\subset T$ for the quiver variety $X=M(\mbf{v},\mbf{w})$. Denote $X^A$ as the $A$-fixed point of $X$ and $i:X^A\hookrightarrow X$ the closed embedding. Also we choose the tautological class $\tau\in K_{T}(pt)[\cdots,x_{i1},\cdots,x_{in_i},\cdots]^{\text{Sym}}$ as the symmetric polynomial.

It has been proved in section $7.3$ of \cite{O15} that the vertex function with descendents has the index limit as $a\rightarrow0$:
\begin{align}\label{index-limit}
\lim_{a\rightarrow0}\text{Vertex}^{(\tau)}_{X}(a,z)=q^{\Omega}(-q^{-1/2})^{\text{deg }\mc{N}_{>0}}p^{-\text{deg }\mc{T}_{<0}^{1/2}}\text{Vertex}_{X^A}^{i^*\tau}(z_1,z_2)|_{z_1=z_2=z}
\end{align}
where
\begin{align}
\mc{N}_{>0}=\mc{T}_{>0}^{1/2}\oplus q^{-1}(\mc{T}_{<0}^{1/2})^{\vee}
\end{align}

and $\Omega$ is the operator $\Omega:=\frac{\text{codim}}{4}$

For example, for the case of Nakajima quiver varieties $M(\mbf{v},\mbf{w})$, choose a torus $A$ such that $\mbf{w}=\mbf{w}_1+a\mbf{w}_2$. In this case we have that:
\begin{align}\label{index-limit-formula}
\lim_{a\rightarrow0}\text{Vertex}^{(\tau(\mc{V}))}_{M(\mbf{v},\mbf{w})}(a,z)=q^{\frac{\text{codim}}{4}}(-q^{-1/2})^{\text{deg }\mc{N}_{>0}}p^{-\text{deg }\mc{T}_{<0}^{1/2}}\text{Vertex}_{X^A}^{i^*\tau(\mc{V})}(z_1,z_2)|_{z_1=z_2=z}
\end{align}

Using the polarisation \ref{polarisation-1} and \ref{polarisation-2}, we can see that the right hand side of \ref{index-limit-formula} can be further written as:
\begin{align}
q^{\frac{codim}{4}}\text{Vertex}_{X^A}^{i^*\tau(\mc{V})}(-z_1q^{-\frac{C\mbf{v}_2-\Delta\mbf{w}_2}{2}},-z_2q^{\frac{C\mbf{v}_1-\Delta\mbf{w}_1}{2}}p^{C\mbf{v}_1-\Delta\mbf{w}_1})|_{z_1=z_2=z}
\end{align}

Here $\Delta\mbf{w}$ is defined in \ref{difference}.

For simplicity, we usually use the coordinate transformation $z_1\mapsto-z_1$, $z_2\mapsto-z_2$. In fact this will not affect our result since without doing the transformation, we just need to consider the capping operator of the form $\Psi^{(\mbf{w}_1)}(-zq^{(\cdots)})\otimes\Psi^{(\mbf{w}_2)}(-zq^{(\cdots)})$.

Similarly, if we take $a\rightarrow\infty$, it is equivalent to take the $\mbf{w}=a^{-1}\mbf{w}_1+\mbf{w}_2$.

Now for the fully generalisation, we take:
\begin{align}
\mbf{w}=\mbf{w}_0+a_1\mbf{w}_1+\cdots+a_k\mbf{w}_k
\end{align}
such that $a_i$ goes whether to $0$ or to $\infty$.

\subsection{Factorisation property for the vertex functions}
Since the vertex function $\text{Vertex}^{(\tau)}_{X}(z)\in K_{T}(X)_{loc}$ is defined over the localised $K$-theory of $X$. It requires the analysis of the fixed point set $\text{QM}_{\text{nonsing }\infty}^{\mbb{C}_p^*}(M(\mbf{v},\mbf{w}))$. Moreover, as explained in section 3.3.3 of \cite{AFO18}, we can prove that $\text{QM}_{\text{nonsing }\infty}^{\mbb{C}_p^*}(M(\mbf{v},\mbf{w}))$ is a GIT quotient of the following stack quotient:
\begin{align}
\begin{pmatrix}\text{A }\bm{\theta}\text{-stable quiver representation}\\+\text{A flag of subrepresentations in }V_a\end{pmatrix}/\prod_{a\in I}GL(V_a)
\end{align}

Now we let a subtorus $A\subset T_{\mbf{w}}$ acts on the quiver varieties $M_{\bm{\theta},0}(\mbf{v},\mbf{w})$. Let $X^A=\bigsqcup_{\alpha}F_{\alpha}$. For every component $F_{\alpha}\subset X^A$, there is an obvious inclusion map:
\begin{align}
i_{QM}:QM(F_{\alpha})\hookrightarrow QM(X)^A
\end{align}

\begin{prop}\label{factorisation-vertex}
Given the GIT quotient $X=X_1\times\cdots\times X_k=M(\mbf{v}_1,\mbf{w}_1)\times\cdots\times M(\mbf{v}_k,\mbf{w}_k)$ of products of quiver varieties, we have:
\begin{align}
\text{Vertex}^{(\mc{F}_1\boxtimes\cdots\boxtimes\mc{F}_k)}_{X_1\times\cdots\times X_k}(z)=\text{Vertex}^{(\mc{F}_1)}_{X_1}(z)\otimes\cdots\otimes\text{Vertex}^{(\mc{F}_k)}_{X_k}(z)
\end{align}
\end{prop}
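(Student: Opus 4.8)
The plan is to reduce the statement to a Künneth-type factorization by realizing the quasimap moduli space of the product as a product of the quasimap moduli spaces of the factors, and then to check that every piece of data entering the definition of the bare vertex is multiplicative under this identification. First I would observe that the defining data of a quasimap $f\colon C\to X_1\times\cdots\times X_k$ splits with no coupling between the factors: the bundles $\mc{V}_i^{(\alpha)},\mc{Q}_{ij}^{(\alpha)},\mc{W}_i^{(\alpha)}$ and the section $f=\bigoplus_\alpha f^{(\alpha)}$ are indexed by $\alpha$, the moment map condition reads $\mu=\bigoplus_\alpha\mu^{(\alpha)}=0$, and the stability condition $\bm{\theta}=(\bm{\theta}^{(\alpha)})$ is checked componentwise. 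Since the common domain is a fixed $\mbb{P}^1$ (no bubbling occurs for the bare vertex, which is nonsingular at $p_2$), this yields a natural isomorphism
\begin{align}
\text{QM}^{\mbf{d}}_{\text{nonsing }p_2}(X_1\times\cdots\times X_k)\cong\prod_{\alpha=1}^k\text{QM}^{\mbf{d}^{(\alpha)}}_{\text{nonsing }p_2}(X_\alpha),\qquad \mbf{d}=(\mbf{d}^{(1)},\dots,\mbf{d}^{(k)}),
\end{align}
under which the evaluation map factors as $\text{ev}_{p_2}=\prod_\alpha\text{ev}_{p_2}^{(\alpha)}$. The hypothesis that the descendent is an external product then gives $\tau(\mc{V})|_{p_1}=(\mc{F}_1\boxtimes\cdots\boxtimes\mc{F}_k)(\mc{V})|_{p_1}=\boxtimes_\alpha\mc{F}_\alpha(\mc{V}^{(\alpha)})|_{p_1}$.

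Next I would check that the symmetrized virtual structure sheaf is multiplicative. This is a direct consequence of the additivity of the virtual tangent bundle recorded above, namely $T_{vir}\text{QM}(X_1\times\cdots\times X_k)=\sum_\alpha T_{vir}\text{QM}(X_\alpha)$: additivity of $T_{vir}$ forces $\mc{O}_{vir}=\boxtimes_\alpha\mc{O}_{vir}^{(\alpha)}$ and $\mc{K}_{vir}=\boxtimes_\alpha\mc{K}_{vir}^{(\alpha)}$, while the polarisation is additive, $\mc{T}^{1/2}=\sum_\alpha(\mc{T}^{1/2})^{(\alpha)}$, so the determinantal twist $\big(\mc{K}_{vir}\det(\mc{T}^{1/2})|_{p_2}/\det(\mc{T}^{1/2})|_{p_1}\big)^{1/2}$ splits as an external product as well. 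Hence $\hat{\mc{O}}_{vir}=\boxtimes_\alpha\hat{\mc{O}}_{vir}^{(\alpha)}$.

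With all ingredients multiplicative, I would finish by the projection formula together with the Künneth isomorphism on the product: for each fixed degree,
\begin{align}
\text{ev}_{p_2,*}\Big(\boxtimes_\alpha\big(\hat{\mc{O}}_{vir}^{(\alpha)}\otimes\mc{F}_\alpha(\mc{V}^{(\alpha)})|_{p_1}\big)\Big)=\boxtimes_\alpha\text{ev}_{p_2,*}^{(\alpha)}\big(\hat{\mc{O}}_{vir}^{(\alpha)}\otimes\mc{F}_\alpha(\mc{V}^{(\alpha)})|_{p_1}\big),
\end{align}
and then summing over degrees with $z^{\mbf{d}}=\prod_\alpha z^{\mbf{d}^{(\alpha)}}$ (using $\text{Pic}(\prod_\alpha X_\alpha)=\bigoplus_\alpha\text{Pic}(X_\alpha)$ and the common node set $I$) turns the single degree sum into a product of degree sums, which is exactly $\text{Vertex}^{(\mc{F}_1)}_{X_1}(z)\otimes\cdots\otimes\text{Vertex}^{(\mc{F}_k)}_{X_k}(z)$.

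The main obstacle is the justification of this last pushforward step: the evaluation map $\text{ev}_{p_2}$ is only proper after restriction to the $\mbb{C}_p^*$-fixed locus, so the pushforward must be interpreted through virtual localization in localized equivariant $K$-theory rather than as an honest proper pushforward. Here the required product structure is guaranteed by the decomposition \ref{fixed-decomposition} of the $\mbb{C}_p^*$-fixed locus of $\text{QM}(X_1\times\cdots\times X_k)$ together with the additivity of the virtual normal bundle, which ensures that the localization contributions multiply across the factors. Making this compatibility of the localized Künneth formula precise, and confirming that the normalization in $\hat{\mc{O}}_{vir}$ is preserved under localization, is the technical heart of the argument.
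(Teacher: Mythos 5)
Your proposal is correct and follows essentially the same route as the paper: both reduce the statement to $\mbb{C}_p^*$-equivariant localization, use the product decomposition of the fixed locus (the paper via the GIT-quotient description adapted from \cite{AFO18}, you via the splitting \ref{fixed-decomposition}), and conclude because the localized contributions factor thanks to the additivity $T_{vir}\text{QM}(X_1\times\cdots\times X_k)=\sum_\alpha T_{vir}\text{QM}(X_\alpha)$ and $\mc{T}^{1/2}_{X_1\times\cdots\times X_k}=\sum_\alpha\mc{T}^{1/2}_{X_\alpha}$. The technical heart you single out at the end --- that the localized $\hat{\mc{O}}_{vir}$-contributions multiply across the factors --- is precisely the content of the paper's identities \ref{facto-1} and \ref{facto-2}, so your argument and the paper's coincide in substance, with your preliminary splitting of the full quasimap space being a harmless extra observation.
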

\begin{proof}
The vertex function $\text{Vertex}^{(\mc{F}_1\boxtimes\cdots\boxtimes\mc{F}_k)}_{X_1\times\cdots\times X_k}(z)$ can be computed using the localisation formula and the quiver description for the fixed point subset $\text{QM}_{\text{nonsing }\infty}^{\mbb{C}_p^*}(M(\mbf{v}_1,\mbf{w}_1)\times\cdots\times M(\mbf{v}_k,\mbf{w}_k))$. By the similar argument in \cite{AFO18}, one can also show that $\text{QM}_{\text{nonsing }\infty}^{\mbb{C}_p^*}(M(\mbf{v}_1,\mbf{w}_1)\times\cdots\times M(\mbf{v}_k,\mbf{w}_k))$ is a GIT quotient of the followint stack quotient:
\begin{align}
&\begin{pmatrix}\text{A }\bm{\theta}\text{-stable quiver representation}\\+\text{A flag of subrepresentations in }V_a\text{ satisfying \ref{fixed-decomposition}}\end{pmatrix}/\prod_{i=1}^{k}G_{\mbf{v}_i}\\
\cong&\prod_{i=1}^{k}\begin{pmatrix}\text{A }\bm{\theta}\text{-stable quiver representation}\\+\text{A flag of subrepresentations in }V_a^{(i)}\}\end{pmatrix}/G_{\mbf{v}_i}
\end{align}
that it becomes the product of GIT quotients

After the localisation, only the $(\mc{O}_{vir}\otimes\mc{K}_{vir})_{moving}$ part of $\mc{O}_{vir}\otimes\mc{K}_{vir}$ contributes to the computation. Using the result of section 3.4 in \cite{AFO18}:
\begin{align}
(\mc{O}_{vir}\otimes\mc{K}_{vir})_{moving}=\wedge^*(T_{vir,moving}^{X_1\times\cdots\times X_k})^{-1}
\end{align}

From the formula we can see that it remains to prove that:
\begin{align}\label{facto-1}
\wedge^*(T_{vir,moving}^{X_1\times\cdots\times X_k})=\wedge^*(T_{vir,moving}^{X_1})\cdots\wedge^*(T_{vir,moving}^{X_k})
\end{align}

\begin{align}\label{facto-2}
\text{deg}(\mc{T}^{1/2}_{vir,X_1\times\cdots\times X_k})=\sum_{i=1}^{k}\text{deg}(\mc{T}^{1/2}_{vir,X_i})
\end{align}

While the identities \ref{facto-1} \ref{facto-2} come from the definition and the fact that $\mc{T}^{1/2}_{X_1\times\cdots\times X_k}=\sum_{i=1}^k\mc{T}^{1/2}_{X_i}$. Thus the proof is finished.

\end{proof}

\textbf{Remark.}
The matrix coefficients of the vertex function can be computed using the method in the Appendix of \cite{AFO18} to do the computation. Namely, for each $G_{\mbf{v}_i}$, we choose its corresponding maximal torus as $S_{i}$. Then the coordinate flags would have the following splitting into line bundles with respect to each torus $S_{i}$:
\begin{align}
\mc{V}_a=\bigoplus_{\alpha,i}\mc{L}_{a,\alpha}^{(i)},\qquad\mc{L}_{a,\alpha}^{(i)}=\mc{O}(d_{a,\alpha}^{(i)})
\end{align}

In this case the corresponding formula can be written as: For $\mc{F}_i\in K_{T}(X_{i})$, we have:
\begin{equation}
\begin{aligned}
&\chi(X_1\times\cdots\times X_k,\text{Vertex}\otimes(\mc{F}_1\boxtimes\cdots\boxtimes\mc{F}_k))\\
=&\frac{1}{|W|}\sum_{\{d_{a,\alpha}^{(i)}\}}q^{-\frac{1}{2}\text{deg}\mc{T}^{1/2}_{vir,X_1\times\cdots\times X_k}}\prod_{a,\alpha,i}z_{a,i}^{d_{a,\alpha}^{(i)}}\int_{\gamma_{\chi_1}}\cdots\int_{\gamma_{\chi_k}}\frac{\mc{F}_1(s_1)\cdots\mc{F}_k(s_k)d_{Haar}s}{\wedge^*(T_{vir,moving}^{X_1\times\cdots\times X_k})}\\
=&\chi(X_1,\text{Vertex}\otimes\mc{F}_1)\cdots\chi(X_k,\text{Vertex}\otimes\mc{F}_k)
\end{aligned}
\end{equation}

Here the Weyl group $W=W_1\times\cdots\times W_k$ is the product of the Weyl group for each $G_{\mbf{v}_i}$.

Combining the result above we have a more elaborate result on the index limit:
\begin{prop}\label{Bare-proposition-facto}
Given $\tau(\mc{V})\in K_{T}(pt)[\cdots,x_{i1},\cdots,x_{in_i},\cdots]$, the vertex function with descedents $\text{Vertex}^{(\tau(\mc{V}))}_{M(\mbf{v},\mbf{w})}(z,a)$ satisfies the following factorisation property as the equivariant parametre $a\rightarrow0$:
\begin{align}\label{Bare-factorisation}
\lim_{a\rightarrow0}\text{Vertex}^{(\tau(\mc{V}))}_{M(\mbf{v},\mbf{w})}(z,a)=\sum_{\mbf{v}_1+\mbf{v}_2=\mbf{v}}q^{\Omega}\text{Vertex}^{(\tau(\mc{V}))}_{M(\mbf{v}_1,\mbf{w}_1)}(zq^{-\frac{C\mbf{v}_2-\Delta\mbf{w}_2}{2}})\otimes\text{Vertex}^{(1)}_{M(\mbf{v}_2,\mbf{w}_2)}(zq^{\frac{C\mbf{v}_1-\Delta\mbf{w}_1}{2}}p^{C\mbf{v}_1-(\Delta\mbf{w}_1)})
\end{align}
\end{prop}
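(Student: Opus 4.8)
The plan is to combine the index limit formula \ref{index-limit-formula} with the multiplicativity of the vertex function proved in Proposition \ref{factorisation-vertex}. The key observation is that the fixed locus $\text{QM}^{\mathbb{C}_p^*}_{\text{nonsing }\infty}(M(\mbf{v},\mbf{w}))^A$ under the torus $A$ with $\mbf{w}=\mbf{w}_1+a\mbf{w}_2$ decomposes, via \ref{fixed-decomposition}, as a disjoint union over splittings $\mbf{v}=\mbf{v}_1+\mbf{v}_2$ of products of quasimap fixed loci for $M(\mbf{v}_1,\mbf{w}_1)$ and $M(\mbf{v}_2,\mbf{w}_2)$. Hence the starting point is to apply \ref{index-limit-formula} to rewrite the $a\to 0$ limit of $\text{Vertex}^{(\tau(\mc{V}))}_{M(\mbf{v},\mbf{w})}(z,a)$ as
\begin{align}
q^{\frac{\text{codim}}{4}}\text{Vertex}_{X^A}^{i^*\tau(\mc{V})}(-z_1 q^{-\frac{C\mbf{v}_2-\Delta\mbf{w}_2}{2}},-z_2 q^{\frac{C\mbf{v}_1-\Delta\mbf{w}_1}{2}}p^{C\mbf{v}_1-\Delta\mbf{w}_1})\big|_{z_1=z_2=z},
\end{align}
after absorbing the factors $(-q^{-1/2})^{\deg\mc{N}_{>0}}p^{-\deg\mc{T}^{1/2}_{<0}}$ into the shifted arguments as indicated immediately after \ref{index-limit-formula}.

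Next I would apply Proposition \ref{factorisation-vertex} to the product $X^A=\bigsqcup_{\mbf{v}_1+\mbf{v}_2=\mbf{v}}M(\mbf{v}_1,\mbf{w}_1)\times M(\mbf{v}_2,\mbf{w}_2)$. The restriction $i^*\tau(\mc{V})$ of the descendent splits as a box product along this decomposition: since $\mc{V}=\mc{V}^{(1)}\oplus\mc{V}^{(2)}$ on each component by \ref{fixed-decomposition}, a symmetric polynomial $\tau(\mc{V})$ restricts to $\tau(\mc{V}^{(1)}\oplus\mc{V}^{(2)})$, and the relevant piece carrying the descendent is $\tau(\mc{V}^{(1)})$ on the first factor while the second factor carries the trivial class $1$. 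Proposition \ref{factorisation-vertex} then turns $\text{Vertex}_{X^A}^{i^*\tau(\mc{V})}$ into the tensor product $\text{Vertex}^{(\tau(\mc{V}))}_{M(\mbf{v}_1,\mbf{w}_1)}\otimes\text{Vertex}^{(1)}_{M(\mbf{v}_2,\mbf{w}_2)}$, evaluated at the shifted Kahler variables. Matching the two shifted arguments from the previous display to the two tensor factors—$zq^{-\frac{C\mbf{v}_2-\Delta\mbf{w}_2}{2}}$ for the first and $zq^{\frac{C\mbf{v}_1-\Delta\mbf{w}_1}{2}}p^{C\mbf{v}_1-\Delta\mbf{w}_1}$ for the second—and summing over all splittings $\mbf{v}_1+\mbf{v}_2=\mbf{v}$ yields \ref{Bare-factorisation}.

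The main obstacle is bookkeeping rather than conceptual: one must verify that the normalization prefactors in \ref{index-limit} distribute correctly across the two tensor factors so that the $q$- and $p$-shifts land exactly as stated. Concretely, the terms $(-q^{-1/2})^{\deg\mc{N}_{>0}}$ and $p^{-\deg\mc{T}^{1/2}_{<0}}$ must be decomposed using the polarizations \ref{polarisation-1} and \ref{polarisation-2} into contributions attached to $\mbf{v}_1,\mbf{w}_1$ and $\mbf{v}_2,\mbf{w}_2$ separately, and one checks via the coordinate transformation $z_i\mapsto -z_i$ (remarked after \ref{index-limit-formula}) that the signs are absorbed consistently. The overall $q^{\Omega}=q^{\frac{\text{codim}}{4}}$ factor is common to each splitting and is pulled out of the sum. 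Since the factorization of the half-polarization $\mc{T}^{1/2}_{X^A}=\mc{T}^{1/2}_{M(\mbf{v}_1,\mbf{w}_1)}\oplus\mc{T}^{1/2}_{M(\mbf{v}_2,\mbf{w}_2)}$ is exactly the input to \ref{facto-2}, the degree shifts are additive and the computation closes without further input.
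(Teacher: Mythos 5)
Your overall skeleton --- the index limit formula \ref{index-limit-formula} followed by the product factorisation of Proposition \ref{factorisation-vertex} --- is exactly the route the paper intends (the paper presents this proposition as a direct combination of those two results). But there is a genuine gap at the one step that carries the actual content of the statement: your claim that the restriction $i^*\tau(\mc{V})$ ``splits as a box product'' with $\tau(\mc{V}^{(1)})$ on the first factor and the trivial class $1$ on the second. As stated this is false. On the component $M(\mbf{v}_1,\mbf{w}_1)\times M(\mbf{v}_2,\mbf{w}_2)$ the tautological bundle restricts to $\mc{V}^{(1)}\oplus a\,\mc{V}^{(2)}$ --- the second summand carries the $A$-weight $a$, which your write-up drops --- so $i^*\tau(\mc{V})=\tau(\mc{V}^{(1)}\oplus a\mc{V}^{(2)})$ is an $a$-dependent class; for instance for $\tau=\sum_j x_j$ it equals $\mc{V}^{(1)}\boxtimes 1+a\,(1\boxtimes\mc{V}^{(2)})$, not $\tau(\mc{V}^{(1)})\boxtimes 1$. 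The box product with trivial second factor appears only after taking $a\rightarrow 0$ \emph{inside the descendent}: the K-theoretic Chern roots of $a\mc{V}^{(2)}$ tend to $0$, and $\tau(x_1,\cdots,x_m,0,\cdots,0)=\tau(\mc{V}^{(1)})$ precisely because $\tau$ is a genuine symmetric polynomial with no negative powers of the $x$'s. This is the only place where the hypothesis $\tau\in K_{T}(pt)[\cdots,x_{i1},\cdots,x_{in_i},\cdots]$ (as opposed to a Laurent polynomial) is used, and your argument never invokes it.

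The omission is not a pedantic one: if your argument were correct as written, it would apply verbatim to Laurent descendents $\tau\in K_{T}(pt)[\cdots,x_{i1}^{\pm1},\cdots,x_{in_i}^{\pm1},\cdots]^{\text{Sym}}$, for which the limit of $i^*\tau(\mc{V})$ does not even exist (terms like $(ay_j)^{-1}$ blow up) and the factorisation \ref{Bare-factorisation} fails. This failure is exactly why the paper must handle general Laurent $\tau$ in Section \ref{section-6} by a separate argument, via the rationality of the quantum difference operator $\mbf{M}_{\mc{L}}(z)$ and the translation symmetry \ref{translation-symmetry}, instead of through this proposition. To repair your proof, insert an explicit intermediate step computing $\lim_{a\rightarrow0}i^*\tau(\mc{V})$ on each fixed component (keeping track of the weight $a$ on $\mc{V}^{(2)}$ from \ref{fixed-decomposition} and invoking polynomiality of $\tau$), obtaining $\tau(\mc{V}^{(1)})\boxtimes 1$, and only then apply Proposition \ref{factorisation-vertex} with $\mc{F}_1=\tau(\mc{V}^{(1)})$ and $\mc{F}_2=1$. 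The remaining bookkeeping in your proposal --- absorbing $(-q^{-1/2})^{\deg\mc{N}_{>0}}$ and $p^{-\deg\mc{T}^{1/2}_{<0}}$ into the Kahler shifts via the polarisations \ref{polarisation-1} and \ref{polarisation-2}, and the additivity of degrees as in \ref{facto-2} --- is consistent with the paper.
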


\section{\textbf{Nonabelian stable envelopes}}
Here we review the construction of the nonabelian stable envelope constructed in \cite{AO17}.

Given an oriented framed quiver $Q$, let $T^*\text{Rep}_{Q}(\mbf{v},\mbf{w})$ be the quiver representation we introduced in \ref{quiver-representation}. Let 
\begin{align}
\mu:T^*\text{Rep}(\mbf{v},\mbf{w})\rightarrow\mf{g}_{\mbf{v}}^*,\qquad G=\prod_{i\in I}GL(V_i)
\end{align}
and we mainly focus on the zero locus $\mu_{\mbf{v},\mbf{w}}^{-1}(0)$, and by definition, the Nakajima quiver variety is defined as:
\begin{align}
M_{\bm{\theta},0}(\mbf{v},\mbf{w}):=[\mu^{-1}_{\mbf{v},\mbf{w}}(0)^{\bm{\theta}-ss}/G_{\mbf{v}}]
\end{align}

This is an open substack of the quotient stack
\begin{align}
M_{\bm{\theta},0}(\mbf{v},\mbf{w})\subset\mc{X}_{\mbf{v},\mbf{w}}=[\mu^{-1}_{\mbf{v},\mbf{w}}(0)/G_{\mbf{v}}]\subset\mc{R}_{\mbf{v},\mbf{w}}=[T^*\text{Rep}(\mbf{v},\mbf{w})/G_{\mbf{v}}]
\end{align}

Let $V_i'$ be collection of vector spaces of $\text{dim}(V_i')=v_i$, and denote
\begin{align}
G'_{\mbf{v}}=\prod_{i\in I}GL(V_i')\cong G
\end{align}
Define
\begin{align}
Y(\mbf{v},\mbf{w}+\mbf{v}):=\mu^{-1}(0)_{\mbf{v},\mbf{w}+\mbf{v},iso}/G_{\mbf{v}}'
\end{align}

Here the stability condition for $\mu^{-1}(0)_{\mbf{v},\mbf{w}+\mbf{v}}$ is chosen as $\bm{\theta}'=(\bm{\theta},1,\cdots,1)$.
The "iso" refers to the locus of points where the framing maps
\begin{align}
V_i'\rightarrow V_i,\text{respectively}, V_i\rightarrow V_i'
\end{align}
are isomorphisms. Clearly we have that $\mu^{-1}(0)_{\mbf{v},\mbf{w}+\mbf{v},iso}\subset\mu^{-1}(0)_{\mbf{v},\mbf{w}+\mbf{v},G_{\mbf{v}}-stable}$. By construction, there is a $G_{\mbf{v}}$-equivariant map
\begin{align}\label{restriction-nonabelian}
i:T^*\text{Rep}(\mbf{v},\mbf{w})\hookrightarrow Y_{\mbf{v},\mbf{w}+\mbf{v}}
\end{align}
whose supplements quiver maps by
\begin{align}
(\phi,-\phi^{-1}\circ\mu_{GL(V_i)})\in\text{Hom}(V_i',V_i)\oplus\text{Hom}(V_i,V_i')
\end{align}
for a framing isomorphism
\begin{align}
\phi:V_i'\rightarrow V_i
\end{align}

After doing the quotient over $G_{\mbf{v}}$, we have the induced map
\begin{align}
i:[T^*\text{Rep}(\mbf{v},\mbf{w})/G_{\mbf{v}}]\hookrightarrow[Y_{\mbf{v},\mbf{v}+\mbf{w}}/G_{\mbf{v}}]
\end{align}

We have the following pullback diagram

\begin{equation}
\begin{tikzcd}
\mc{X}_{\mbf{v},\mbf{w}}\arrow[r]\arrow[d]&\mc{R}_{\mbf{v},\mbf{w}}\arrow[d,"\mu_{G'}\circ i"]\\
0/G'_{\mbf{v}}\arrow[r]&\mf{g}_{\mbf{v}}'/G_{\mbf{v}}'
\end{tikzcd}
\end{equation}

Let $U\subset G_{\mbf{v}}'$ be the one-dimensional torus with its defining weight $u$ on each $V_i'$. We have
\begin{align}
M(\mbf{v},\mbf{w})\sqcup M(\mbf{v},\mbf{v})\subset M(\mbf{v},\mbf{w}+\mbf{v})^U
\end{align}

Since $U$ commutes with $T\times G'$, stable envelopes give a $K_{T\times G'}(X)$-linear map
\begin{align}\label{stable-nonabelian-stable}
\text{Stab}_U:K_{T\times G'}(M(\mbf{v},\mbf{w}))\rightarrow K_{T\times G'}(M(\mbf{v},\mbf{w}+\mbf{v}))
\end{align}

Here the stable envelope is chosen such that:
\begin{itemize}
	\item The slope $s'$ of the stable envelope is chosen such that $s=\epsilon\cdot(\text{ample line bundle})$ for $\epsilon>0$ infinitesimally close to $0$, which is compatible with the stable envelope we have chosen for $M(\mbf{v},\mbf{w})$.
	\item The polarisation for $M(\mbf{v},\mbf{w}+\mbf{v})$ is chosen such that:
	\begin{align}
	T^{1/2}_{M(\mbf{v},\mbf{w}+\mbf{v})}=T^{1/2}_{M(\mbf{v},\mbf{w})}+\sum_{i\in I}q^{-1}\text{Hom}(V_i,V_i')
	\end{align}
\end{itemize}

Since $\text{Stab}(\alpha)$ is $G'$-equivariant, it descends to a class on $Y_{\mbf{v},\mbf{v}+\mbf{w}}/G_{\mbf{v}}$.
Then combining with the map $i:[T^*\text{Rep}(\mbf{v},\mbf{w})/G_{\mbf{v}}]\hookrightarrow[Y_{\mbf{v},\mbf{v}+\mbf{w}}/G_{\mbf{v}}]=[M(\mbf{v},\mbf{v}+\mbf{w})_{iso}/G_{\mbf{v}}']$, we set:
\begin{align}
\mbf{s}_{\alpha}=i^*\text{Stab}_U(\alpha)\in K_{T}(T^*\text{Rep}(\mbf{v},\mbf{w})/G_{\mbf{v}}])
\end{align}

We define the factor:
\begin{align}
\Delta_{q}=\wedge^*(\bigoplus_{i\in I} q^{-1}\text{Hom}(V_i,V_i))=\wedge^*(q^{-1}\mf{g}_{\mbf{v}})
\end{align}

Thus we define
\begin{align}
g_{\alpha}=\Delta_{q}^{-1}\mbf{s}_{\alpha}\in K_{T}(T^*\text{Rep}(\mbf{v},\mbf{w})/G_{\mbf{v}}])
\end{align}

This is called the \textbf{nonabelian stable envelope} for the class $\alpha\in K_{T}(X)$. It is supported on the stack $\mc{X}_{\mbf{v},\mbf{w}}\subset\mc{R}_{\mbf{v},\mbf{w}}$.

The usage of the nonabelian stable envelope is to replace the relative insertion by the nonsingular insertion in the computation of the capping operators.

\section{\textbf{Capped and Capping operators}}
\subsection{Capped and capping operators}
\subsubsection{Capped vertex and large framing vanishing}
For the quasimap moduli space $\text{QM}^{\mbf{d}}_{\text{rel }p_2}(X)$, it has a natural evaluation map $\hat{\text{ev}}_{p_2}:\text{QM}^{\mbf{d}}_{\text{rel }p_2}(X)\rightarrow X$.
We define the capped descendents by:
\begin{align}
\text{Cap}^{\tau}(z):=\sum_{\mbf{d}\in C_{ample}}z^{\mbf{d}}\hat{\text{ev}}_{p_2,*}(\hat{\mc{O}}_{vir}^{\mbf{d}}\otimes\tau(\mc{V}|_{p_1}))\in K(X)\otimes\mbb{Q}[[z]]
\end{align}
It is easy to see that:
\begin{align}
\text{Cap}^{\tau}(z)=\tau(V)K_{X}^{1/2}+O(z)
\end{align}

Here we review the large framing vanishing for the capped vertex function with descendents. For details see section 7.5.6 of \cite{O15}.

Let us suppose that the vector bundle $\mc{V}_{i}$ over the source curve $C$ can be decomposed as:
\begin{align}
\mc{V}_i\cong\bigoplus_{k}\mc{O}(d_{ik})
\end{align}
by the weight with respect to the $\mbb{C}^*_p$-action over $\mbb{C}$. We can have the following decomposition:
\begin{align}
\mc{V}_{i,\mbf{n}}:=\bigoplus_{d_{ik}<0}\mc{O}(d_{ik}),\qquad\mc{V}_{i,\mbf{p}}\cong\bigoplus_{d_{ik}>0}\mc{O}(d_{ik})
\end{align}

we have the following identity:
\begin{align}
\text{deg}(\mc{V})=\text{deg}(\mc{V}_{\mbf{n}})+\text{deg}(\mc{V}_{\mbf{p}})\in C_{ample}^{\vee}
\end{align}

\begin{defn}
Fix the stability parametre $\bm{\theta}$ for the quiver varieties, we say that the polarisation $T^{1/2}$ is large with respect to $\bm{\theta}'\in C_{ample}$ if
\begin{align}
\text{deg}(\mc{T}_{\mbf{p}}^{1/2})>\bm{\theta}\cdot\text{deg}(f),\qquad-\text{deg}(\mc{T}_{\mbf{n}}^{1/2})\geq\bm{\theta}\cdot\text{deg}(f)
\end{align}
for any nonconstant stable quasimap $f$ with stability parametre $\bm{\theta}'$.
\end{defn}

Now we choose the polarisation that is suitable for our situation.

We take:
\begin{align}\label{polarisaion-large-framing}
T^{1/2}M(\mbf{v},\mbf{w})=(\text{The polarisation in \ref{polarisation-0}})+\mc{L}-q^{-1}\mc{L}^{-1}
\end{align}

Here $\mc{L}=\otimes_{i\in I}(\text{det}(V_i))^{-C\theta_i}$ with $C>>0$. We also choose $\mbf{w}$ such that $w_i=C'|\theta_i|+(\text{something small})$ with $C'>>C>>0$. 

The following theorem is the statement of the large framing vanishing:
\begin{thm}
Given $\tau\in K_{G_{\mbf{v}}}(pt)$ and $\bm{\theta}\in C_{ample}$ chosen in the Corollary 7.5.6 of \cite{O15}. If the polarisation $T^{1/2}$ is large with respect to $(\text{deg}(\tau))\bm{\theta}$, we have:
\begin{align}
\text{Cap}^{(\tau)}(z)=\tau(\mc{V})\mc{K}_{X}^{1/2}
\end{align}
\end{thm}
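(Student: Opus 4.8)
The plan is to prove the vanishing of all the positive-degree contributions by a two-sided rigidity argument in the torus $\mbb{C}_p^*$ acting on the source $\mbb{P}^1$, fed by the degree estimates that are packaged into the largeness hypothesis. First I would write
\begin{align}
\text{Cap}^{(\tau)}(z)=\sum_{\mbf{d}}z^{\mbf{d}}C_{\mbf{d}},\qquad C_{\mbf{d}}:=\hat{\text{ev}}_{p_2,*}(\hat{\mc{O}}_{vir}^{\mbf{d}}\otimes\tau(\mc{V}|_{p_1})).
\end{align}
Since $\hat{\text{ev}}_{p_2}$ is proper, each $C_{\mbf{d}}$ is a genuine class in $K_{T}(X)$; carrying out the construction $\mbb{C}_p^*$-equivariantly and using that $X$ carries the trivial $\mbb{C}_p^*$-action, $C_{\mbf{d}}$ is a Laurent polynomial in the equivariant variable $p$ with coefficients in $K_{T}(X)$. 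The degree-zero term I would compute directly: for constant quasimaps $\text{QM}^{0}_{\text{rel }p_2}(X)=X$, the map $\hat{\text{ev}}_{p_2}$ is the identity, the determinant ratio at $p_1,p_2$ is trivial, and $\hat{\mc{O}}_{vir}^{0}=\mc{K}_X^{1/2}$, so $C_0=\tau(\mc{V})\mc{K}_X^{1/2}$. It then remains to show $C_{\mbf{d}}=0$ for $\mbf{d}\neq0$.

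Next I would compute $C_{\mbf{d}}$ by virtual localisation with respect to $\mbb{C}_p^*$. As recalled in the analysis of the index limit, a $\mbb{C}_p^*$-fixed quasimap is constant on $C\setminus\{p_1,p_2\}$ and its tautological bundles split by weight as $\mc{V}_i\cong\bigoplus_k\mc{O}(d_{ik})$, giving $\mc{V}_i=\mc{V}_{i,\mbf{n}}\oplus\mc{V}_{i,\mbf{p}}$ and a corresponding splitting $\mc{T}^{1/2}=\mc{T}^{1/2}_{\mbf{n}}\oplus\mc{T}^{1/2}_{0}\oplus\mc{T}^{1/2}_{\mbf{p}}$ of the polarisation. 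On such a locus the descendent $\tau(\mc{V}|_{p_1})$ only detects the fibre at $p_1$, so the leading $p$-weights of $\hat{\mc{O}}_{vir}^{\mbf{d}}\otimes\tau(\mc{V}|_{p_1})$ and of the virtual normal bundle are governed by $\text{deg}(\mc{T}^{1/2}_{\mbf{p}})$ and $\text{deg}(\mc{T}^{1/2}_{\mbf{n}})$, together with the shift by $(\text{deg}\,\tau)\bm{\theta}\cdot\mbf{d}$ coming from the insertion (this is exactly why largeness is demanded with respect to $(\text{deg}\,\tau)\bm{\theta}$ rather than $\bm{\theta}$).

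The crux will be the degree estimate. The hypothesis that $T^{1/2}$ is large supplies, for every nonconstant stable quasimap, hence every $\mbf{d}\neq0$, the two inequalities
\begin{align}
\text{deg}(\mc{T}^{1/2}_{\mbf{p}})>\bm{\theta}\cdot\text{deg}(f),\qquad-\text{deg}(\mc{T}^{1/2}_{\mbf{n}})\geq\bm{\theta}\cdot\text{deg}(f).
\end{align}
Tracking these through the localisation formula, the strict first inequality should force the total contribution to $C_{\mbf{d}}$ to be $O(p)$ as $p\to0$, giving $\lim_{p\to0}C_{\mbf{d}}=0$, while the second (non-strict) inequality keeps $C_{\mbf{d}}$ bounded as $p\to\infty$. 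A Laurent polynomial in $p$ that is bounded at $p=\infty$ has no positive powers of $p$, while one whose limit at $p=0$ vanishes has no powers of non-positive degree; hence $C_{\mbf{d}}=0$ for all $\mbf{d}\neq0$, and $\text{Cap}^{(\tau)}(z)=C_0=\tau(\mc{V})\mc{K}_X^{1/2}$.

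The main obstacle I anticipate is the bookkeeping in the last step: one must assemble the symmetrising half-determinants at $p_1$ and $p_2$, the descendent weight $\text{deg}\,\tau$, and the virtual normal weights into a single $\mbb{C}_p^*$-character, and then verify that the two inequalities of the largeness condition translate precisely into "vanishing at $p=0$" and "boundedness at $p=\infty$" for the \emph{total} pushforward, not merely for the separate fixed loci (whose individual contributions carry poles in $p$ that must cancel against one another). The role of the symmetrised virtual structure sheaf $\hat{\mc{O}}_{vir}$ is exactly to make these leading weights balanced enough that the two one-sided bounds close the window; confirming this compatibility, and that no boundary contribution from the relative structure at $p_2$ spoils it, is the delicate point of the argument.
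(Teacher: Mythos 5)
Your proposal is correct and takes essentially the same route as the paper: the paper gives no proof of its own here, deferring to Theorem 7.5.23 of \cite{O15}, and that proof is precisely the rigidity argument you outline — properness of $\hat{\text{ev}}_{p_2}$ makes each coefficient $C_{\mbf{d}}$ a Laurent polynomial in the $\mbb{C}_p^*$-variable, and $\mbb{C}_p^*$-localisation together with the strict/non-strict largeness inequalities pinches that Laurent polynomial between vanishing at one end and boundedness at the other, forcing $C_{\mbf{d}}=0$ for $\mbf{d}\neq0$. The weight bookkeeping you flag as the delicate point is exactly what occupies Definition 7.5.15 through Theorem 7.5.23 in \cite{O15}.
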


For the proof of the theorem see the Theorem 7.5.23 in \cite{O15}.

\subsubsection{Capping operators}
Another important ingredients is the capping operator for the GIT quotient. Consider the quasimap moduli space $\text{QM}^{\mbf{d}}_{\text{rel }p_1,\text{nonsing }p_2}(X)$. We first introduce the general setting for the capping operators.

We can define
\begin{align}\label{capping}
\Psi(z)=\sum_{\mbf{d}\in C_{ample}^{\vee}}z^{\mbf{d}}(\hat{\text{ev}}_{p_1}\times\text{ev}_{p_2})_*(\hat{\mc{O}}_{vir}^{\mbf{d}})\in K(X)^{\otimes2}_{loc}\otimes\mbb{Q}[[z]]
\end{align}

Here $\hat{\text{ev}}_{p_1}\times\text{ev}_{p_2}:\text{QM}^{\mbf{d}}_{\text{rel }p_1,\text{nonsing }p_2}(X)\rightarrow X\times X$ is the evaluation map, and $\text{ev}_{p_2}$ is only proper on the $\mbb{C}_{p}^*$-fixed points.

Moreover, the capping operator can be seen as an adjoint of the linear operator $\Psi^{\dagger}$ on $K_{T}(M(\mbf{v},\mbf{w}))\rightarrow K_{T}(M(\mbf{v},\mbf{w}))_{loc}[[z]]$ via:
\begin{align}
\Psi^{\dagger}(z)(\alpha)=\sum_{\mbf{d}}z^{\mbf{d}}\text{ev}_{p_2*}(\hat{\text{ev}}_{p_1}^*(\alpha)\otimes\hat{\mc{O}}_{vir}^{\mbf{d}}):K_{G}(X)\rightarrow K_{G\times\mbb{C}_{p}^*}(X)_{loc}[[z]]
\end{align}
and it is an invertible operator on $K_{T}(X)_{loc}$.

It is easy to see that with respect to the bilinear form $(-,-)$ in \ref{twisted-bilinear-form}, we have the following expansion:
\begin{align}
\Psi(z)=1+O(z)\in\text{End}(K_{T}(X)_{loc})[[z^{\mbf{d}}]]
\end{align}

It is known that \cite{AO17} the adjoint of the capping operator with the input $\alpha$ can be replaced by the descendent insertion with the normalised nonabelian stable envelopes of $\alpha$:
\begin{align}
\Psi^{\dagger}(z)(\alpha)=\text{Vertex}^{(g_{\alpha})}(z)=\sum_{\mbf{d}}z^{\mbf{d}}\text{ev}_{p_2*}(\text{ev}_{p_1}^*(g_{\alpha})\otimes\hat{\mc{O}}_{vir}^{\mbf{d}}):K_{G}(\mc{X})\rightarrow K_{G\times\mbb{C}_{p}^*}(\mc{X})_{loc}[[z]]
\end{align}

Thus the adjoint of the capping operator with the input by the class $\alpha$ is equivalent to the bare vertex function with descendents $g_{\alpha}$.

Now since $\pi$ is an isomorphism over $p_2$, we have:
\begin{align}
\text{det}(H^*(\mc{V}\otimes\pi^*(\mc{O}_{p_2})))=\text{ev}_2^*\mc{L},\qquad\mc{L}=\text{det}(\mc{V})
\end{align}

Using this definition, one can define the operator:
\begin{align}
\mbf{M}_{\mc{L}}(z)=\sum_{\mbf{d}\in C_{ample}}z^{\mbf{d}}(\hat{\text{ev}}_{p_1}\times\hat{\text{ev}}_{p_2})_*(\hat{\mc{O}}_{vir}^{\mbf{d}}\text{det}(H^*(\mc{V}\otimes\pi^*(\mc{O}_{p_1}))))\mbf{G}^{-1}
\end{align}

Here $\mbf{G}(z)=\sum_{\mbf{d}\in C_{ample}}z^{\mbf{d}}(\hat{\text{ev}}_{p_1}\times\hat{\text{ev}}_{p_2})_*(\hat{\mc{O}}_{vir}^{\mbf{d}}))$ is the gluing operator,.

The quantum difference operator is defined as an integral $K$-theory class. Moreover, by construction we have:
\begin{align}
\mbf{M}_{\mc{L}}(z)=\mc{L}+O(z)
\end{align}

Using the equivariant degeneration of the curve $C\mapsto C_1\cup C_2$, we have the following formula:
\begin{align}
\Psi(zp^{\mc{L}})\mc{L}=\mbf{M}_{\mc{L}}(z)\Psi(z)
\end{align}

This is a difference equation over the Kahler variables. Moreover one can also have the difference equation over the equivariant variables.

We define the operator:
\begin{align}
\Psi^{\sigma}:K_T(X)\rightarrow K_{T\times\mbb{C}^*_{p}}(X)_{loc}\otimes\mbb{Q}[[z]]
\end{align}
as the capping operator but for twisted quasimaps, i.e. the action of $\mbb{C}_{p}^*$ on $X$ is given by $\sigma$.

By computation one can show that:
\begin{align}
\Psi^{\sigma}=J(aq^{\sigma})E(x,\sigma)
\end{align}
Here:
\begin{align}
E(x,\sigma)=\frac{\text{edge term for }f_{\sigma}}{(\text{edge form for }f_0)|_{a\mapsto q^{\sigma}a}}
\end{align}
Here $f_{\sigma}$ is defined such that for each fixed point $x\in X^{\sigma}$ we can associate a twisted quasimap $f$ such that $f(c)=x$ for all $c\in C$. If the variety $X$ admits the polarisation  $T=T^{1/2}+q^{-1}(T^{1/2})^{\vee}$, we have the following formula:
\begin{align}
E(x,\sigma)=(z_{\text{shifted}})^{\langle\mu(x),-\otimes\sigma\rangle}\Phi((1-pq)(T_{x,\sigma}^{1/2}-T_{x,0}^{1/2}))
\end{align}
and in our case we will denote $E(x,\sigma)$ by $E_{\sigma}(z,a)$

We can define the shift operator as:
\begin{align}
\mbf{S}_{\sigma}=\sum_{\mbf{d}}z^{\mbf{d}}\hat{\text{ev}}_{*}(\text{twisted quasimaps},\hat{\mc{O}}_{vir}^{\mbf{d}})\mbf{G}^{-1}
\end{align}

For the detail of the above definition see \cite{O15}.
One immediately has the following difference equations:
\begin{align}
\Psi(z,q^{\sigma}a)E_\sigma(z,a)=\mbf{S}_{\sigma}(z,a)\Psi(z,a)
\end{align}

The conjugation of the shift operator to qKZ is given by the stable envelopes:
\begin{align}\label{R-matrix-shift}
\text{Stab}_{s,\sigma}^{-1}\tau_{\sigma}^{-1}\mbf{S}_{\sigma}\text{Stab}_{s,\sigma}=\tau_{\sigma}^{-1}(-1)^{\frac{\text{codim}}{2}}z^{deg}\mc{R}^s_{\sigma}
\end{align}
with $s\in-C_{ample}$. Here $\tau_{\sigma}$ is the shifting $a\mapsto q^{\sigma}a$.

\subsection{\textbf{Quantum difference equations and cocycle identities}}
It is known that the capping operator $\Psi(z)$ satisfies the quantum difference equation:
\begin{align}
\Psi(zp^{\mc{L}})\mc{L}=\mbf{M}_{\mc{L}}(z)\Psi(z)
\end{align}

It has been proved that \cite{OS22} the quantum difference operator $\mbf{M}_{\mc{L}}(z)$ can be expressed in terms of the generators of the MO quantum affine algebras $U_{q}(\hat{\mf{g}}_{Q})$ up to a constant operator:
\begin{align}
\text{Stab}_{+,T^{1/2},s}^{-1}\mbf{M}_{\mc{L}}(z)\text{Stab}_{+,T^{1/2},s}=\text{Const}\cdot\Delta_{s}(\mbf{B}_{\mc{L}}^s(u,z))
\end{align}
with $s\in-C_{ample}$.

Here $\text{Const}$ is some function in the variables $q$, $t_e$ and the Kahler variables $z$. Also:
\begin{align}
\mbf{B}_{\mc{L}}^s(u,z)=\mc{L}\prod^{\leftarrow}_{w\in[s,s-\mc{L})}\mbf{B}_{w}(u,z)
\end{align}

Thus for the fundamental solution to the difference equation:
\begin{align}
\Psi_{s}(zp^{\mc{L}})\mc{L}=\mbf{B}_{\mc{L}}^s(u,z)\Psi_s(z)
\end{align}

It is connected to the capping operator $\Psi(z)$ via:
\begin{equation}
\begin{aligned}
&\mbf{M}_{\mc{L}}(z)\Psi(z)=\mbf{M}_{\mc{L}}(z)\text{Stab}_{+,T^{1/2},s}\text{Stab}_{+,T^{1/2},s}^{-1}\Psi(z)\\
=&\text{Stab}_{+,T^{1/2},s}\text{Stab}_{+,T^{1/2},s}^{-1}\mbf{M}_{\mc{L}}(z)\text{Stab}_{+,T^{1/2},s}\text{Stab}_{+,T^{1/2},s}^{-1}\Psi(z)\\
=&\text{Const}\cdot\text{Stab}_{+,T^{1/2},s}\Delta_{s}(\mbf{B}^s_{\mc{L}}(u,z))\text{Stab}_{+,T^{1/2},s}^{-1}\Psi(z)=\Psi(zp^{\mc{L}})\mc{L}
\end{aligned}
\end{equation}

Thus we have that:
\begin{align}
\text{Const}\cdot\mbf{B}^{s}_{\mc{L}}(u,z)\Psi(z)=\Psi(zp^{\mc{L}})\mc{L}
\end{align}

\subsection{Smaller capping operators}

We define a smaller capping operator:
\begin{align}
\Psi_{X^A}(z)=\sum_{\mbf{d}\in C_{ample}^{\vee}}z^{\mbf{d}}(\hat{\text{ev}}_{p_1}\times\text{ev}_{p_2})_*(\hat{\mc{O}}_{vir}^{\mbf{d}})\in K(X^A)_{loc}^{\otimes 2}\otimes\mbb{Q}[[z]]
\end{align}
which is defined over the quasimap moduli space $QM_{\text{rel }p_1,p_2}(X^A)$ over the fixed point component $X^A$, it is easy to see that if we write $X^A=\sqcup_{\alpha}F_{\alpha}$, $\Psi_{X^A}(z)$ is a block diagonal operator sending $K_{T}(F_{\alpha})_{loc}$ to $K_{T}(F_{\alpha})[[z]]$.

It is not hard to check that:
\begin{align}
\text{Cap}^{(\tau)}_{X^A}(z)=\Psi_{X^A}(z)\text{Vertex}_{X^A}^{(\tau)}(z)
\end{align}

If we fix $X=M(\mbf{v},\mbf{w})$ and $A$ such that $X^A=\sqcup_{\mbf{v}_1+\mbf{v}_2=\mbf{v}}M(\mbf{v}_1,\mbf{w}_1)\times M(\mbf{v}_2,\mbf{w}_2)$. We denote the correpsonding smaller capping operator as $\Psi^{(\mbf{w}_1),(\mbf{w}_2)}(z)$. Note that since $\Psi^{(\mbf{w}_1),(\mbf{w}_2)}(z)$ has its Kahler variable from $M(\mbf{v}_1,\mbf{w}_1)\times M(\mbf{v}_2,\mbf{w}_2)$, we usually denote it by $\Psi^{(\mbf{w}_1),(\mbf{w}_2)}(z_1,z_2)$.

We define the operator:
\begin{align}
\mbf{J}^{(\mbf{w}_1),(\mbf{w}_2)}(z)=i^*\Psi^{(\mbf{w}_1+\mbf{w}_2)}(z)(i^*)^{-1}(\Psi^{(\mbf{w}_1),(\mbf{w}_2)}(zq^{\frac{\Delta\mbf{w}_2-C\mbf{v}_2}{2}}, zq^{\frac{C\mbf{v}_1-\Delta\mbf{w}_1}{2}}))^{-1}
\end{align}

Then choose a splitting $\mbf{w}=\mbf{w}_1+a\mbf{w}_2$ and define the operator:
\begin{align}
Y^{(\mbf{w}_1),(\mbf{w}_2)}(z)=\lim_{a\rightarrow0}\mbf{J}^{(\mbf{w}_1),(\mbf{w}_2)}(z)
\end{align}

It is easy to see that $Y^{(\mbf{w}_1),(\mbf{w}_2)}(z)$ is well-defined and it is a lower-triangular operator satisfying the dynamical cocycle identity:
\begin{equation}
Y^{(\mbf{w}_1+\mbf{w}_2),(\mbf{w}_3)}(z)Y^{(\mbf{w}_1),(\mbf{w}_2)}(zq^{\frac{\Delta\mbf{w}_3-C\mbf{v}_3}{2}})=Y^{(\mbf{w}_1),(\mbf{w}_2+\mbf{w}_3)}(z)Y^{(\mbf{w}_2),(\mbf{w}_3)}(zq^{-\frac{\Delta\mbf{w}_1-C\mbf{v}_1}{2}})
\end{equation}

\subsection{qKZ equation to wKZ equation}
It has been shown that the capping operator $\Psi(z,a)$ satisfies the shift operator difference equation:
\begin{align}
\Psi(z,ap)E(z,a)=S(z,a)\Psi(z,a)
\end{align}

Also we know that:
\begin{equation}
\begin{aligned}
&\text{Stab}_{+,T^{1/2},s}^{-1}S(z,a)\text{Stab}_{+,T^{1/2},s}=z_{(1)}\mc{R}^s(a)\\
&\text{Stab}_{+,T^{1/2},\infty}^{-1}E(z,a)\text{Stab}_{+,T^{1/2},\infty}=z_{(1)}\mc{R}^{\infty}(a)
\end{aligned}
\end{equation}

Thus we found that $\Phi_s(z,a):=\text{Stab}_{+,T^{1/2},s}^{-1}\Psi(z,a)\text{Stab}_{+,T^{1/2},\infty}$ solves the 2-point qKZ equation:
\begin{align}
\Phi_s(z,ap)z_{(1)}\mc{R}^{\infty}(a)=z_{(1)}\mc{R}^s(a)\Phi_s(z,a)
\end{align}
for the slope $s\in-C_{ample}$ which is really close to $0$.

We mainly study the limit behavior of the qKZ equation when $a\rightarrow0$.

\begin{prop}\label{wKZ-proposition}
As $a\rightarrow0$, the qKZ equation goes to the following wKZ equation:
\begin{align}\label{wKZ-equation-01}
z_{(1)}^{\lambda}(R_0^{-,(\mbf{w}_1),(\mbf{w}_2)})^{-1}q^{\Omega}\Phi_s(z,0)=\Phi_s(z,0)q^{\Omega}z_{(1)}^{\lambda}
\end{align}
\end{prop}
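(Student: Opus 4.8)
The plan is to extract the leading $a\to0$ behaviour of the two-point qKZ equation
\begin{align}
\Phi_s(z,ap)z_{(1)}\mc{R}^{\infty}(a)=z_{(1)}\mc{R}^s(a)\Phi_s(z,a).
\end{align}
First I would substitute into both sides the KT factorisation \ref{factorisation-geometry} of the geometric $R$-matrices $\mc{R}^s(a)$ and $\mc{R}^{\infty}(a)$ into products of wall $R$-matrices, together with the result of \cite{OS22} that each $R_w^{\pm}$ is monomial in $a$, with its block $F_2\times F_1$ equal to $(\cdots)\,a^{\langle\mu(F_2)-\mu(F_1),\mbf{m}\rangle}$ with $(\cdots)\in K_{T}(pt)$. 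This monomiality is exactly what makes the limit computable block by block: the power of $a$ carried by each off-diagonal entry is determined by $\mu$, so one can read off which factors vanish, which blow up, and which survive as $a\to0$.

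Next I would analyse each factor separately. Restricting to a fixed component $K_{T}(M(\mbf{v},\mbf{w})^A)$, the a priori infinite product $\prod_{w}^{\leftarrow}R_w^{-}$ reduces to a finite product (this is the reduction described in the introduction and is where working on each $A$-fixed component is essential), so all limits are well defined. The upper-triangular factors $\prod_{i\geq0}^{\leftarrow}R_{w_i}^{+}$, having strictly positive powers of $a$ off the diagonal, tend to the identity; the slope-infinity factor $R_{\infty}$ degenerates to a diagonal operator; and the lower-triangular factors assemble into the operator $R_0^{-,(\mbf{w}_1),(\mbf{w}_2)}=\prod_{w}^{\leftarrow}R_w^{-}$ appearing in \ref{wKZ-equation-01}.

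For the capping operator I would use the index limit \ref{index-limit}: since $ap\to0$ as $a\to0$, both $\Phi_s(z,ap)$ and $\Phi_s(z,a)$ converge to the same $A$-fixed-point capping operator $\Phi_s(z,0)$, whose existence is guaranteed by the factorisation of the vertex (Proposition \ref{Bare-proposition-facto}) and the block-diagonal structure of $\Psi_{X^A}(z)$. The index limit also supplies the factor $q^{\Omega}$, and the monomial $z_{(1)}^{\lambda}$ comes from the $z^{\deg}$-part of $R_\infty$. Substituting these limits into the qKZ equation, the diagonal $q^{\Omega}$ and $z_{(1)}^{\lambda}$ pieces assemble on the two sides while the surviving lower-triangular factor produces $(R_0^{-,(\mbf{w}_1),(\mbf{w}_2)})^{-1}$, yielding exactly \ref{wKZ-equation-01}.

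The hard part will be the bookkeeping of the $a\to0$ limit, namely checking that the negative-power (divergent) monomials in the lower-triangular wall matrices are precisely cancelled by the vanishing asymptotics of $\Phi_s(z,a)$ coming from the index limit, so that the resulting equation is finite, and correctly identifying the diagonal survivor of $R_{\infty}$ with $q^{\Omega}z_{(1)}^{\lambda}$. Controlling the infinite product of walls — reducing it to a finite one after restriction to $X^A$ — is the key technical input, and is exactly where the hypothesis of working on a single fixed component enters.
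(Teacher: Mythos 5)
Your proposal takes essentially the same route as the paper's proof: substitute the KT factorisation \ref{factorisation-geometry} into the qKZ equation, use the monomiality in $a$ of the wall $R$-matrices from \cite{OS22} to conclude that every wall with $(s,\alpha)+n=0$, $n\neq0$, degenerates to the identity as $a\to0$, so that only the walls through zero survive and assemble into $(R_0^{-,(\mbf{w}_1),(\mbf{w}_2)})^{-1}$, while the diagonal limit of $R_{\infty}$ supplies $q^{\Omega}z_{(1)}^{\lambda}$ and $\Phi_s(z,a)\to\Phi_s(z,0)$. Two minor remarks that do not affect correctness: under the paper's conventions the surviving walls are constant in $a$ and all others carry strictly positive powers of $a$, so the divergence/cancellation bookkeeping you flag as the hard part does not actually arise, and the factor $q^{\Omega}$ is best attributed to the limit of $R_{\infty}$ rather than to the index limit of the capping operator.
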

\begin{proof}
Now we take $a\rightarrow0$, and in this case using the KT factorisation for the geometric $R$-matrix \ref{factorisation-geometry}:
\begin{align}
\mc{R}^{s}(u)=\prod_{i<s}^{\leftarrow}R_{w_i}^{-}R_{\infty}\prod_{i\geq s}^{\leftarrow}R_{w_i}^{+}
\end{align}

One can see that under our choice of the slope $s$, the wall $R$-matrix $R_{w}^{-}$ appearing in the first infinite product has the matrix coefficients as $a^{-\langle\mc{L}_{w},\alpha\rangle}$, and here the wall $w$ is defined as $\langle s,\alpha_{w}\rangle=n\leq0$. Thus we can see that as $a\rightarrow0$, the one that does not contain zero goes to the identity operator. Similar argument for $\prod_{i\geq0}^{\leftarrow}R_{w_i}^{+}$ shows that as $a\rightarrow0$, it will go to the identity operator.

Thus $\Phi_s(z,0)$ has the equaton written as:
\begin{align}
z_{(1)}^{\lambda}(R_0^{-,(\mbf{w}_1),(\mbf{w}_2)})^{-1}q^{\Omega}\Phi_s(z,0)=\Phi_s(z,0)q^{\Omega}z_{(1)}^{\lambda}
\end{align}

Here $\mc{R}_{0}^{-}=\prod^{\leftarrow}_{w}R_{w}^-$ is the ordered product of the wall $R$-matrix $R_{w}^-$ such that the wall $w$ corresponds to the linear equation of the form $(\alpha,s)=0$.

\end{proof}

Now we evaluate $a$ to be $0$, we will have the following formula:
\begin{align}
i^*\Psi^{(\mbf{w}_1+\mbf{w}_2)}(z,0)(i^*)^{-1}=Y^{(\mbf{w}_1),(\mbf{w}_2)}(z)q^{\Omega}\Psi^{(\mbf{w}_1),(\mbf{w}_2)}(zq^{\frac{\Delta\mbf{w}_2-C\mbf{v}_2}{2}}, zq^{\frac{C\mbf{v}_1-\Delta\mbf{w}_1}{2}}))
\end{align}

We denote:
\begin{align}\label{fusion-operator}
H^{(\mbf{w}_1),(\mbf{w}_2)}(z)=\lim_{a\rightarrow0}\text{Stab}_{+,T^{1/2},s}^{-1}(i^*)^{-1}J^{(\mbf{w}_1),(\mbf{w}_2)}(z)i^*\text{Stab}_{+,T^{1/2},\infty}
\end{align}
and we call this operator as the \textbf{fusion operator}.

Now since $q^{\Omega}z_{(1)}^{\lambda}$ commutes with any block-diagonal operator. From the definition of $Y^{(\mbf{w}_1),(\mbf{w}_2)}(z)$ we can see that the fusion operator solves the following \textbf{wKZ equation}:
\begin{align}
z_{(1)}^{\lambda}(R_{0}^{-,(\mbf{w}_1),(\mbf{w}_2)})^{-1}q^{\Omega}H^{(\mbf{w}_1),(\mbf{w}_2)}(z)=H^{(\mbf{w}_1),(\mbf{w}_2)}(z)q^{\Omega}z_{(1)}^d
\end{align}

Here we prove the rationality of the fusion operator $H^{(\mbf{w}_1),(\mbf{w}_2)}(z)$, in this subsection we prove the rationality of the non-diagonal part.

\begin{lem}
For a given block diagonal operator $D^{(\mbf{w}_1),(\mbf{w}_2)}$ there exists unique lower triangular solution $J^{(\mbf{w}_1),(\mbf{w}_2)}(z)$ of the wKZ equation:
\begin{align}
z_{(1)}^{\lambda}(R_{0}^{-,(\mbf{w}_1),(\mbf{w}_2)})^{-1}q^{\Omega}J^{(\mbf{w}_1),(\mbf{w}_2)}(z)=J^{(\mbf{w}_1),(\mbf{w}_2)}(z)q^{\Omega}z_{(1)}^d
\end{align}
such that the diagonal part of $J^{(\mbf{w}_1),(\mbf{w}_2)}(z)$ is given by $D^{(\mbf{w}_1),(\mbf{w}_2)}(z)$.
\end{lem}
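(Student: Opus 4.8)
The plan is to solve the difference equation recursively with respect to the triangular grading coming from the block decomposition $X^A=\bigsqcup_{\mbf{v}_1+\mbf{v}_2=\mbf{v}}M(\mbf{v}_1,\mbf{w}_1)\times M(\mbf{v}_2,\mbf{w}_2)$, using crucially that there are only finitely many such blocks (since $\mbf{v}_1+\mbf{v}_2=\mbf{v}$ with $\mbf{v}_1,\mbf{v}_2\geq 0$). I would write any operator on $K(\mbf{w}_1)\otimes K(\mbf{w}_2)$ as a sum of graded components $A=\sum_\alpha A_\alpha$, where $A_\alpha$ sends the block $(\mbf{v}_1,\mbf{v}_2)$ to $(\mbf{v}_1-\alpha,\mbf{v}_2+\alpha)$. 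The requirement that $J^{(\mbf{w}_1),(\mbf{w}_2)}$ be lower triangular with diagonal $D^{(\mbf{w}_1),(\mbf{w}_2)}$ then reads $J_0=D^{(\mbf{w}_1),(\mbf{w}_2)}$ and $J_\alpha=0$ unless $\bm{\theta}\cdot\alpha<0$. Since by the earlier discussion $R_0^{-,(\mbf{w}_1),(\mbf{w}_2)}=1+\sum_{\bm{\theta}\cdot\alpha<0}R_\alpha$ is unipotent lower triangular, its inverse is again lower triangular, $(R_0^-)^{-1}=1+\sum_{\bm{\theta}\cdot\alpha<0}\tilde{R}_\alpha$, each $\tilde R_\alpha$ being a finite universal expression in the $R_\beta$.

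Next I would expand the equation $z_{(1)}^\lambda(R_0^-)^{-1}q^\Omega J=Jq^\Omega z_{(1)}^d$ into graded components. The operators $q^\Omega$, $z_{(1)}^\lambda$, $z_{(1)}^d$ are block diagonal and act as scalars (monomials in the Kahler variable times powers of $q$) on each block, so only $J$ and $(R_0^-)^{-1}$ shift the grading. Collecting the terms of total shift $\alpha$ yields
\begin{align}
z_{(1)}^\lambda q^\Omega J_\alpha-J_\alpha q^\Omega z_{(1)}^d=-\sum_{\substack{\beta+\gamma=\alpha\\ \bm{\theta}\cdot\beta<0}}z_{(1)}^\lambda\tilde R_\beta q^\Omega J_\gamma .
\end{align}
The degree-zero equation holds automatically: on a diagonal block the three diagonal operators are scalars, so the relation collapses to an identity and merely records $J_0=D^{(\mbf{w}_1),(\mbf{w}_2)}$. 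For $\bm{\theta}\cdot\alpha<0$ the right-hand side involves only $J_\gamma$ with $\bm{\theta}\cdot\gamma>\bm{\theta}\cdot\alpha$, i.e.\ strictly closer to the diagonal, hence already determined by the induction. Thus the recursion proceeds from the diagonal downward in the ample order and terminates after finitely many steps.

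The left-hand side is the value at $J_\alpha$ of the linear operator $L_\alpha(\cdot)=z_{(1)}^\lambda q^\Omega(\cdot)-(\cdot)q^\Omega z_{(1)}^d$. Because the diagonal operators act by scalars, $L_\alpha$ is simply multiplication of the matrix $J_\alpha$ by the scalar difference $c^{\mathrm{tar}}-c^{\mathrm{src}}$, where $c^{\mathrm{tar}}$ is the eigenvalue of $z_{(1)}^\lambda q^\Omega$ on the target block $(\mbf{v}_1-\alpha,\mbf{v}_2+\alpha)$ and $c^{\mathrm{src}}$ that of $q^\Omega z_{(1)}^d$ on the source block $(\mbf{v}_1,\mbf{v}_2)$. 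The main obstacle is precisely the invertibility of this difference, i.e.\ a non-resonance statement. Here $c^{\mathrm{tar}}$ and $c^{\mathrm{src}}$ are monomials in $z_{(1)}$ whose exponents differ by the nonzero pairing $\langle\lambda,\alpha\rangle$ (and whose $q$-powers differ by the codimension jump in $\Omega$ across the shift $\alpha$); since $\alpha\neq 0$ and we treat the Kahler variables as formal, the two monomials are genuinely distinct, so $c^{\mathrm{tar}}-c^{\mathrm{src}}$ is a nonzero element of the coefficient field and hence invertible there. Should a $z$-resonance $\langle\lambda,\alpha\rangle=0$ occur, I would fall back on the $q^\Omega$-jump to separate the two eigenvalues. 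Dividing by this scalar determines $J_\alpha$ uniquely, and termination of the finite recursion gives both existence and uniqueness of the lower triangular solution with prescribed diagonal $D^{(\mbf{w}_1),(\mbf{w}_2)}$.
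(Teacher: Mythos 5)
Your proposal is correct and takes essentially the same approach as the paper: both decompose operators into graded components along the block decomposition, derive a recursion proceeding downward from the diagonal with right-hand side involving only already-determined components, and solve each step by inverting a non-resonant block-scalar difference (the paper phrases this as inverting $\text{Ad}_{q^{-\Omega}z_{(1)}^{\lambda}}-1$ after conjugating the equation into the form $\text{Ad}_{q^{-\Omega}z_{(1)}^{\lambda}}(J)=\tilde{R}J$, which is equivalent to your Sylvester-type formulation). Your explicit non-resonance argument (distinct monomials in $z$, with the $q^{\Omega}$-jump as fallback) simply spells out what the paper leaves implicit.
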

\begin{proof}
We denote $\tilde{R}:=q^{-\Omega}(R_{0}^{-,(\mbf{w}_1),(\mbf{w}_2)})^{-1}q^{\Omega}$. We can see that $\tilde{R}$ is strictly lower triangular, and we have the wKZ equation of the form:
\begin{align}\label{wKZ-equation}
\text{Ad}_{q^{-\Omega}z_{(1)}^{\lambda}}(J^{(\mbf{w}_1),(\mbf{w}_2)}(z))=\tilde{R}J^{(\mbf{w}_1),(\mbf{w}_2)}(z)
\end{align}

Now if we take the $\bm{\alpha}$-th component of the equation we obtain:
\begin{align}
\text{Ad}_{q^{-\Omega}z_{(1)}^{\lambda}}(J^{(\mbf{w}_1),(\mbf{w}_2)}_{(\bm{\alpha})}(z))=J^{(\mbf{w}_1),(\mbf{w}_2)}_{(\bm{\alpha})}(z)+\cdots
\end{align}
such that the dots stand for terms $J^{(\mbf{w}_1),(\mbf{w}_2)}_{(\bm{\beta})}(z)$ with $\beta<\alpha$. By induction, all the term can be expressed via the diagonal term $D^{(\mbf{w}_1),(\mbf{w}_2)}(z)$.

Also note that if $D^{(\mbf{w}_1),(\mbf{w}_2)}(z)=1$, we denote the corresponding solution as $E^{(\mbf{w}_1),(\mbf{w}_2)}(z)$. It is easy to check that $J^{(\mbf{w}_1),(\mbf{w}_2)}_{(\bm{\alpha})}(z)=E^{(\mbf{w}_1),(\mbf{w}_2)}_{(\bm{\alpha})}(z)D^{(\mbf{w}_1),(\mbf{w}_2)}_{(\bm{\alpha})}(z)$
\end{proof}

\begin{prop}\label{rationality-for-wKZ}
The matrix coefficients of $E^{(\mbf{w}_1),(\mbf{w}_2)}_{(\bm{\alpha})}(z)$ is rational.
\end{prop}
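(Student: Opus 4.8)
The plan is to solve the wKZ equation recursively along the partial order and propagate rationality inductively. Writing the equation in the adjoint form from the previous lemma, $\text{Ad}_{q^{-\Omega}z_{(1)}^{\lambda}}(E^{(\mbf{w}_1),(\mbf{w}_2)}(z))=\tilde{R}\,E^{(\mbf{w}_1),(\mbf{w}_2)}(z)$ with $\tilde{R}=q^{-\Omega}(R_0^{-,(\mbf{w}_1),(\mbf{w}_2)})^{-1}q^{\Omega}$, I would first restrict to a fixed weight component of $K_{T}(M(\mbf{v},\mbf{w})^A)$ and extract the $\bm{\alpha}$-graded part. Since $\text{Ad}_{q^{-\Omega}z_{(1)}^{\lambda}}$ rescales the $\bm{\alpha}$-graded piece (the piece lowering the $A$-weight by the root $\bm{\alpha}$) by a single monomial $z_{(1)}^{h(\bm{\alpha})}$, where $h(\bm{\alpha})$ is linear in $\bm{\alpha}$ and is nonzero for $\bm{\alpha}\neq 0$ because $z_{(1)}$ is the Kähler variable dual to the $A$-grading direction, the equation becomes the recursion
\begin{equation}
(z_{(1)}^{h(\bm{\alpha})}-1)\,E^{(\mbf{w}_1),(\mbf{w}_2)}_{(\bm{\alpha})}(z)=\sum_{0<\bm{\gamma}\leq\bm{\alpha}}\tilde{R}_{(\bm{\gamma})}\,E^{(\mbf{w}_1),(\mbf{w}_2)}_{(\bm{\alpha}-\bm{\gamma})}(z),
\end{equation}
the sum running over the strictly lower-triangular part of $\tilde{R}$. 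As $z_{(1)}^{h(\bm{\alpha})}-1$ is then a nonzero Laurent polynomial, each step amounts to dividing a finite sum by such a polynomial, an operation that preserves rationality.

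The crux is to show that $\tilde{R}$, and hence each $\tilde{R}_{(\bm{\gamma})}$, has only finitely many nonzero graded pieces with rational, $z$-independent coefficients once restricted to the fixed component. By construction $R_0^{-}=\prod_w^{\leftarrow}R_w^{-}$ is an a priori infinite ordered product over the walls $w$ with $(\alpha_w,s)=0$, defined only in the formal power series topology in the spectral parameter. However, each factor $R_w^{-}=1+\sum_{\bm{\beta}}R^{-}_{w,\bm{\beta}}$ is strictly lower triangular and shifts the dimension vector only along positive multiples of the single root $\alpha_w$ attached to $w$, so $R^{-}_{w,\bm{\beta}}$ is nonzero only for $\bm{\beta}\in\mbb{Z}_{>0}\alpha_w$. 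Since we work at a fixed total dimension $\mbf{v}=\mbf{v}_1+\mbf{v}_2$, the admissible shifts $\bm{\gamma}$ lie in the finite set $\{\,0\leq\bm{\gamma}\leq\mbf{v}_1\,\}$: only finitely many walls $w$ satisfy $\alpha_w\leq\mbf{v}_1$, and for each fixed $\bm{\gamma}$ only finitely many ordered monomials in the $R^{-}_{w,\bm{\beta}}$ can sum to $\bm{\gamma}$. Hence, restricted to the component, $R_0^{-}$ truncates to a genuine finite product, its strictly lower part is nilpotent, so $(R_0^{-})^{-1}=1-N+N^2-\cdots$ truncates as well, and conjugation by $q^{\Omega}$ is harmless. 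Each surviving entry is a finite sum of products of wall $R$-matrix coefficients, which are integral $K$-theory classes in $K_{T}(M(\mbf{v},\mbf{w})^A\times M(\mbf{v},\mbf{w})^A)$, hence rational and independent of $z$. This infinite-to-finite reduction is exactly the step that is automatic for the Jordan quiver but must be argued in general, and I expect it to be the main obstacle.

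With these two inputs the rationality of $E^{(\mbf{w}_1),(\mbf{w}_2)}_{(\bm{\alpha})}(z)$ follows by induction on $\bm{\alpha}$ in the ample partial order. The base case $E^{(\mbf{w}_1),(\mbf{w}_2)}_{(0)}(z)=1$ is rational. Assuming rationality of $E^{(\mbf{w}_1),(\mbf{w}_2)}_{(\bm{\beta})}(z)$ for all $\bm{\beta}<\bm{\alpha}$, the right-hand side of the recursion is a finite sum of products of $z$-independent rational coefficients $\tilde{R}_{(\bm{\gamma})}$ with rational functions of $z$, hence rational; dividing by the nonvanishing Laurent polynomial $z_{(1)}^{h(\bm{\alpha})}-1$ again yields a rational function. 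This establishes the rationality of every matrix coefficient of $E^{(\mbf{w}_1),(\mbf{w}_2)}_{(\bm{\alpha})}(z)$, completing the proof.
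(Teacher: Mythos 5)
Your proposal follows essentially the same route as the paper's proof: the same recursion obtained by extracting the $\bm{\alpha}$-graded part of the wKZ equation and inverting $\mathrm{Ad}_{q^{-\Omega}z_{(1)}^{\lambda}}-1$, together with the same key finiteness argument that, on a component of fixed dimension vector, only walls whose root is dominated by the dimension vector contribute to $R_0^{-}=\prod_w^{\leftarrow}R_w^{-}$, truncating the infinite product to a finite one with $z$-independent rational entries. Your added remarks (nonvanishing of the monomial $z_{(1)}^{h(\bm{\alpha})}-1$ and nilpotency of the strictly triangular part, so that $(R_0^{-})^{-1}$ also truncates) are correct and merely make explicit steps the paper leaves implicit.
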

\begin{proof}
By the equation \ref{wKZ-equation} we have that
\begin{align}
\text{Ad}_{q^{-\Omega}z_{(1)}^{\lambda}}(E^{(\mbf{w}_1),(\mbf{w}_2)}(z))=\tilde{R}E^{(\mbf{w}_1),(\mbf{w}_2)}(z)
\end{align}

We write down $\tilde{R}=1+\sum_{\theta\cdot\alpha>0}R_{\alpha}$, $E^{(\mbf{w}_1),(\mbf{w}_2)}(z)=\sum_{\theta\cdot\alpha>0}E^{(\mbf{w}_1),(\mbf{w}_2)}_{\alpha}(z)$, and now we have that:
\begin{align}
\text{Ad}_{q^{-\Omega}z_{(1)}^{\lambda}}(E^{(\mbf{w}_1),(\mbf{w}_2)}_{\alpha}(z))-E^{(\mbf{w}_1),(\mbf{w}_2)}_{\alpha}(z)=\sum_{\substack{\alpha_1+\alpha_2=\alpha\\\alpha_2\neq\alpha}}\tilde{R}_{\alpha_1}E^{(\mbf{w}_1),(\mbf{w}_2)}_{\alpha_2}(z)
\end{align}
which implies the following recursive formula:
\begin{align}
E^{(\mbf{w}_1),(\mbf{w}_2)}_{\alpha}(z)=\frac{1}{\text{Ad}_{q^{-\Omega}z_{(1)}^{\lambda}}-1}\sum_{\substack{\alpha_1+\alpha_2=\alpha\\\alpha_2\neq\alpha}}\tilde{R}_{\alpha_1}E^{(\mbf{w}_1),(\mbf{w}_2)}_{\alpha_2}(z)
\end{align}

It remains to prove that $R_{\alpha}$ has rational coefficients. Since the matrix coefficients of each wall $R$-matrix $R_{w}^{\pm}$ is monomial in $a$ and rational in $t_e,q$. It remains to show that when restricting to $\bigoplus_{\mbf{v}_1+\mbf{v}_2=\mbf{v}}K_{T}(M(\mbf{v}_1,\mbf{w}_1))\otimes K_{T}(M(\mbf{v}_2,\mbf{w}_2))$ with fixed $\mbf{v}$ and $\mbf{w}_1+\mbf{w}_2$, $R_{0}^{-}=\prod^{\leftarrow}_{0\in w}R_{w}^{-}$  contains only finitely many terms in the product.

This can be seen from the following fact: Since the wall $w$ is determined by the root $(\alpha,n)\in\mbb{N}^I\times\mbb{Z}$ such that $w$ corresponds to the hyperplane $(s,\alpha)+n=0$. For the wall $w$ in $R_{0}^-$, it corresponds to the wall $(s,\alpha)=0$, and the nontrivial elements in $R_{w}^{-}$ corresponding to the vector $\alpha\in\mbb{N}^I$ satisfies that it sends $K_{T}(M(\mbf{v}_1,\mbf{w}_1))\otimes K_{T}(M(\mbf{v}_2,\mbf{w}_2))$ to $K_{T}(M(\mbf{v}_1+k\alpha,\mbf{w}_1))\otimes K_{T}(M(\mbf{v}_2-k\alpha,\mbf{w}_2))$ for $k>0$. There exists maximal $\alpha$ such that $\alpha\leq\mbf{v}$. Thus we can see that only those wall $R$-matrices $R_{w}^{-}$ satisfying that the corresponding root $(\alpha,0)$ is no bigger than $\mbf{v}$ can survive. This proves the finiteness of the terms in the product. Thus we conclude the proposition.
\end{proof}

In the next subsection we are going to prove that 
for the fusion operator $H^{(\mbf{w}_1),(\mbf{w}_2)}(z)$ its corresponding diagonal part $D^{(\mbf{w}_1),(\mbf{w}_2)}(z)$ is equal to one.

\subsection{Diagonal part of $H^{(\mbf{w}_1),(\mbf{w}_2)}(z)$}

\begin{prop}\label{digaonal-argument}
The diagonal part $D^{(\mbf{w}_1),(\mbf{w}_2)}(z)$ of the operator $H^{(\mbf{w}_1),(\mbf{w}_2)}(z)$ is the identity operator.
\end{prop}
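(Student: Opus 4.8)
The plan is to reduce the block-diagonal part $D^{(\mbf{w}_1),(\mbf{w}_2)}(z)$ of $H^{(\mbf{w}_1),(\mbf{w}_2)}(z)$ to the block-diagonal part of $Y^{(\mbf{w}_1),(\mbf{w}_2)}(z)=\lim_{a\to0}\mbf{J}^{(\mbf{w}_1),(\mbf{w}_2)}(z)$, and then to evaluate the latter from a factorisation of the capping operator on a single fixed component. Write $(\,\cdot\,)_{\mathrm{diag}}$ for the block-diagonal part relative to the decomposition $X^A=\sqcup_\alpha F_\alpha$. First I would use that the self-restriction of a stable envelope to a diagonal block is
\[
\text{Stab}_{\mc{C},s}|_{F_{\alpha}\times F_{\alpha}}=(-1)^{\text{rk }T_{>0}^{1/2}}\Big(\frac{\text{det}(\mc{N}_{-})}{\text{det}\,T_{\neq0}^{1/2}}\Big)^{1/2}\otimes\mc{O}_{\text{Attr}}|_{F_{\alpha}\times F_{\alpha}},
\]
which does not depend on the slope, so the diagonals of $\text{Stab}_{+,T^{1/2},s}$ and $\text{Stab}_{+,T^{1/2},\infty}$ coincide; call this common block-diagonal operator $\text{Stab}_{\mathrm{diag}}$. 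Since $i^*\text{Stab}_{+,T^{1/2},\infty}$ and $\text{Stab}_{+,T^{1/2},s}^{-1}(i^*)^{-1}$ are triangular for the ample ordering of the components and $Y^{(\mbf{w}_1),(\mbf{w}_2)}(z)$ is lower-triangular, the diagonal of the triple product is the product of the three diagonals, giving
\[
D^{(\mbf{w}_1),(\mbf{w}_2)}(z)=\text{Stab}_{\mathrm{diag}}^{-1}\,\big(Y^{(\mbf{w}_1),(\mbf{w}_2)}(z)\big)_{\mathrm{diag}}\,\text{Stab}_{\mathrm{diag}}.
\]
In particular, once $\big(Y^{(\mbf{w}_1),(\mbf{w}_2)}(z)\big)_{\mathrm{diag}}=\mathrm{id}$ is established, the conjugation is trivial and $D^{(\mbf{w}_1),(\mbf{w}_2)}(z)=\mathrm{id}$.

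Next I would extract $\big(Y^{(\mbf{w}_1),(\mbf{w}_2)}(z)\big)_{\mathrm{diag}}$ from the identity
\[
i^*\Psi^{(\mbf{w}_1+\mbf{w}_2)}(z,0)(i^*)^{-1}=Y^{(\mbf{w}_1),(\mbf{w}_2)}(z)\,q^{\Omega}\,\Psi^{(\mbf{w}_1),(\mbf{w}_2)}\big(zq^{\frac{\Delta\mbf{w}_2-C\mbf{v}_2}{2}},zq^{\frac{C\mbf{v}_1-\Delta\mbf{w}_1}{2}}\big)
\]
recorded above. As $q^{\Omega}$ and $\Psi^{(\mbf{w}_1),(\mbf{w}_2)}$ are block-diagonal and $Y^{(\mbf{w}_1),(\mbf{w}_2)}$ is lower-triangular, taking block-diagonal parts yields
\[
\big(i^*\Psi^{(\mbf{w}_1+\mbf{w}_2)}(z,0)(i^*)^{-1}\big)_{\mathrm{diag}}=\big(Y^{(\mbf{w}_1),(\mbf{w}_2)}(z)\big)_{\mathrm{diag}}\,q^{\Omega}\,\Psi^{(\mbf{w}_1),(\mbf{w}_2)}(\cdots).
\]
Hence $\big(Y^{(\mbf{w}_1),(\mbf{w}_2)}(z)\big)_{\mathrm{diag}}=\mathrm{id}$ is equivalent to the factorisation statement that the block-diagonal part of the big capping operator restricted to $X^A$ equals $q^{\Omega}\,\Psi^{(\mbf{w}_1),(\mbf{w}_2)}\big(zq^{\frac{\Delta\mbf{w}_2-C\mbf{v}_2}{2}},zq^{\frac{C\mbf{v}_1-\Delta\mbf{w}_1}{2}}\big)$.

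This factorisation is the heart of the argument, and I would prove it in parallel with Proposition \ref{factorisation-vertex} and the index-limit formula \ref{index-limit-formula}. On a block $F_\alpha\times F_\alpha$ with $F_\alpha=M(\mbf{v}_1,\mbf{w}_1)\times M(\mbf{v}_2,\mbf{w}_2)$, the contributions to $i^*\Psi^{(\mbf{w}_1+\mbf{w}_2)}(i^*)^{-1}$ come from relative quasimaps whose values at $p_1$ and $p_2$ lie in $F_\alpha$; by the $\mbb{C}^*_p$-fixed-point description \ref{fixed-decomposition} these factor through $QM(M(\mbf{v}_1,\mbf{w}_1))\times QM(M(\mbf{v}_2,\mbf{w}_2))$. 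Using the splitting $\mc{T}^{1/2}_{X_1\times X_2}=\mc{T}^{1/2}_{X_1}+\mc{T}^{1/2}_{X_2}$ of the polarisation and the multiplicativity of $\wedge^*(T_{vir,moving})$ exactly as in the proof of Proposition \ref{factorisation-vertex}, together with the index limit that produces the prefactor $q^{\Omega}$ and the Kahler shifts dictated by the polarisations \ref{polarisation-1} and \ref{polarisation-2}, I would identify this block with $q^{\Omega}\,\Psi^{(\mbf{w}_1),(\mbf{w}_2)}\big(zq^{\frac{\Delta\mbf{w}_2-C\mbf{v}_2}{2}},zq^{\frac{C\mbf{v}_1-\Delta\mbf{w}_1}{2}}\big)$, which is exactly the block-diagonal factor divided out in forming $\mbf{J}^{(\mbf{w}_1),(\mbf{w}_2)}(z)$. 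This forces $\big(Y^{(\mbf{w}_1),(\mbf{w}_2)}(z)\big)_{\mathrm{diag}}=\mathrm{id}$, and hence $D^{(\mbf{w}_1),(\mbf{w}_2)}(z)=\mathrm{id}$.

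The main obstacle is precisely this capping-operator factorisation. Unlike the bare vertex, the capping operator is assembled from relative quasimaps together with the gluing at $p_1,p_2$, so I must verify that the relative boundary and gluing data respect the product decomposition of $F_\alpha$ and that virtual localisation annihilates all cross terms between the two factors. The delicate point is tracking the $q^{\Omega}$ normalisation and checking that the induced Kahler shifts are exactly $zq^{\frac{\Delta\mbf{w}_2-C\mbf{v}_2}{2}}$ and $zq^{\frac{C\mbf{v}_1-\Delta\mbf{w}_1}{2}}$, so that the divided-out operator matches the $\Psi^{(\mbf{w}_1),(\mbf{w}_2)}$ appearing in the definition of $\mbf{J}^{(\mbf{w}_1),(\mbf{w}_2)}(z)$.
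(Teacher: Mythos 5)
Your reduction steps are sound and follow the same route as the paper: both arguments boil the proposition down to the single statement that the block-diagonal part of the restricted big capping operator factorises as $\bigl(i^*\Psi_{X}(z)(i^*)^{-1}\bigr)_{\mathrm{diag}}=q^{\Omega}\,\Psi_{X^A}\bigl(zq^{\frac{\Delta\mbf{w}_2-C\mbf{v}_2}{2}},zq^{\frac{C\mbf{v}_1-\Delta\mbf{w}_1}{2}}\bigr)$, and your triangularity bookkeeping through $Y^{(\mbf{w}_1),(\mbf{w}_2)}(z)$ is an acceptable variant of the paper's use of the block-diagonality of $\lim_{a\to0}a^{\langle\det T^{1/2}_{>0},\sigma\rangle}i^*\text{Stab}_{+,T^{1/2},s}$. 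The problem is that this factorisation --- which you yourself flag as ``the main obstacle'' --- is exactly where the mathematical content of the proposition lies, and your sketch of it assumes the conclusion. You assert that the diagonal-block contributions ``come from relative quasimaps whose values at $p_1$ and $p_2$ lie in $F_\alpha$'' and hence factor through $QM(M(\mbf{v}_1,\mbf{w}_1))\times QM(M(\mbf{v}_2,\mbf{w}_2))$. But the capping operator is computed by $A$-equivariant localisation on a \emph{relative} quasimap space, whose $A$-fixed loci are chains of rational curves; a chain can have both endpoints on $F_\alpha$ (hence contribute to a diagonal block) while its intermediate components sweep through other fixed components $F_\beta$. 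Proposition \ref{factorisation-vertex} gives no control over these configurations, because for the bare vertex the domain is irreducible by definition; so ``virtual localisation annihilates all cross terms'' is at this point an assumption, not an argument.

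The paper's proof supplies precisely this missing step. It rewrites the capping operator over the moduli space $QM^{\sim}_{\text{rel }p_2,p_1}$ of quasimaps whose domain is a chain joining $p_1$ and $p_2$, with the insertion $\frac{1}{1-p^{-1}\psi_{p_1}}$, and performs virtual $A$-localisation there. The virtual normal bundle then splits as $N_{vir}=N_{vir}^{\text{fixed domain}}+\bigoplus_{\text{broken nodes }p}T_pC_i'\otimes T_pC_{i+1}'$, and the crux is the equivalence: a fixed quasimap contributes to a diagonal block if and only if its normal bundle has no broken-node part, if and only if the domain can be smoothed to $\mbb{P}^1$ with $p_1,p_2$ on one component and the automorphism $\gamma$ trivial, if and only if it is an honest quasimap in $QM(F_\alpha)$. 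Only after this identification does the computation collapse to the index-limit computation for the bare vertex, which is what produces the prefactor $q^{\Omega}$ and the Kahler shifts you want, and then the invertibility of $\Psi^{(\mbf{w}_1),(\mbf{w}_2)}$ forces $D^{(\mbf{w}_1),(\mbf{w}_2)}(z)=1$. To complete your proposal you would need to prove this no-broken-nodes characterisation of the diagonal blocks (or an equivalent statement); as written, the proposal reorganises the reduction correctly but leaves the proposition's essential step unproven.
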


\begin{proof}
The proof is based on the virtual localisation for the capping operator, namely, we can consider the capping operator in the following form:
\begin{align}
\sum_{\mbf{d}}z^{\mbf{d}}\hat{\text{ev}}_{*}(\hat{\mc{O}_{vir}^{\mbf{d}}}\frac{1}{1-p^{-1}\psi_{p_1}})
\end{align}

Here the evaluation map is on the quasimap moduli space $QM_{\text{rel }p_2,p_1}^{\sim}$ of stable quasimaps with whose domain $C$ is a chain of rational curves joining $p_1$ and $p_2$. The connection with the definition in \ref{capping} is given by the $\mbb{C}_p^*$-localisation of the above operator.

Now we further do the $A$-localisation, namely we consider a quasimap $f\in(QM_{\text{rel }p_1',p_1}^{\sim}(X))^A$ and let $C'$ be the domain of the curve $f$. Since $f$ is fixed, there exists 
\begin{align}
\gamma:A\rightarrow\text{Aut}(C',p_2',p_1')
\end{align}
such that $af=f\gamma(a)$ for $a\in A$. In this case the virtual normal bundle is written as:
\begin{align}
N_{vir}=N_{vir}^{\text{fixed domain}}+\bigoplus_{\text{broken nodes p}}T_pC_i'\otimes T_pC_{i+1}'
\end{align}

Now in the case that we want to restrict to the block diagonal operator part of $\Psi$ from $K_{T}(F_{\alpha})$ to $K_{T}(F_{\alpha})$. If $f\in QM(F_{\alpha})$, the corresponding virtual normal bundle $N_{vir}$ has no broken nodes part. If the virtual normal bundle has no broken nodes part, this means that the domain curve $C'$ can be smoothened to such that $C'\cong\mbb{P}^1$ and so $p_1$ and $p_2$ are all in just one component. In this case we can see that $\gamma$ can only be trivial in this case, and thus the corresponding quasimap $f\in QM(F_{\alpha})$. Thus the argument has been reduced to the one of the index limit for the bare vertex functions. 

So for now the corresponding block diagonal part, the virtual normal bundle is equal to:
\begin{align}
N_{vir}=N_{vir}^{\text{fixed domain}}=T^{1/2}_{f(p_1),moving}+T^{1/2}_{f(p_2),moving}+\cdots
\end{align}
The rest of the computation is the same as the one in the index limit for the vertex function. For now we have that:
\begin{align}
i^*\Psi_{X}(z)((i^*)^{-1}(\alpha\otimes\beta))=q^{\Omega}\Psi_{X^A}(\alpha\otimes\beta)+\cdots\text{When restricted to the diagonal part}
\end{align}

Now we take $a\rightarrow0$ and:
\begin{equation}
\begin{aligned}
&\lim_{a\rightarrow0}\text{Stab}_s^{-1}\Psi^{(\mbf{w}_1+\mbf{w}_2)}(z,a)\text{Stab}_{\infty}=(i^*\text{Stab}_s)^{-1}i^*\Psi^{(\mbf{w}_1+\mbf{w}_2)}(z,0)(i^*)^{-1}(i^*\text{Stab}_{\infty})\\
=&H^{(\mbf{w}_1),(\mbf{w}_2)}(z)q^{\Omega}(i^*\text{Stab}_{\infty})^{-1}\Psi^{(\mbf{w}_1),(\mbf{w}_2)}(zq^{\frac{\Delta\mbf{w}_2-C\mbf{v}_2}{2}}, zq^{\frac{C\mbf{v}_1-\Delta\mbf{w}_1}{2}})(i^*\text{Stab}_{\infty})
\end{aligned}
\end{equation}

Now we use the fact that the operator
\begin{align}
\lim_{a\rightarrow0}a^{\langle\text{det}(T^{1/2}_{>0}),\sigma\rangle}i^*\text{Stab}_{+,T^{1/2},s}=\lim_{a\rightarrow0}a^{\langle\text{det}(T^{1/2}_{>0}),\sigma\rangle}i^*\text{Stab}_{+,T^{1/2},\infty}
\end{align}
is a block diagonal operator, for details of the result one can refer to the exercise $10.2.14$ in \cite{O15}. Thus via computation we further have that:
\begin{align}
i^*\Psi^{(\mbf{w}_1+\mbf{w}_2)}(z,0)(i^*)^{-1}=\mc{L}H^{(\mbf{w}_1),(\mbf{w}_2)}(z)q^{\Omega}\mc{L}^{-1}\Psi^{(\mbf{w}_1),(\mbf{w}_2)}(zq^{\frac{\Delta\mbf{w}_2-C\mbf{v}_2}{2}}, zq^{\frac{C\mbf{v}_1-\Delta\mbf{w}_1}{2}})
\end{align}
Here $\mc{L}$ is some line bundle that comes from the index limit $a\rightarrow0$ of $i^*\text{Stab}_{s}$.
Restricted to the diagonal part and we have that:
\begin{align}
\Psi^{(\mbf{w}_1),(\mbf{w}_2)}(zq^{\frac{\Delta\mbf{w}_2-C\mbf{v}_2}{2}}, zq^{\frac{C\mbf{v}_1-\Delta\mbf{w}_1}{2}})=D^{(\mbf{w}_1),(\mbf{w}_2)}(z)\Psi^{(\mbf{w}_1),(\mbf{w}_2)}(zq^{\frac{\Delta\mbf{w}_2-C\mbf{v}_2}{2}}, zq^{\frac{C\mbf{v}_1-\Delta\mbf{w}_1}{2}})
\end{align}

Thus the proof is finished.
\end{proof}

\subsection{Adjoint operator $H^{(\mbf{w}_1),(\mbf{w}_2),\dagger}(z)$}

First we have the following observation: The capping operator $\Psi:K_{T}(X)\rightarrow K_{T}(X)_{loc}[[z]]$ is given by a $K$-theory class in $K_{T}(X\times X)_{loc}[[z]]$. In the similar manner, the operator $\text{Stab}_{s}^{-1}\circ\Psi\circ\text{Stab}_{\infty}$ is represented by a $K$-theory class in $K_{T}(X^A\times X^A)_{loc}[[z]]$ via the convolution composition of the $K$-theory classes. Thus if we choose the block-diagonal part of the operator $\text{Stab}_{s}^{-1}\circ\Psi\circ\text{Stab}_{\infty}$, it is represented by a $K$-theory class in $\bigoplus_{\alpha}K_{T}(F_{\alpha}\times F_{\alpha})_{loc}[[z]]$.

It is convenient for us to first write down adjoint of the capping operator after doing the conjugation:
\begin{equation}\label{reduction-of-adjoint-capping}
\begin{aligned}
&\lim_{a\rightarrow0}\text{Stab}_{s}^{-1}\Psi^{\dagger}\text{Stab}_{\infty}(\alpha\otimes\beta)=\lim_{a\rightarrow0}\text{Stab}_s^{-1}\text{Vertex}_{X}^{g(\text{Stab}_{\infty}(\alpha\otimes\beta))}\\
=&\lim_{a\rightarrow0}(i^*\circ\text{Stab}_s)^{-1}[\underbrace{(\cdots)}_{\text{Coefficients as in \ref{index-limit}}}\text{Vertex}_{X^A}^{(i^*g(\text{Stab}_{\infty}(\alpha\otimes\beta))}]
\end{aligned}
\end{equation}

The second line comes from the virtual localisation, which is also used in the argument of the index limit of the vertex function. This implies that we can transfer the computation to the vertex function with descendents $i^*g(\text{Stab}_{\infty}(\alpha\otimes\beta)$.

For now we need to compute the block-diagonal part of the operator $\lim_{a\rightarrow0}\text{Stab}_{s}^{-1}\Psi^{\dagger}\text{Stab}_{\infty}$. By the above computation, it remains to compute the following limit:
\begin{align}
\lim_{a\rightarrow0}\text{Stab}_{s}^{-1}g(\text{Stab}_{\infty}(\alpha\otimes\beta))
\end{align}

But also note that up to some scaling of $a$, the operator
\begin{align}
\lim_{a\rightarrow0}a^{\langle\text{det}(T^{1/2}_{>0}),\sigma\rangle}\text{Stab}_{+,T^{1/2},s}=\lim_{a\rightarrow0}a^{\langle\text{det}(T^{1/2}_{>0}),\sigma\rangle}\text{Stab}_{+,T^{1/2},\infty}
\end{align}
is a block diagonal operator, for details of the result one can refer to the exercise $10.2.14$ in \cite{O15}. Now we have that:
\begin{equation}
\begin{aligned}
&\lim_{a\rightarrow0}\text{Vertex}_{X}(\text{Stab}_{s}^{-1}g(\text{Stab}_{\infty}(\alpha\otimes\beta)))(z)=\lim_{a\rightarrow0}\text{Stab}_{s}^{-1}\text{Vertex}_{X}(g(\text{Stab}_{s}(\alpha\otimes\beta)))(z)
\end{aligned}
\end{equation}

Now consider the following commutative diagram:
\begin{equation}
\begin{tikzcd}
K_{T\times G'}(M(\mbf{v},\mbf{w}))\arrow[r,"\text{Stab}_{U}"]&K_{T\times G'}(M(\mbf{v},\mbf{w}+\mbf{v}))\\
\oplus_{\mbf{v}_1+\mbf{v}_2=\mbf{v}}K_{T\times G'}(M(\mbf{v}_1,\mbf{w}_1)\times M(\mbf{v}_2,\mbf{w}_2))\arrow[r,"\text{Stab}_{U_1}\otimes\text{Stab}_{U_2}"]\arrow[u,"\text{Stab}_{s}"]&\oplus_{\mbf{v}_1+\mbf{v}_2=\mbf{v}}K_{T}(M(\mbf{v}_1,\mbf{v}_1+\mbf{w}_1)\times M(\mbf{v}_2,\mbf{v}_2+\mbf{w}_2))\arrow[u,"\text{Stab}_{s,U}"]
\end{tikzcd}
\end{equation}

Here the stable envelope $\text{Stab}_{U}$ is the stable envelope used in \ref{stable-nonabelian-stable}. 
The commutativity of the diagram comes from the uniqueness of the stable envelopes.

Now we use the notation $\text{Stab}_{\infty}=\frac{1}{\wedge^*(N_{X^A}^-)}i^*$ such that all of these classes lives in $K_{T}(\mc{R}_{\mbf{v}})_{loc}$. So now we have that

\begin{equation}
\begin{aligned}
&\lim_{a\rightarrow0}\text{Stab}_{\infty}^{-1}g(\text{Stab}_{\infty}(\alpha\otimes\beta))=\lim_{a\rightarrow0}\text{Stab}_{s}^{-1}g(\text{Stab}_{s}(\alpha\otimes\beta))\\
=&\lim_{a\rightarrow0}\text{Stab}_{s}^{-1}\frac{1}{\wedge^*(q^{-1}\mf{g}_{\mbf{v}})}j^*\text{Stab}_{U}\circ\text{Stab}_s(\alpha\otimes\beta)\\
=&\lim_{a\rightarrow0}\text{Stab}_s^{-1}\frac{1}{\wedge^*(q^{-1}\mf{g}_{\mbf{v}})}j^*\text{Stab}_{s,U}\circ(\text{Stab}_{U_1}(\alpha)\otimes\text{Stab}_{U_2}(\beta))
\end{aligned}
\end{equation}
Now if we restrict the formula to the block diagonal, we have that the block diagonal part should be equal to:
\begin{equation}
\begin{aligned}
&\lim_{a\rightarrow0}\text{Stab}_{\infty}^{-1}\frac{1}{\wedge^*(q^{-1}\mf{g}_{\mbf{v}}[1])\wedge^*(q^{-1}\mf{g}_{\mbf{v}}[-1])}\frac{1}{\wedge^*(q^{-1}\mf{g}_{\mbf{v}_1})\wedge^*(q^{-1}\mf{g}_{\mbf{v}_2})}j^*\text{Stab}_{\infty,U}\circ(\text{Stab}_{U_1}(\alpha)\otimes\text{Stab}_{U_2}(\beta))\\
=&\lim_{a\rightarrow0}\frac{1}{\wedge^*(N_{X^A}^-)}i^*\frac{1}{\wedge^*(q^{-1}\mf{g}_{\mbf{v}}[1])\wedge^*(q^{-1}\mf{g}_{\mbf{v}}[-1])}\frac{1}{\wedge^*(q^{-1}\mf{g}_{\mbf{v}_1})\wedge^*(q^{-1}\mf{g}_{\mbf{v}_2})}\times\\
&\times j^*(i^*_U)^{-1}(\wedge^*(N_{X'^A}^-)\text{Stab}_{U_1}(\alpha)\otimes\text{Stab}_{U_2}(\beta))
\end{aligned}
\end{equation}

Here we use the fact that $\lim_{a\rightarrow0}a^{\langle\text{det}(T^{1/2}_{>0}),\sigma\rangle}\text{Stab}_{+,T^{1/2},s}$ exists and it is block-triangular, and 
\begin{equation}
\begin{aligned}
&\mc{N}_{M(\mbf{v}_1,\mbf{v}_1+\mbf{w}_1)\times M(\mbf{v}_2,\mbf{v}_2+\mbf{w}_2)}^-=\mc{N}_{M(\mbf{v}_1,\mbf{w}_1)\times M(\mbf{v}_2,\mbf{w}_2)}^-+\sum_{i\in I}(\frac{\mc{V}_i''}{(q\mc{V}_i')'}+\frac{(\mc{V}_i')''}{\mc{V}_i'})\\
&\mf{g}_{\mbf{v}}[1]=\bigoplus_{i\in I}q^{-1}\text{Hom}(\mc{V}_i',\mc{V}_i''),\qquad\mf{g}_{\mbf{v}}[-1]=\bigoplus_{i\in I}q^{-1}\text{Hom}(\mc{V}_i'',\mc{V}_i')
\end{aligned}
\end{equation}

For the contribution $\wedge^*(q^{-1}\mf{g}_{\mbf{v}}[1])$, it goes to $1$ when $a\rightarrow0$.
Note that the map $j^*$ in \ref{restriction-nonabelian} identifies $\mc{V}'$ with $\mc{V}$, and now we have that:
\begin{equation}
\lim_{a\rightarrow0}j^*\wedge^*\mc{N}_{M(\mbf{v}_1,\mbf{v}_1+\mbf{w}_1)\times M(\mbf{v}_2,\mbf{v}_2+\mbf{w}_2)}^-=\lim_{a\rightarrow0}\wedge^*\mc{N}_{M(\mbf{v}_1,\mbf{w}_1)\times M(\mbf{v}_2,\mbf{w}_2)}^-\wedge^*(\mf{g}_{\mbf{v}}[1])\wedge^*(q^{-1}\mf{g}_{\mbf{v}}[-1])
\end{equation}

Thus we have that
\begin{align}
\lim_{a\rightarrow0}\frac{\wedge^*(q^{-1}\mf{g}_{\mbf{v}}[-1])}{\wedge^*(q^{-1}\mf{g}_{\mbf{v}}[-1])}=1
\end{align}

Thus we can cancel the term contributed by $\text{Stab}_{\infty}^{-1}$, $\text{Stab}_{\infty,U}$ and $1/\wedge^*(q\mf{g}_{\mbf{v}}[-1])$. Also note that $\text{Stab}_{U_1}(\alpha)\otimes\text{Stab}_{U_2}(\beta)$ descends to the class in $Y_{\mbf{v}_1,\mbf{v}_1+\mbf{w}_1}/G_{\mbf{v}_1}\times Y_{\mbf{v}_2,\mbf{v}_2+\mbf{w}_2}/G_{\mbf{v}_2}=M(\mbf{v}_1,\mbf{v}_1+\mbf{w}_1)_{iso}/G_{\mbf{v}_1}'\times M(\mbf{v}_2,\mbf{v}_2+\mbf{w}_2)_{iso}/G_{\mbf{v}_2}'$. It can be illustrated by the following diagram:
\begin{equation}
\begin{tikzcd}
K_T(M(\mbf{v},\mbf{v}+\mbf{w}))\arrow[d,"i_U^*"]\arrow[r,"j^*"]&K_{T}(\mc{R}_{\mbf{v}})\arrow[d,"i^*"]\\
K_{T}(M(\mbf{v}_1,\mbf{w}_1+\mbf{v}_1)\times M(\mbf{v}_2,\mbf{w}_2+\mbf{v}_2))\arrow[r,"(j_1\times j_2)^*"]&K_{T}(\mc{R}_{\mbf{v}_1}\times\mc{R}_{\mbf{v}_2})
\end{tikzcd}
\end{equation}
So we have that 
\begin{equation}
\begin{aligned}
&\lim_{a\rightarrow0}\frac{1}{\wedge^*(N_{X^A}^-)}i^*\frac{1}{\wedge^*(q\mf{g}_{\mbf{v}}[1])\wedge^*(q\mf{g}_{\mbf{v}}[-1])}\frac{1}{\wedge^*(q\mf{g}_{\mbf{v}_1})\wedge^*(q\mf{g}_{\mbf{v}_2})}j^*(i^*_U)^{-1}(\wedge^*(N_{X'^A}^-)\text{Stab}_{U_1}(\alpha)\otimes\text{Stab}_{U_2}(\beta))\\
=&\lim_{a\rightarrow0}\frac{\mc{L}^{-1}}{\wedge^*(q\mf{g}_{\mbf{v}_1})\wedge^*(q\mf{g}_{\mbf{v}_2})}i^*j^*(i^*_U)^{-1}(\mc{L}\text{Stab}_{U_1}(\alpha)\otimes\text{Stab}_{U_2}(\beta))\\
=&\frac{1}{\wedge^*(q\mf{g}_{\mbf{v}_1})\wedge^*(q\mf{g}_{\mbf{v}_2})}(j_1\times j_2)^*(\text{Stab}_{U_1}(\alpha)\otimes\text{Stab}_{U_2}(\beta))=g_{\alpha}\otimes g_{\beta}
\end{aligned}
\end{equation}

So we have that as $a\rightarrow0$, the diagonal part of $i^*(g(\text{Stab}_{s}(\alpha\otimes\beta)))$ can be written as:
\begin{align}
\lim_{a\rightarrow0}a^{(\cdots)}i^*(g(\text{Stab}_{\infty}(\alpha\otimes\beta)))=\lim_{a\rightarrow0}a^{(\cdots)}\wedge^*(N_{X^A}^-)(g_{\alpha}\otimes g_{\beta})\in K_{T}(\mc{R}_{\mbf{v}_1}\times\mc{R}_{\mbf{v}_2})
\end{align}

while the degree for $\wedge^*(N_{X^A}^-)$ coincides with the degree of $\text{Stab}_{\infty}$, and it will become a line bundle $\mc{L}$ under the limit, and so we have furthermore reduction of \ref{reduction-of-adjoint-capping} that:
\begin{equation}
\begin{aligned}
&\lim_{a\rightarrow0}\text{Stab}_{\infty}^{-1}\Psi^{\dagger}_{X}\text{Stab}_{s}(\alpha\otimes\beta)=\Psi^{\dagger}_{X^A}(\alpha\otimes\beta)(zq^{\frac{\Delta\mbf{w}_2-C\mbf{v}_2}{2}}, zq^{\frac{C\mbf{v}_1-\Delta\mbf{w}_1}{2}})\text{ When restricted to the diagonal}
\end{aligned}
\end{equation}

So now we define the operator:
\begin{align}
\mbf{J}^{(\mbf{w}_1),(\mbf{w}_2),\dagger}=(q^{\Omega}\Psi^{\dagger}_{X^A}(zq^{\frac{\Delta\mbf{w}_2-C\mbf{v}_2}{2}}, zq^{\frac{C\mbf{v}_1-\Delta\mbf{w}_1}{2}})^{-1}i^*\Psi_{X}^{\dagger}(z)(i^*)^{-1}
\end{align}

Furthermore we define that:
\begin{align}
H^{(\mbf{w}_1),(\mbf{w}_2),\dagger}:=\lim_{a\rightarrow0}\text{Stab}_{\infty}^{-1}\mbf{J}^{(\mbf{w}_1),(\mbf{w}_2),\dagger}\text{Stab}_{s}
\end{align}

Now we use the following identity:
\begin{align}
\Psi_{X}(z)\text{Vertex}^{(g_{\alpha})}_{X}(z)=\mbf{G}_{X}(\alpha)(z)
\end{align}

Conjugate the identity by $\text{Stab}_{s}$ and we obtain that:
\begin{equation}
\begin{aligned}
\text{Stab}_s^{-1}\Psi_{X}(z)\text{Vertex}^{(g_{\text{Stab}_{s}(\alpha\otimes\beta)})}_{X}(z)=\text{Stab}_{s}^{-1}\mbf{G}_{X}(z)\text{Stab}_s(\alpha\otimes\beta)
\end{aligned}
\end{equation}

Now we prove the following result:
\begin{lem}
For the gluing operator $\text{Stab}_{s}^{-1}\mbf{G}_{X}(z)\text{Stab}_{s}:K_{T}(X^A)\rightarrow K_{T}(X^A)[[z]]$ conjugate by the stable envelopes, it is a block diagonal operator and the block diagonal component is given by $\mbf{G}_{X^A}(zq^{(\cdots)},zq^{(\cdots)})$. 
\end{lem}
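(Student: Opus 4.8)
The plan is to deduce the statement from the factorisation of the gluing operator together with the two conjugation computations already established, and then to upgrade the resulting diagonal identification to genuine block-diagonality using the $p_1\leftrightarrow p_2$ symmetry of $\mbf{G}_{X}$. I would start from the identity $\Psi_{X}(z)\text{Vertex}^{(g_{\alpha})}_{X}(z)=\mbf{G}_{X}(\alpha)(z)$ displayed above, conjugated by $\text{Stab}_{s}$, so that
\begin{align}
\text{Stab}_{s}^{-1}\mbf{G}_{X}(z)\text{Stab}_{s}(\alpha\otimes\beta)=[\text{Stab}_{s}^{-1}\Psi_{X}(z)\text{Stab}_{\infty}]\,[\text{Stab}_{\infty}^{-1}\text{Vertex}^{(g_{\text{Stab}_{s}(\alpha\otimes\beta)})}_{X}(z)],
\end{align}
after inserting $\text{Stab}_{\infty}\text{Stab}_{\infty}^{-1}=1$. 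The first bracket is governed by Proposition \ref{digaonal-argument}: its block-diagonal part is $q^{\Omega}\Psi_{X^A}(zq^{\frac{\Delta\mbf{w}_2-C\mbf{v}_2}{2}},zq^{\frac{C\mbf{v}_1-\Delta\mbf{w}_1}{2}})$, the off-diagonal corrections being exactly the fusion operator $H^{(\mbf{w}_1),(\mbf{w}_2)}(z)$, whose diagonal is the identity. The second bracket is controlled by the adjoint computation of the previous subsection, where it was shown that the diagonal part of $\text{Stab}_{\infty}^{-1}g(\text{Stab}_{\infty}(\alpha\otimes\beta))$ is $g_{\alpha}\otimes g_{\beta}$; consequently its block-diagonal part is $\text{Vertex}^{(g_{\alpha}\otimes g_{\beta})}_{X^A}$ evaluated at the same shifted Kähler variables.

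Multiplying the two diagonal parts and invoking the factorisation of the gluing operator on the fixed locus, $\mbf{G}_{X^A}(z)=\Psi_{X^A}(z)\text{Vertex}^{(g_{\alpha}\otimes g_{\beta})}_{X^A}(z)$, together with Proposition \ref{factorisation-vertex}, I would identify the block-diagonal component of $\text{Stab}_{s}^{-1}\mbf{G}_{X}(z)\text{Stab}_{s}$ with $\mbf{G}_{X^A}(zq^{\frac{\Delta\mbf{w}_2-C\mbf{v}_2}{2}},zq^{\frac{C\mbf{v}_1-\Delta\mbf{w}_1}{2}})$, the $q^{\Omega}$ prefactor being absorbed exactly as in the index-limit formula \ref{index-limit-formula}. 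This already pins down the value of the diagonal block; what remains is to show that there are no off-diagonal blocks at all.

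For the block-diagonality I would localise directly on $\mbf{G}_{X}$, following the virtual-localisation argument in the proof of Proposition \ref{digaonal-argument}. For an $A$-fixed relative--relative quasimap contributing to the $(F_{\beta},F_{\alpha})$-block, the virtual normal bundle splits as a fixed-domain part plus the broken-node contributions $\bigoplus_{p}T_{p}C_{i}'\otimes T_{p}C_{i+1}'$; when $F_{\alpha}=F_{\beta}$ and no node is broken the automorphism $\gamma:A\to\text{Aut}(C',p_1',p_2')$ must be trivial, the domain smooths to $\mbb{P}^{1}$, and the contribution is precisely $\mbf{G}_{F_{\alpha}}$, while the broken configurations produce only strictly triangular corrections. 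The key point, and the step I expect to be the main obstacle, is that these triangular corrections in fact vanish: because $\mbf{G}_{X}$ is defined symmetrically in the two relative points $p_1,p_2$ and without reference to any chamber, its conjugate is simultaneously lower-triangular (read off from the broken-node corrections above) and upper-triangular (read off from the same localisation after exchanging $p_1\leftrightarrow p_2$, equivalently $\sigma\mapsto-\sigma$). An operator that is triangular with respect to the ample order in both directions is block-diagonal, and this forces the off-diagonal blocks to be zero, completing the identification $\text{Stab}_{s}^{-1}\mbf{G}_{X}(z)\text{Stab}_{s}=\mbf{G}_{X^A}(zq^{\frac{\Delta\mbf{w}_2-C\mbf{v}_2}{2}},zq^{\frac{C\mbf{v}_1-\Delta\mbf{w}_1}{2}})$.
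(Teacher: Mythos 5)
Your factorisation $\mbf{G}_{X}=\Psi_{X}\circ\Psi_{X}^{\dagger}$ and the identification of the two diagonal parts is consistent with the paper (the paper manipulates the same identity, but in the opposite direction, \emph{after} the lemma is proved, in order to solve for $H^{(\mbf{w}_1),(\mbf{w}_2),\dagger}$), but as a proof it already has a gap at the step ``multiplying the two diagonal parts''. Writing $\mathsf{U}$ for the first bracket and $\mathsf{V}$ for the second, one has $(\mathsf{U}\mathsf{V})_{\mathrm{diag}}=\mathsf{U}_{\mathrm{diag}}\mathsf{V}_{\mathrm{diag}}+(\mathsf{U}_{\mathrm{off}}\mathsf{V}_{\mathrm{off}})_{\mathrm{diag}}$, and the cross term dies only if both factors are block-triangular in the same direction. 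You do have this for $\mathsf{U}$, which is $H^{(\mbf{w}_1),(\mbf{w}_2)}(z)$ times a block-diagonal operator with $H$ lower unitriangular; but for $\mathsf{V}=\lim_{a\to 0}\text{Stab}_{\infty}^{-1}\Psi^{\dagger}_{X}\text{Stab}_{s}$ the adjoint computation of the previous subsection identifies only its \emph{diagonal}, not any triangularity. In the paper, the triangularity of that factor (the statement that $q^{\Omega}H^{(\mbf{w}_1),(\mbf{w}_2),\dagger}$ is strictly block-triangular, via $q^{\Omega}H^{\dagger}_{\alpha\beta}=G_{F_{\alpha}}^{-1}q^{-\Omega}H^{-1}_{\alpha\beta}q^{\Omega}G_{F_{\beta}}$) is a \emph{consequence} of this very lemma, so invoking it here would be circular. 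This particular gap is repairable once block-diagonality is known: from $\mathsf{U}\mathsf{V}$ diagonal and $\mathsf{U}=HD$ one gets $\mathsf{V}=D^{-1}H^{-1}(\mathsf{U}\mathsf{V})$, which is automatically lower-triangular, and then $\mathsf{U}\mathsf{V}=D\,\mathsf{V}_{\mathrm{diag}}$. So everything rests on your final step.

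And that final step fails. Exchanging $p_1\leftrightarrow p_2$ is not ``equivalently $\sigma\mapsto-\sigma$'': the torus $\mbb{C}^{*}_{p}$ acts on the domain curve, while $\sigma$ is a chamber for the framing torus $A$ acting on the target, and the direction of any triangularity of $\text{Stab}_{s}^{-1}\mbf{G}_{X}\text{Stab}_{s}$ is dictated by the chamber entering the stable envelopes, which is the same on both sides and is unaffected by relabelling the marked points. What the $p_1\leftrightarrow p_2$ symmetry of $\mbf{G}_{X}$ actually gives is a statement about adjoints: with respect to the pairing \ref{twisted-bilinear-form}, the adjoint of $\text{Stab}_{\mc{C},s}$ is (up to normalisation) the inverse of the opposite-chamber envelope $\text{Stab}_{-\mc{C}}$, so the ``swapped'' localisation computes $\text{Stab}_{-\mc{C}}^{-1}\mbf{G}_{X}\text{Stab}_{-\mc{C}}$ — a different operator. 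The conclusion is ``the $\mc{C}$-conjugate is lower-triangular if and only if the $-\mc{C}$-conjugate is upper-triangular'', which is consistent but gives no diagonality for either operator. The missing ingredient is a rigidity input, and that is precisely what the paper's proof supplies: by Theorem 10.3.2 of \cite{O15}, the operator $\text{Stab}_{s}^{-1}\mbf{G}_{X}\text{Stab}_{s}$ is independent of the equivariant parameter $a$; since the broken-node configurations are exactly the source of $a$-dependent (and of off-diagonal) contributions, $a$-independence forces them all to vanish, and the unbroken-domain localisation then computes the diagonal exactly as in Proposition \ref{digaonal-argument}. The paper also records a second route: $\mbf{G}=\lim_{p\to\infty}\mbf{M}_{\mc{L}}(zp^{-\mc{L}},p)\mc{L}^{-1}$, conjugation by $\text{Stab}_{s}$ is the coproduct $\Delta_{s}$, and block-diagonality follows from Proposition 15 of \cite{OS22}. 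Your broken-node analysis, even granting its triangularity claim, can only ever yield triangularity, never vanishing.
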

\begin{proof}
There are two ways to prove this fact. By the Theorem 10.3.2 in \cite{O15} using the virtual localisation, the operator $\text{Stab}_s^{-1}\mbf{G}\text{Stab}_s$ is independent of the equivariant parametre $a$. Thus there is no contribution from the broken nodes, and thus using the same argument as in the proof of one can prove that when restricted to the block diagonal part, there is no contribution from the broken nodes. Thus using the argument of the proposition \ref{digaonal-argument}, we have that:
\begin{align}
\Delta_{s}(\mbf{G}_{X}(z))=q^{\Omega}\mbf{G}_{F_0}(zq^{\frac{\Delta\mbf{w}_2-C\mbf{v}_2}{2}}, zq^{\frac{C\mbf{v}_1-\Delta\mbf{w}_1}{2}})
\end{align}

The proof can also be carried out in the following way: Recall that the gluing operator $\mbf{G}$ is the limit of the quantum difference operator:
\begin{align}
\mbf{G}(z)=\lim_{p\rightarrow\infty}\mbf{M}_{\mc{L}}(zp^{-\mc{L}},p)\mc{L}^{-1}=\lim_{p\rightarrow\infty}\text{Const}\cdot\mc{L}\prod_{w}^{\leftarrow}\mbf{B}_{w}(zp^{-\mc{L}})\mc{L}^{-1}
\end{align}

Now the conjugation by $\text{Stab}_{s}$ is equivalent to the coproduct $\Delta_{s}$ on $\mbf{G}_{X}$. It has been proved in Proposition 15 in \cite{OS22} that if we restrict $\Delta_{s}(\mbf{G}_{X})$ to the block diagonal operator $K_{T}(F_{\alpha})\rightarrow K_{T}(F_{\alpha})$. 

\end{proof}

Now if we take the limit $a\rightarrow0$ we have that:
\begin{equation}
\begin{aligned}
H^{(\mbf{w}_1),(\mbf{w}_2)}(z)q^{\Omega}\mbf{G}_{X^A}((zq^{\frac{\Delta\mbf{w}_2-C\mbf{v}_2}{2}}, zq^{\frac{C\mbf{v}_1-\Delta\mbf{w}_1}{2}})q^{\Omega}H^{(\mbf{w}_1),(\mbf{w}_2),\dagger}(z)=q^{\Omega}\mbf{G}_{X^A}(zq^{\frac{\Delta\mbf{w}_2-C\mbf{v}_2}{2}}, zq^{\frac{C\mbf{v}_1-\Delta\mbf{w}_1}{2}})
\end{aligned}
\end{equation}

Since $G_{X^A}(z)$ is a block-diagonal operator, we obtain that:
\begin{align}
H^{(\mbf{w}_1),(\mbf{w}_2),\dagger}(z)=\mbf{G}_{X^A}(zq^{\frac{\Delta\mbf{w}_2-C\mbf{v}_2}{2}}, zq^{\frac{C\mbf{v}_1-\Delta\mbf{w}_1}{2}})^{-1}q^{-\Omega}(H^{(\mbf{w}_1),(\mbf{w}_2)}(z))^{-1}q^{\Omega}\mbf{G}_{X^A}(zq^{\frac{\Delta\mbf{w}_2-C\mbf{v}_2}{2}}, zq^{\frac{C\mbf{v}_1-\Delta\mbf{w}_1}{2}})
\end{align}

Thus each block part $(H^{\dagger}_{\alpha\beta})$ of $H^{(\mbf{w}_1),(\mbf{w}_2),\dagger}$ is given by:
\begin{align}
q^{\Omega}H^{\dagger}_{\alpha\beta}(z)=G_{F_{\alpha}}^{-1}q^{-\Omega}H^{-1}_{\alpha\beta}q^{\Omega}G_{F_{\beta}}
\end{align}

Thus we can see that $q^{\Omega}H^{(\mbf{w}_1),(\mbf{w}_2),\dagger}(z)$ is also a strictly block-triangular operator, and this operator can be thought of the adjoint of the operator $H^{(\mbf{w}_1),(\mbf{w}_2)}(z)$.

\section{\textbf{Rationality of the capped vertex function}}\label{section-6}

In this section we prove the main theorem of the paper:
\begin{thm}\label{Main-theorem}
The capped vertex function $\text{Cap}^{(\tau)}(z)\in K_{T}(X)_{loc}[[z^{\mbf{d}}]]$ is a rational function in $K_{T}(X)_{loc}(z^{\mbf{d}})$ for arbitrary $\tau\in K_{T}(pt)[\cdots,x_{i1}^{\pm1},\cdots,x_{in_i}^{\pm1},\cdots]^{\text{Sym}}$.
\end{thm}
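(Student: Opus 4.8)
The plan is to establish rationality first for polynomial descendents $\tau\in K_{T}(pt)[\cdots,x_{i1},\cdots,x_{in_i},\cdots]^{\text{Sym}}$, by induction on the dimension vector $\mathbf{v}$, and then to bootstrap to arbitrary Laurent $\tau$ using the rationality of the quantum difference operator. For the inductive step I fix $(\mathbf{v},\mathbf{w})$ and enlarge the framing to $\mathbf{w}+\mathbf{w}'$, with $\mathbf{w}'$ chosen large enough that the polarisation \ref{polarisaion-large-framing} is large in the sense of the large framing vanishing theorem; then $\text{Cap}^{(\tau)}_{M(\mathbf{v},\mathbf{w}+\mathbf{w}')}(z)=\tau(\mathcal{V})\mathcal{K}_{X}^{1/2}$ is classical, hence rational. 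I then split the framing as $\mathbf{w}+a\mathbf{w}'$ and send $a\to0$, so that $X^{A}=\bigsqcup_{\mathbf{v}_1+\mathbf{v}_2=\mathbf{v}}M(\mathbf{v}_1,\mathbf{w})\times M(\mathbf{v}_2,\mathbf{w}')$. Restricting the factorisation $\text{Cap}^{(\tau)}=\Psi\,\text{Vertex}^{(\tau)}$ to $X^{A}$, and combining the index-limit factorisation of the bare vertex (Proposition \ref{Bare-proposition-facto}) with the factorisation of the capping operator $i^{*}\Psi^{(\mathbf{w}+\mathbf{w}')}(z,0)(i^{*})^{-1}=Y^{(\mathbf{w}),(\mathbf{w}')}(z)q^{\Omega}\Psi^{(\mathbf{w}),(\mathbf{w}')}(\cdots)$, gives an identity of the schematic form
\begin{equation}
i^{*}\text{Cap}^{(\tau)}_{M(\mathbf{v},\mathbf{w}+\mathbf{w}')}(z)=Y^{(\mathbf{w}),(\mathbf{w}')}(z)\sum_{\mathbf{v}_1+\mathbf{v}_2=\mathbf{v}}\text{Cap}^{(\tau)}_{M(\mathbf{v}_1,\mathbf{w})}(\cdots)\otimes\text{Cap}^{(1)}_{M(\mathbf{v}_2,\mathbf{w}')}(\cdots).
\end{equation}

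By Propositions \ref{rationality-for-wKZ} and \ref{digaonal-argument}, the operator $Y^{(\mathbf{w}),(\mathbf{w}')}(z)$ is rational and unitriangular (its diagonal part is the identity), so the displayed identity is a triangular linear system with invertible rational diagonal. The left-hand side is rational by the large framing vanishing. On the right, the summand with $\mathbf{v}_2=0$ is exactly the target $\text{Cap}^{(\tau)}_{M(\mathbf{v},\mathbf{w})}(z)$, whereas every summand with $\mathbf{v}_2>0$ has $\mathbf{v}_1<\mathbf{v}$, so $\text{Cap}^{(\tau)}_{M(\mathbf{v}_1,\mathbf{w})}$ is rational by the inductive hypothesis, while the second factor $\text{Cap}^{(1)}_{M(\mathbf{v}_2,\mathbf{w}')}$ carries the trivial descendent over the large framing $\mathbf{w}'$ and is therefore classical. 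Inverting the unitriangular rational operator $Y^{(\mathbf{w}),(\mathbf{w}')}(z)$ then exhibits $\text{Cap}^{(\tau)}_{M(\mathbf{v},\mathbf{w})}(z)$ as a rational combination of rational classes, closing the induction.

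For arbitrary $\tau$ I would first deduce Proposition \ref{rationality-of-qde}: the matrix coefficients of $\mathbf{M}_{\mathcal{L}}(z)$ are, in the stable basis, capped vertices carrying the polynomial descendent $\det\mathcal{V}$, so the previous step already gives their rationality. Given a Laurent $\tau$, I choose an ample $\mathcal{L}=\bigotimes_{i}(\det\mathcal{V}_i)^{k_i}$ with $k_i\gg0$ so that $\tau\otimes\mathcal{L}\in K_{T}(pt)[\cdots,x_{i1},\cdots,x_{in_i},\cdots]^{\text{Sym}}$. Combining the translation symmetry \ref{translation-symmetry} of the bare vertex with the factorisation $\text{Cap}^{(\tau)}=\Psi\,\text{Vertex}^{(\tau)}$ and the quantum difference equation $\Psi(zp^{\mathcal{L}})\mathcal{L}=\mathbf{M}_{\mathcal{L}}(z)\Psi(z)$ produces a relation of the schematic form $\text{Cap}^{(\tau\otimes\mathcal{L})}(z)=\mathbf{M}_{\mathcal{L}}(z)\,\text{Cap}^{(\tau)}(zp^{\mathcal{L}})$, which read backwards becomes
\begin{equation}
\text{Cap}^{(\tau)}(z)=\mathbf{M}_{\mathcal{L}}(zp^{-\mathcal{L}})^{-1}\,\text{Cap}^{(\tau\otimes\mathcal{L})}(zp^{-\mathcal{L}}).
\end{equation}
Since $\text{Cap}^{(\tau\otimes\mathcal{L})}$ is rational by the polynomial case, $\mathbf{M}_{\mathcal{L}}(z)=\mathcal{L}+O(z)$ is rational and invertible over the field of rational functions, and the substitution $z\mapsto zp^{-\mathcal{L}}$ preserves rationality in $z$, the class $\text{Cap}^{(\tau)}(z)$ is rational.

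The hard part is the second stage: pinning down the precise quantum-difference identity relating $\text{Cap}^{(\tau\otimes\mathcal{L})}$, $\mathbf{M}_{\mathcal{L}}$ and $\text{Cap}^{(\tau)}$ with correctly matched shifts of $z$ (the $\Psi$ factors must cancel exactly), and verifying that $\mathbf{M}_{\mathcal{L}}(z)$ is invertible over the rational function field so that the inversion above is legitimate. The conceptual core on which everything rests, namely the rationality of the fusion operator in Proposition \ref{rationality-for-wKZ}, is already in hand: its proof reduces the a priori infinite ordered product $R_0^{-}=\prod^{\leftarrow}_{w}R_w^{-}$ of wall $R$-matrices to a finite one once one restricts to a fixed $\mathbf{v}$.
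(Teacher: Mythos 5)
Your first and third stages are the paper's own route: for polynomial $\tau$, large framing vanishing plus the index limit plus rationality and unitriangularity of the fusion operator, and for Laurent $\tau$, the quantum difference equation combined with the translation symmetry \ref{translation-symmetry}. Two remarks on the first stage. The induction on $\mathbf{v}$ is superfluous: since $K_T(X^A)=\bigoplus_{\mathbf{v}_1+\mathbf{v}_2=\mathbf{v}}K_T(M(\mathbf{v}_1,\mathbf{w})\times M(\mathbf{v}_2,\mathbf{w}'))$ is a direct sum, once you know that $Y^{(\mathbf{w}),(\mathbf{w}')}(z)^{-1}$ applied to the classical left-hand side is rational, you can simply project onto the summand $M(\mathbf{v},\mathbf{w})\times M(0,\mathbf{w}')$, where the class is exactly $\text{Cap}^{(\tau)}_{M(\mathbf{v},\mathbf{w})}(zq^{\Delta\mathbf{w}'/2})\otimes 1$; this is what the paper does, and it needs no information about the other summands. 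That is fortunate, because your description of those other summands is wrong: by Proposition \ref{Bare-proposition-facto} the second vertex factor carries the K\"ahler shift $p^{C\mathbf{v}_1-\Delta\mathbf{w}_1}$, which does not match the argument of $\Psi^{(\mathbf{w}')}$, so the $\mathbf{v}_2>0$ summands are not $\text{Cap}^{(\tau)}\otimes\text{Cap}^{(1)}$ but $\text{Cap}^{(\tau)}\otimes\mathbf{M}_{C\mathbf{v}_1-\Delta\mathbf{w}_1}(z)^{-1}\text{Cap}^{(1)}(\cdots)\mathcal{L}^{(\cdots)}$; the quantum difference operator intervenes precisely because of this mismatch.

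The genuine gap is your treatment of Proposition \ref{rationality-of-qde}. You assert that the matrix coefficients of $\mathbf{M}_{\mathcal{L}}(z)$ are, in the stable basis, capped vertices with the polynomial descendent $\det\mathcal{V}$, so that the first stage applies to them. This is not true: $\mathbf{M}_{\mathcal{L}}$ is defined on quasimaps relative at \emph{both} $p_1$ and $p_2$, with the insertion $\det H^*(\mathcal{V}\otimes\pi^*\mathcal{O}_{p_1})$ and a factor $\mathbf{G}^{-1}$; since the domain can bubble at the relative point $p_1$, that insertion is not a pullback of $\mathcal{L}$ under an evaluation map, and the resulting object is not of the form $\text{Cap}^{(\tau)}$ (one descendent point plus one relative point), so the polynomial case gives nothing about it. The paper's proof of Proposition \ref{rationality-of-qde} needs two ingredients absent from your proposal: (i) the very $\mathbf{v}_2>0$ summands described above, which after inverting $Y^{(\mathbf{w}_1),(\mathbf{w}_2)}(z)$ express $\mathbf{M}_{C\mathbf{v}_1-\Delta\mathbf{w}_1}(z)^{-1}\text{Cap}^{(1)}(\cdots)$ through rational quantities and hence give rationality of $\mathbf{M}_{\mathcal{L}}$ on the components $K_T(M(\mathbf{v}_2,\mathbf{w}_2))$, for arbitrary $\mathcal{L}$ because $\mathbf{v}_1,\mathbf{w}_1$ are arbitrary; and (ii) the presentation of \cite{OS22}, $\text{Stab}_{s,+}^{-1}\mathcal{A}_{\mathcal{L}}\text{Stab}_{s,+}=\text{Const}^s_{\mathcal{L}}\mathcal{A}^s_{\mathcal{L}}$ (formula \ref{qde-transform}), in which $\mathcal{A}^s_{\mathcal{L}}$ is manifestly rational: rationality on one component pins down the scalar $\text{Const}^s_{\mathcal{L}}$, which is independent of the component, and only then does rationality of $\mathbf{M}_{\mathcal{L}}$ on \emph{every} component, which is what \ref{general-cap} actually requires, follow. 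Without these steps the bootstrap to Laurent descendents does not close; by contrast, the shift bookkeeping and the invertibility of $\mathbf{M}_{\mathcal{L}}(z)=\mathcal{L}+O(z)$ over the field of rational functions, which you single out as the hard part, are routine.
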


\subsection{The main theorem for the special case}
We first focus on the case when 
\begin{equation}
\tau\in K_{T}(pt)[\cdots,x_{i1},\cdots,x_{in_i},\cdots]^{\text{Sym}}
\end{equation}
is a symmetric polynomial rather than a Laurent polynomial.

\begin{prop}\label{Main-prop-1}
The capped vertex function $\text{Cap}^{(\tau)}(z)\in K_{T}(X)_{loc}[[z^{\mbf{d}}]]$ is a rational function in $K_{T}(X)_{loc}(z^{\mbf{d}})$ for arbitrary $\tau\in K_{T}(pt)[\cdots,x_{i1},\cdots,x_{in_i},\cdots]^{\text{Sym}}$.
\end{prop}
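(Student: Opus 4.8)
The plan is to fix the framing $\mbf{w}$ and induct on the dimension vector $\mbf{v}$, using an auxiliary enlargement of the framing to bring the large framing vanishing theorem into play. The base case $\mbf{v}=0$ is trivial, since $M(0,\mbf{w})$ is a point and $\text{Cap}^{(\tau)}=\tau(0)$ is a constant, hence rational. For the inductive step I would fix a framing vector $\mbf{w}'$ that is large with respect to $\text{deg}(\tau)\bm{\theta}_{*}$ (in the sense of the large framing vanishing theorem), set $\tilde{\mbf{w}}=\mbf{w}+\mbf{w}'$, split the framing torus as $\tilde{\mbf{w}}=\mbf{w}+a\mbf{w}'$, and analyse the behaviour of $\text{Cap}^{(\tau)}_{M(\mbf{v},\tilde{\mbf{w}})}$ as $a\to0$ after restriction to the fixed locus $X^A=\bigsqcup_{\mbf{v}_1+\mbf{v}_2=\mbf{v}}M(\mbf{v}_1,\mbf{w})\times M(\mbf{v}_2,\mbf{w}')$.

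First I would establish the factorisation of the capped vertex. Starting from $\text{Cap}^{(\tau)}(a,z)=\Psi(a,z)\text{Vertex}^{(\tau)}(a,z)$, applying $i^{*}$ and conjugating turns this into $(i^{*}\Psi(i^{*})^{-1})\,(i^{*}\text{Vertex}^{(\tau)})$. In the $a\to0$ limit the capping factor is governed by the fusion relation
\[
i^*\Psi^{(\mbf{w}+\mbf{w}')}(z,0)(i^*)^{-1}=Y^{(\mbf{w}),(\mbf{w}')}(z)\,q^{\Omega}\,\Psi^{(\mbf{w}),(\mbf{w}')}\bigl(zq^{\frac{\Delta\mbf{w}'-C\mbf{v}_2}{2}},zq^{\frac{C\mbf{v}_1-\Delta\mbf{w}}{2}}\bigr),
\]
the vertex factor by the index-limit factorisation of Proposition \ref{Bare-proposition-facto}, and the block-diagonal operator $\Psi^{(\mbf{w}),(\mbf{w}')}=\Psi_{X^A}$ recombines the vertex factors into a product of smaller capped vertices, exactly as the multiplicativity $\text{Cap}_{X_1\times X_2}=\text{Cap}_{X_1}\otimes\text{Cap}_{X_2}$ follows from $\Psi_{X_1\times X_2}=\Psi_{X_1}\otimes\Psi_{X_2}$ (compare Proposition \ref{factorisation-vertex}). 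Up to the explicit index-limit prefactors (a monomial in $q$ together with the operator $q^{\Omega}$, whose precise powers I will not track here), this yields the master relation
\[
\lim_{a\to0}i^*\,\text{Cap}^{(\tau)}_{M(\mbf{v},\tilde{\mbf{w}})}(z)=Y^{(\mbf{w}),(\mbf{w}')}(z)\,q^{\Omega}\sum_{\mbf{v}_1+\mbf{v}_2=\mbf{v}}\text{Cap}^{(\tau_{\mbf{v}_1})}_{M(\mbf{v}_1,\mbf{w})}(zq^{\cdots})\otimes\text{Cap}^{(1)}_{M(\mbf{v}_2,\mbf{w}')}(zq^{\cdots}p^{\cdots}),
\]
where $\tau_{\mbf{v}_1}$ is the polynomial descendent obtained from $i^{*}\tau(\mc{V})$ by specialising the Chern roots of the second factor in the $a\to0$ limit, and for the extreme term $\mbf{v}_2=0$ one has $\tau_{\mbf{v}}=\tau$ and $M(0,\mbf{w}')=pt$.

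The concluding step is to solve this relation for $\text{Cap}^{(\tau)}_{M(\mbf{v},\mbf{w})}$. Because $\tilde{\mbf{w}}$ is large, the large framing vanishing theorem gives $\text{Cap}^{(\tau)}_{M(\mbf{v},\tilde{\mbf{w}})}(z)=\tau(\mc{V})\mc{K}^{1/2}$, so the left-hand side is $z$-independent, hence rational; likewise every factor $\text{Cap}^{(1)}_{M(\mbf{v}_2,\mbf{w}')}=\mc{K}^{1/2}$ with $\mbf{v}_2>0$ is $z$-independent by the same theorem, and equals $1$ when $\mbf{v}_2=0$. The operator $Y^{(\mbf{w}),(\mbf{w}')}(z)$ is rational: it is the limit of $\mbf{J}^{(\mbf{w}),(\mbf{w}')}(z)$, which is conjugate by the $z$-independent stable envelopes and $i^{*}$ to the fusion operator $H^{(\mbf{w}),(\mbf{w}')}(z)$, and the latter is rational by Proposition \ref{rationality-for-wKZ} together with Proposition \ref{digaonal-argument} (its diagonal part being the identity, so $Y=1+(\text{strictly triangular})$ is invertible over rational functions). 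Reading the master relation in the grading by $(\mbf{v}_1,\mbf{v}_2)$ and using the triangularity of $Y$, the $\mbf{v}_2=0$ component expresses $\text{Cap}^{(\tau)}_{M(\mbf{v},\mbf{w})}(zq^{\cdots})$ as the rational left-hand side minus triangular corrections that involve only $\text{Cap}^{(\tau_{\mbf{v}_1})}_{M(\mbf{v}_1,\mbf{w})}$ with $\mbf{v}_1<\mbf{v}$, which are rational by the induction hypothesis (applied to all polynomial descendents at smaller $\mbf{v}$). Undoing the monomial Kähler shift $z\mapsto zq^{\cdots}$ preserves rationality and closes the induction.

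The main obstacle will be the rigorous justification of the master relation in the $a\to0$ limit: one must verify that $i^{*}$, the limit, and the operator conjugation interact as claimed, that all the $q$-power Kähler shifts and the $q^{\Omega}$ prefactors arising from the two index limits (of $\Psi$ and of the bare vertex) are matched consistently, and, most importantly, that $i^{*}\tau(\mc{V})$ admits a finite $a\to0$ specialisation. This last point is precisely what forces the hypothesis that $\tau$ be a genuine polynomial rather than a Laurent polynomial; the Laurent case is genuinely more delicate and is treated separately through the rationality of the quantum difference operator. Everything else in the argument reduces to the already-established rationality of the fusion operator and to the large framing vanishing theorem.
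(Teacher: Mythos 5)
Your setup (auxiliary large framing $\tilde{\mbf{w}}=\mbf{w}+\mbf{w}'$, the splitting $\tilde{\mbf{w}}=\mbf{w}+a\mbf{w}'$, the $a\to0$ limit, and the rationality of the fusion operator) is the same as the paper's, but your ``master relation'' contains a genuine error, and the induction built on it does not close. The problem is the K\"ahler-argument mismatch on the second tensor factor: by Proposition \ref{Bare-proposition-facto} the bare vertex of the second factor emerges from the index limit with argument $zq^{\frac{C\mbf{v}_1-\Delta\mbf{w}_1}{2}}p^{C\mbf{v}_1-\Delta\mbf{w}_1}$, while the block-diagonal capping operator $\Psi^{(\mbf{w}),(\mbf{w}')}$ carries the argument $zq^{\frac{C\mbf{v}_1-\Delta\mbf{w}_1}{2}}$ \emph{without} the $p$-shift. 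Hence on any component with $\mbf{v}_2>0$ the product $\Psi^{(\mbf{w}')}(w)\,\text{Vertex}^{(1)}(wp^{\mc{L}})$ is not $\text{Cap}^{(1)}(wp^{\mc{L}})$: by the difference equation $\Psi(wp^{\mc{L}})\mc{L}=\mbf{M}_{\mc{L}}(w)\Psi(w)$ the discrepancy is exactly a factor of $\mbf{M}_{\mc{L}}(w)^{-1}$, which is why $\mbf{M}_{C\mbf{v}_1-\Delta\mbf{w}_1}(z)^{-1}$ appears explicitly when the paper performs this very recombination in the proof of Proposition \ref{rationality-of-qde}. This factor is a nontrivial $z$-dependent operator, not one of the ``monomial prefactors'' you chose not to track, and its rationality is not available at this stage: in the paper it is \emph{deduced from} the present proposition, so invoking it here would be circular. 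Consequently the correction terms in your inductive step (the components with $\mbf{v}_2>0$) involve operators of unknown rationality — even granting, via large framing vanishing, that $\text{Cap}^{(1)}_{M(\mbf{v}_2,\mbf{w}')}$ itself is a $z$-independent class — and the induction fails.

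The repair is what the paper actually does, and it removes the need for induction and for the problematic components altogether: invert the fusion operator, which is rational and unipotent-triangular by Propositions \ref{rationality-for-wKZ} and \ref{digaonal-argument}, and apply it to the $z$-independent class $(\tau(\mc{V})\otimes1)\mc{K}^{1/2}$ supplied by large framing vanishing on $M(\mbf{v},\tilde{\mbf{w}})$. Then read off only the component $M(\mbf{v},\mbf{w})\times M(0,\mbf{w}')$: there the second factor is a point, the $p$-shift mismatch is invisible, and the block-diagonal product is honestly $\text{Cap}^{(\tau)}_{M(\mbf{v},\mbf{w})}(zq^{\frac{\Delta\mbf{w}'}{2}})\otimes1$, so rationality is immediate; the components with $\mbf{v}_2>0$ are never needed. (If you insist on your route, the second factors could in principle be tamed by combining formula \ref{general-cap} with the large-framing eigenvector property $\mbf{M}_{\mc{L}}(z)\mc{K}^{1/2}=\mc{L}\otimes\mc{K}^{1/2}$ noted in the paper's closing remark — both of which are non-circular — but this requires its own careful justification and is absent from your write-up.)
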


We choose the polarisation $T^{1/2}$ and $\mbf{w}$ as in \ref{polarisaion-large-framing} such that it satisfies the large framing vanishing condition for some $\bm{\theta}_*\text{deg }\tau$. Since such polarisation is only the shift of the polarisation \ref{polarisation-0} by a large line bundle $\mc{L}-q^{-1}\mc{L}$, and the line bundle only affects the capped vertex with the multiplication by some $p^{(\cdots)}$. The shift by some $p^{(\cdots)}$ will not affect the proof for the rationality of the capped vertex function. So here for simplicity we ignore this shift by $\mc{L}-q^{-1}\mc{L}^{-1}$.

Now we fix the stability condition $\bm{\theta}$ and $\tau(\mc{V})$:
\begin{align}
\text{Cap}^{(\tau)}(z,a)=\Psi(z,a)\text{Vertex}^{(\tau)}(z,a)
\end{align}

Now choose the torus $A$ such that $\mbf{w}=\mbf{w}_1+a\mbf{w}_2$, and we choose the embedding $M(\mbf{v}_1,\mbf{w}_1)\times M(\mbf{v}_2,\mbf{w}_2)\hookrightarrow M(\mbf{v}_1+\mbf{v}_2,\mbf{w}_1+\mbf{w}_2)$.
In the case when $\mbf{w}$ is really large,for the case that $\tau$ is a symmetric polynomial of tautological classes. We have:
\begin{align}
\tau(\mc{V})\mc{K}^{1/2}=\Psi(z,a)\text{Vertex}^{(\tau)}(z,a)
\end{align}

Now we take the splitting $\mbf{w}=\mbf{w}_1+a\mbf{w}_2$, and now as $a\rightarrow0$, on the left hand side it would degenerate to $(\tau(\mc{V})\otimes1)\mc{K}^{1/2}$. So we have:
\begin{align}
(\tau(\mc{V})\otimes1)\mc{K}^{1/2}=(\Psi(z,0)\text{Vertex}^{(\tau)}_X(z,0))
\end{align}
and on the right hand side, it would degenerate to:
\begin{equation}
\begin{aligned}
&\lim_{a\rightarrow0}i^*\text{Stab}_{s}H^{(\mbf{w}_1),(\mbf{w}_2)}(z)(i^*\text{Stab}_{\infty})^{-1}\Psi^{(\mbf{w}_1,\mbf{w}_2)}_{X^A}(zq^{\frac{\Delta\mbf{w}_2-C\mbf{v}_2}{2}}, zq^{\frac{C\mbf{v}_1-\Delta\mbf{w}_1}{2}})\text{Vertex}^{(i^*\tau)}_{X^A}(zq^{\frac{\Delta\mbf{w}_2-C\mbf{v}_2}{2}}, zq^{\frac{C\mbf{v}_1-\Delta\mbf{w}_1}{2}}p^{C\mbf{v}_1-\Delta\mbf{w}_1})\\
=&q^{\Omega}H^{(\mbf{w}_1),(\mbf{w}_2)}(z)\Psi^{(\mbf{w}_1,\mbf{w}_2)}_{X^A}(zq^{\frac{\Delta\mbf{w}_2-C\mbf{v}_2}{2}}, zq^{\frac{C\mbf{v}_1-\Delta\mbf{w}_1}{2}})\text{Vertex}^{(i^*\tau)}_{X^A}(zq^{\frac{\Delta\mbf{w}_2-C\mbf{v}_2}{2}}, zq^{\frac{C\mbf{v}_1-\Delta\mbf{w}_1}{2}}p^{C\mbf{v}_1-\Delta\mbf{w}_1})
\end{aligned}
\end{equation}

Here $X^A=\bigsqcup_{\mbf{v}_1+\mbf{v}_2=\mbf{v}}M(\mbf{v}_1,\mbf{w}_1)\otimes M(\mbf{v}_2,\mbf{w}_2)$. Now we choose the component $M(\mbf{v},\mbf{w}_1)\times M(0,\mbf{w}_2)$. In this case the right hand side vertex function $\text{Vertex}^{(\tau)}_{X^A}(z)$ can be reduced to $\text{Vertex}_{M(\mbf{v},\mbf{w}_1)}^{(\tau)}(z)\otimes1$. Thus now we have that:

\begin{align}
H^{(\mbf{w}_1),(\mbf{w}_2)}(z)^{-1}\mc{L}^{-1}(\tau(\mc{V})\otimes1)\mc{K}^{1/2})=q^{\Omega}\mc{L}^{-1}(\Psi^{(\mbf{w}_1)}(zq^{\frac{\Delta\mbf{w}_2}{2}})\otimes1))(\text{Vertex}^{(\tau)}_{M(\mbf{v},\mbf{w}_1)}(zq^{\frac{\Delta\mbf{w}_2}{2}})\otimes1)
\end{align}

Thus we have that:
\begin{align}
\text{Cap}^{(\tau)}(zq^{\frac{\Delta\mbf{w}_2}{2}})\otimes1=\mc{L}q^{-\Omega}H^{(\mbf{w}_1),(\mbf{w}_2)}(z)^{-1}\mc{L}^{-1}(\tau(\mc{V})\otimes1)\mc{K}^{1/2})
\end{align}

Here the term $1$ on the left hand side is just the trivial one, while the $1$ on the right hand side corresponds to the structure sheaf $\mc{O}$ for the quiver varieties $M(\mbf{v}_2,\mbf{w}_2)$ for which the $\mbf{v}_2$ is not zero.

Now since $H^{(\mbf{w}_1),(\mbf{w}_2)}(z)$ has rational coefficients as $z$, this implies that $\text{Cap}^{(\tau)}(z,u)$ is rational. Thus the proof is finished for $\tau\in K_{T}(pt)[\cdots,x_{i1},\cdots,x_{in_i},\cdots]^{\text{Sym}}$.

Before we proceed to the proof of the general descendents $\tau\in K_{T}(pt)[\cdots,x_{i1}^{\pm1},\cdots,x_{in_i}^{\pm1},\cdots]^{\text{Sym}}$, we need to prove the rationality of the quantum difference operators, which can be thought of as an application of the result in this subsection.

\subsection{\textbf{Rationality of the quantum difference operators}}
The rationality of the capped vertex operator with descendents also has an interesting application: It can be used to prove the rationality of the quantum difference operators.

First let us define $\mc{A}_{\mc{L}}:=T_{\mc{L}}^{-1}\mbf{M}_{\mc{L}}$ the corresponding difference operator. It is well-known that:
\begin{align}
\mc{A}_{\mc{L}}\mc{A}_{\mc{L}'}=\mc{A}_{\mc{L}\mc{L}'}=\mc{A}_{\mc{L}'}\mc{A}_{\mc{L}}
\end{align}

It has been proved in \cite{OS22} that quantum difference operator $\mc{A}_{\mc{L}}$ can be expressed by the operators in the Maulik-Okounkov quantum affine algebra:
\begin{align}\label{qde-transform}
\text{Stab}_{s,+}^{-1}\mc{A}_{\mc{L}}\text{Stab}_{s,+}=\text{Const}^s_{\mc{L}}\mc{A}^s_{\mc{L}}
\end{align}

Here $\mc{A}^s_{\mc{L}}=T_{\mc{L}}^{-1}\mc{B}^s_{\mc{L}}$, $\mc{B}^s_{\mc{L}}=\mc{L}\prod_{w}\mbf{B}_{w}\in U_{q}^{MO}(\hat{\mf{g}}_{Q})$ is the product of the monodromy operators defined in \cite{OS22}. By definition, $\mc{A}_{\mc{L}}^s$ has the matrix coefficients as the rational functions in $u,z,t_e$. It is also easy to check that $\text{Const}^s_{\mc{L}}\text{Const}^s_{\mc{L}'}=\text{Const}^s_{\mc{L}\mc{L}'}$.

Using the rationality of the capped vertex function, we can prove the rationality of the quantum difference operators:
\begin{prop}\label{rationality-of-qde}
The quantum difference operator $\mbf{M}_{\mc{L}}(z)$ has the matrix coefficients as the rational function of $z,u,t_e$.
\end{prop}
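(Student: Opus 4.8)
The plan is to deduce rationality of $\mbf{M}_{\mc{L}}(z)$ from the already-established rationality of the capped vertex for polynomial descendents (Proposition \ref{Main-prop-1}), by exhibiting $\mbf{M}_{\mc{L}}$ as the intertwiner between the capped vertices of $\tau$ and of $\tau\otimes\mc{L}$. First I would record the reduction furnished by \ref{qde-transform}: writing $\mbf{M}_{\mc{L}}=T_{\mc{L}}\mc{A}_{\mc{L}}$ and $\text{Stab}_{s,+}^{-1}\mc{A}_{\mc{L}}\text{Stab}_{s,+}=\text{Const}^s_{\mc{L}}\mc{A}^s_{\mc{L}}$, where $\mc{A}^s_{\mc{L}}$ and the stable envelope $\text{Stab}_{s,+}$ already have rational matrix coefficients, it suffices to prove that the scalar $\text{Const}^s_{\mc{L}}(z)$ is rational. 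Using the multiplicativity $\mc{A}_{\mc{L}}\mc{A}_{\mc{L}'}=\mc{A}_{\mc{L}\mc{L}'}$ together with the invertibility $\mc{A}_{\mc{L}}=\mc{L}+O(z)$, it is moreover enough to treat the case of an effective (polynomial) tautological line bundle, for instance a product of the $\text{det}(V_i)$; the general case $\mc{L}=\mc{L}_1\mc{L}_2^{-1}$ then follows by taking ratios. The advantage of choosing $\mc{L}$ polynomial is that $\tau\mapsto\tau\otimes\mc{L}$ preserves the class of polynomial descendents, so that Proposition \ref{Main-prop-1} applies simultaneously to $\tau$ and to $\tau\otimes\mc{L}$.

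Next I would establish the key difference relation. Combining the factorization $\text{Cap}^{(\tau)}(z)=\Psi(z)\,\text{Vertex}^{(\tau)}(z)$, the quantum difference equation $\Psi(zp^{\mc{L}})\mc{L}=\mbf{M}_{\mc{L}}(z)\Psi(z)$, and the translation symmetry \ref{translation-symmetry} of the bare vertex, one eliminates the capping operator and the (non-rational) bare vertex to obtain, for every descendent $\tau$ and up to the explicit Kahler shift by $p^{\mc{L}}$, an identity of the shape
\begin{align}
\text{Cap}^{(\tau\otimes\mc{L})}(z)=\mbf{M}_{\mc{L}}(zp^{-\mc{L}})\,\text{Cap}^{(\tau)}(zp^{-\mc{L}}).
\end{align}
This is the mechanism by which the non-rational vertex cancels: the combination $\Psi(zp^{\mc{L}})\mc{L}\Psi(z)^{-1}$ is precisely $\mbf{M}_{\mc{L}}$, while the vertices on the two sides are matched by \ref{translation-symmetry}.

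Finally I would invert. I would choose polynomial descendents $\tau_1,\dots,\tau_N$ so that the classes $\tau_i(\mc{V})$ form a $K_{T}(pt)_{loc}$-basis of $K_{T}(X)_{loc}$; this is possible by Kirwan surjectivity \cite{MN18}, after multiplying by suitable invertible $\text{det}(V_i)$ to make all representatives polynomial. Assemble the rational matrices $Q(z)=\big[\,\text{Cap}^{(\tau_i)}(z)\,\big]_{i}$ and $P(z)=\big[\,\text{Cap}^{(\tau_i\otimes\mc{L})}(z)\,\big]_{i}$, both rational in $z$ by Proposition \ref{Main-prop-1}. Since $\text{Cap}^{(\tau_i)}(z)=\tau_i(\mc{V})\mc{K}_X^{1/2}+O(z)$ and $\mc{K}_X^{1/2}$ is invertible, the leading term $Q(0)$ is invertible, so $\det Q(z)\not\equiv0$ and $Q(z)$ is invertible over the field $K_{T}(X)_{loc}(z)$. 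The key relation reads $P(z)=\mbf{M}_{\mc{L}}(zp^{-\mc{L}})\,Q(zp^{-\mc{L}})$, whence
\begin{align}
\mbf{M}_{\mc{L}}(zp^{-\mc{L}})=P(z)\,Q(zp^{-\mc{L}})^{-1}
\end{align}
has rational matrix coefficients; replacing $z$ by $zp^{\mc{L}}$ gives rationality of $\mbf{M}_{\mc{L}}(z)$, equivalently of $\text{Const}^s_{\mc{L}}$.

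I expect the main obstacle to be the invertibility of $Q(z)$, i.e. that finitely many capped vertices with polynomial descendents already form a basis of $K_{T}(X)_{loc}$ over the fraction field of the Kahler variables; this rests on Kirwan surjectivity and on identifying the $z\to0$ leading term with $\tau_i(\mc{V})\mc{K}_X^{1/2}$. A secondary technical point is the careful bookkeeping of the $p^{\mc{L}}$-shifts and of the operator orderings (multiplication by $\mc{L}$ versus the action of $\Psi$) in the derivation of the key relation, so that the non-rational vertex contributions cancel exactly.
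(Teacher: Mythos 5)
Your proof is correct in substance, but it takes a genuinely different route from the paper's. The paper never inverts a matrix of capped vertices: it returns to the large-framing identity $\tau(\mc{V})\mc{K}^{1/2}=\Psi(z,a)\text{Vertex}^{(\tau)}(z,a)$ on an auxiliary variety $M(\mbf{v},\mbf{w})$ with $\mbf{w}=\mbf{w}_1+a\mbf{w}_2$ large, passes to the limit $a\rightarrow0$ using the vertex factorisation (Proposition \ref{Bare-proposition-facto}) and the lower-triangular rational operator $Y^{(\mbf{w}_1),(\mbf{w}_2)}(z)$, and observes that in the resulting identity the only non-manifestly-rational ingredient is $\mbf{M}_{C\mbf{v}_1-\Delta\mbf{w}_1}(z)^{-1}$ acting on the second tensor factor; since this only reaches line bundles of the special form $C\mbf{v}_1-\Delta\mbf{w}_1$ on the components $K_{T}(M(\mbf{v}_2,\mbf{w}_2))$, the paper must then transfer to arbitrary $\mc{L}$ and arbitrary varieties through \ref{qde-transform}, i.e.\ rationality of $\text{Const}^s_{\mc{L}}$, its independence of the component, and rationality of $\text{Stab}_{+,s}$ and $\mc{B}^s_{\mc{L}}$. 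You instead stay on the given $X$: your key difference relation is exactly the paper's \ref{general-cap}, which the paper only derives \emph{after} this proposition and uses in the opposite direction (to get the main theorem for Laurent $\tau$ from rationality of $\mbf{M}_{\mc{L}}$), and you then solve for $\mbf{M}_{\mc{L}}$ by inverting $Q(z)$, whose invertibility rests on $\text{Cap}^{(\tau_i)}(z)=\tau_i(\mc{V})\mc{K}_X^{1/2}+O(z)$ and Kirwan surjectivity. This is non-circular, since the relation needs only $\text{Cap}=\Psi\cdot\text{Vertex}$, the quantum difference equation for $\Psi$, and \ref{translation-symmetry}, all established independently, and Proposition \ref{Main-prop-1} is proved beforehand. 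Your route buys self-containedness --- no fusion operator, no $a\rightarrow0$ localisation analysis, no appeal to the \cite{OS22} monodromy-operator presentation or to component-independence of $\text{Const}^s_{\mc{L}}$ --- and it controls \emph{all} matrix coefficients of $\mbf{M}_{\mc{L}}$ at once, whereas the paper's extraction a priori controls $\mbf{M}_{\mc{L}}^{-1}$ only against the single vector $\text{Cap}^{(1)}$ before its transfer step; what the paper's route buys are the by-products needed later, namely the rationality of $\text{Const}^s_{\mc{L}}$ and the link between $\mbf{M}_{\mc{L}}$, the fusion operator $H^{(\mbf{w}_1),(\mbf{w}_2)}$, and the wall-crossing formula of Theorem \ref{wall-crossing formula}. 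Two small points for a final write-up: your opening reduction to $\text{Const}^s_{\mc{L}}$ is never actually used (the inversion argument gives $\mbf{M}_{\mc{L}}$ directly), so it should be dropped or made a corollary; and your flagged worry about the $p^{\mc{L}}$-shifts is warranted but harmless --- with the paper's stated conventions the two sides of \ref{general-cap} actually mismatch by $p^{2\mc{L}}$, and your version of the relation, with $\text{Cap}^{(\tau)}(zp^{-\mc{L}})$ on the right, is the internally consistent one once the shift in \ref{translation-symmetry} is read with the opposite sign; in any case any fixed monomial shift suffices for the inversion.
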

\begin{proof}
First we choose a quiver variety $M(\mbf{v},\mbf{w})$ such that $\mbf{w}$ is large enough to satisfy the large framing vanishing condition. Consider its corresponding capped vertex function with descendents $\tau$:
\begin{align}
\text{Cap}^{(\tau)}(z,a)=\Psi(z,a)\text{Vertex}^{(\tau)}(z,a)
\end{align}
equivalently:
\begin{align}
\tau(\mc{V})\mc{K}^{1/2}=\Psi(z,a)\text{Vertex}^{(\tau)}(z,a)
\end{align}
Now choose a torus action $A\subset T$ such that $\mbf{w}=\mbf{w}_1+a\mbf{w}_2$. Thus now we have that:
\begin{equation}
\begin{aligned}
&(\tau(\mc{V})\otimes1)\mc{K}^{1/2}\\
=&Y^{(\mbf{w}_1),(\mbf{w}_2)}(z)(\Psi^{(\mbf{w}_1)}(zq^{\frac{\Delta\mbf{w}_2-C\mbf{v}_2}{2}})\otimes\Psi^{(\mbf{w}_2)}(zq^{\frac{C\mbf{v}_1-\Delta\mbf{w}_1}{2}}))(\text{Vertex}^{(\tau)}(zq^{-\frac{C\mbf{v}_2-\Delta\mbf{w}_2}{2}}\otimes \text{Vertex}^{(1)}(zq^{\frac{C\mbf{v}_1-\Delta\mbf{w}_1}{2}}p^{C\mbf{v}_1-\Delta\mbf{w}_1})))\\
=&Y^{(\mbf{w}_1),(\mbf{w}_2)}(z)(\text{Cap}^{(\tau)}(zq^{\frac{\Delta\mbf{w}_2-C\mbf{v}_2}{2}})\otimes\mbf{M}_{C\mbf{v}_1-\Delta\mbf{w}_1}(z)^{-1}\text{Cap}^{(1)}(zq^{\frac{C\mbf{v}_1-\Delta\mbf{w}_1}{2}}p^{C\mbf{v}_1-\Delta\mbf{w}_1})\mc{L}^{C\mbf{v}_1-\Delta\mbf{w}_1})
\end{aligned}
\end{equation}
Here $\mbf{M}_{\mbf{n}}(z)$ with the integer vector $\mbf{n}=(n_1,\cdots,n_k)\in\mbb{Z}^k$ corresponds to the quantum difference operator $\mbf{M}_{\mc{L}}(z)$ such that $\mc{L}=\mc{L}_1^{n_1}\cdots\mc{L}_{k}^{n_k}$.

The rationality of capped vertex function and the operator $Y^{(\mbf{w}_1),(\mbf{w}_2)}(u,z)$ implies that the matrix coefficients of $\mbf{M}_{C\mbf{v}_1-\Delta\mbf{w}_1}(z)\in\text{End}(K_{T}(M(\mbf{v}_2,\mbf{w}_2)))[[z^{\mbf{d}}]]$ is rational in $z$. If we choose $\mbf{w}$ to satisfy the large framing condition ($w_i=C|\theta_i|+B_i$ with $C$ really large) and fix $\mbf{v}_2$ and $\mbf{w}_2$, the choice of $\mbf{v}_1$ and $\mbf{w}_1$ is arbitrary, this implies that the quantum difference operator $\mbf{M}_{\mc{L}}(z)$ is rational for some component $K_{T}(M(\mbf{v}_2,\mbf{w}_2))$ with fixed $\mbf{w}_2$ and arbitrary $\mc{L}$.

Then using the formula $\ref{qde-transform}$, we can see that the constant operator $\text{Const}_{\mc{L}}$ is rational in $z$. But the constant operator $\text{Const}_{\mc{L}}$ is independent of the choice of the component $K_{T}(M(\mbf{v},\mbf{w}))$, combining with the fact that $\text{Stab}_{+,s}$ and $\mc{B}^s_{\mc{L}}$ are rational, we conclude that $\mbf{M}_{\mc{L}}(z)$ is rational in $z$.
\end{proof}

\subsection{The main theorem for the general case}

For the general case 
\begin{align}
\tau(\mc{V})\in K_{T}(pt)[\cdots,x_{i1}^{\pm1},\cdots,x_{in_i}^{\pm1},\cdots]^{\text{Sym}}
\end{align}
we choose an ample line bundle $\mc{L}\in K_{T}(pt)[\cdots,x_{i1}^{\pm1},\cdots,x_{in_i}^{\pm1},\cdots]^{\text{Sym}}$ such that
\begin{align}
\tau(\mc{V})\otimes\mc{L}\in K_{T}(pt)[\cdots,x_{i1},\cdots,x_{in_i},\cdots]^{\text{Sym}}
\end{align}

Now we have that:

\begin{equation}
\begin{aligned}
&\text{Cap}^{(\tau\otimes\mc{L})}(a,z)=\Psi(a,z)\text{Vertex}^{(\tau\otimes\mc{L})}(a,z)=\Psi(a,z)\mc{L}\text{Vertex}^{(\tau)}(a,zp^{\mc{L}})
\end{aligned}
\end{equation}

Here we use the translation symmetry of the vertex function with descendents:
\begin{align}
\text{Vertex}^{(\tau\otimes\mc{L})}(a,z)=\mc{L}\text{Vertex}^{(\tau)}(a,zp^{\mc{L}})
\end{align}

Then combining with the capping operator $\text{Cap}^{(\tau)}(a,z)$, we have that:

\begin{equation}
\begin{aligned}
&\text{Cap}^{(\tau)}(a,zp^{\mc{L}})=\Psi(a,zp^{\mc{L}})\text{Vertex}^{(\tau)}(a,zp^{\mc{L}})=\mbf{M}_{\mc{L}}(z)\Psi(a,z)\mc{L}^{-2}\text{Vertex}^{(\tau\otimes\mc{L})}(a,z)\\
=&\mbf{M}_{\mc{L}}(z)\Psi(a,z)\text{Vertex}^{(\tau\otimes\mc{L}^{-1})}(a,zp^{2\mc{L}})=\mbf{M}_{\mc{L}}(z)\text{Cap}^{(\tau\otimes\mc{L}^{-1})}(a,zp^{2\mc{L}})
\end{aligned}
\end{equation}

Thus we have the following formula:
\begin{align}\label{general-cap}
\text{Cap}^{(\tau)}(a,zp^{\mc{L}})=\mbf{M}_{\mc{L}}(zp^{-\mc{L}})^{-1}\text{Cap}^{(\tau\otimes\mc{L})}(a,z)
\end{align}

Since $\mbf{M}_{\mc{L}}(z)$ is rational in $z$ by \ref{rationality-of-qde}, we can see that $\text{Cap}^{(\tau)}(a,z)$ is rational in $z$. This finish the proof of the main theorem.

\textbf{Remark.}
One application of the result is the eigenvector of the quantum difference operator $\mbf{M}_{\mc{L}}(z)$ in the large framing condition. In this case the capping operator $\text{Cap}^{(\tau\otimes\mc{L})}(a,z)$ and $\text{Cap}^{(\tau\otimes\mc{L})}(a,z)$ would descend to $(\tau\otimes\mc{L})K_{X}^{1/2}$ and $\tau\otimes K_{X}^{1/2}$. Therefore we have that:
\begin{align}
\mbf{M}_{\mc{L}}(z)\tau(\mc{V})K_{X}^{1/2}=\mc{L}\otimes\tau(\mc{V})K_{X}^{1/2}
\end{align}

We can see that in the large framing limit, the tautological class $\tau(\mc{V})K_{X}^{1/2}$ is an eigenvector of $\mbf{M}_{\mc{L}}(z)$ with the eigenvalue given by $\mc{L}$.

\subsection{GIT wall-crossing for the capped vertex function}

We define the matrix coefficients of the capped vertex function as:
\begin{align}
\chi^{(\tau)}_{\bm{\theta}}(z)=\chi(\text{Cap}^{(\tau)}_{\bm{\theta}})(z)\in K_{T}(pt)[[z]]
\end{align}
By the theorem above we know that this is a rational function in both equivariant variable and the Kahler variable $z$.

The goal of this subsection is to compute the GIT wall-crossing for the function $\chi^{(\tau)}_{\bm{\theta}}(z)$.

We recall a general integral formula from the appendix of \cite{AFO18} for the Euler characteristic for the locally free sheaf $\mc{F}$ over $X$:
\begin{align}
\chi(X_{\bm{\theta}},\mc{F})=\frac{1}{|W|}\int_{\chi_{\bm{\theta}}}\Delta_{\text{Weyl}}(s)\frac{\mc{F}(s)}{\wedge^*(T^*\text{Rep}(\mbf{v},\mbf{w}))}d_{Haar}s
\end{align}

Here the integral contour $\chi_{\bm{\theta}}$ is chosen as the image of the contour $\{|s|=1\}\subset A_{\mbf{v}}^{o}:=\{s|\forall w_{k,i}(s)\neq1\}\subset A_{\mbf{v}}\subset G_{\mbf{v}}$ in the middle homology $H_{\text{mid}}(A_{\mbf{v}}^{o},\{|\bm{\theta}|<<1\},\mbb{C})$.

Now for two different generic stability conditions $\bm{\theta}$ and $\bm{\theta}'$ and the corresponding quiver varieties $M_{\bm{\theta}}(\mbf{v},\mbf{w})$ and $M_{\bm{\theta}'}(\mbf{v},\mbf{w})$, we choose $\mbf{w}$ which is large for both $\bm{\theta}$ and $\bm{\theta}'$. Note that this requires that $\bm{\theta}$ and $\bm{\theta}'$ are not that "far away", i.e. each component $\theta_i,\theta_i'$ of $\bm{\theta}$ and $\bm{\theta}'$ are closed. In this case the capped operator $\text{Cap}^{(\tau)}$ has the form $\tau\mc{K}^{1/2}$ for both stability conditions. i.e. They have the same preimage in $K_{T}(M_{0}(\mbf{v},\mbf{w}))$ via the Kirwan map:
\begin{equation}
\begin{tikzcd}
K_{T}(M_{\bm{\theta}'}(\mbf{v},\mbf{w}))&K_{T}(M_{0}(\mbf{v},\mbf{w}))\arrow[r,"i_{\bm{\theta}}^*"]\arrow[l,"i_{\bm{\theta}'}^*"]&K_{T}(M_{\bm{\theta}}(\mbf{v},\mbf{w}))
\end{tikzcd}
\end{equation}

For the integral formula above, we can think about $\mc{F}$ as an element in $K_{T}(M_{0}(\mbf{v},\mbf{w}))$ generated by the tautological classes. Thus if we fix $\mc{F}\in K_{T}(M_{0}(\mbf{v},\mbf{w}))$, the continuous change of the stability condition in $\chi(X_{\bm{\theta}},\mc{F})$ can be thought of as the analytic contination of the integral in the variable of the equivariant variable over $T$ as long as the integral converges.

For the capped vertex with descendents $\tau\in K_{T}(pt)[\cdots,x_{i1},\cdots,x_{in_i},\cdots]^{\text{Sym}}$, if we fix the stability condition $\bm{\theta}$, recall the formula:
\begin{align}
\text{Cap}^{(\tau)}_{\bm{\theta}}(zq^{\frac{\Delta\mbf{w}_2}{2}})\otimes1=\mc{L}_{\bm{\theta}}q^{-\Omega}H^{(\mbf{w}_1),(\mbf{w}_2)}_{\bm{\theta}}(z)^{-1}\mc{L}^{-1}_{\bm{\theta}}(\tau(\mc{V})\otimes1)\mc{K}^{1/2})
\end{align}

Thus the corresponding integral is given as:
\begin{align}
\chi^{(\tau)}_{\bm{\theta}}(zq^{\frac{\Delta\mbf{w}_2}{2}})=\frac{1}{|W|}\int_{\chi_{\bm{\theta}}}\Delta_{\text{Weyl}}(s)\mc{L}_{\bm{\theta}}q^{-\Omega}H^{(\mbf{w}_1),(\mbf{w}_2)}_{\bm{\theta}}(z)^{-1}\mc{L}^{-1}_{\bm{\theta}}(\tau(\mc{V})\otimes1)\mc{K}^{1/2})d_{Haar}s
\end{align}

For the general $\tau\in K_{T}(pt)[\cdots,x_{i1}^{\pm1},\cdots,x_{in_i}^{\pm1},\cdots]^{\text{Sym}}$. Using the formula in \ref{general-cap}, the GIT wall-crossing can be expressed as follows:
\begin{thm}\label{wall-crossing formula}
For two stability condition $\bm{\theta}$ and $\bm{\theta}'$, the GIT wall-crossing for the Euler characteristic of the capped vertex function with descendents $\tau\in K_{T}(pt)[\cdots,x_{i1}^{\pm1},\cdots,x_{in_i}^{\pm1},\cdots]^{\text{Sym}}$ is given by:
\begin{equation}
\begin{aligned}
&\chi^{(\tau)}_{\bm{\theta}}(zp^{\mc{L}})-\chi^{(\tau)}_{\bm{\theta}'}(zp^{\mc{L}})\\
=&H_{\bm{\theta},\bm{\theta}'}^{(\tau)}+\frac{1}{|W|}\int_{\chi_{\bm{\theta}'}}\frac{\Delta_{\text{Weyl}}(s)}{\wedge^*(T^*\text{Rep}(\mbf{v},\mbf{w}))}q^{-\Omega}\times\\
&\times(\mbf{M}_{\mc{L}}^{\bm{\theta}}(zp^{-\mc{L}})^{-1}\mc{L}_{\bm{\theta}}H^{(\mbf{w}_1),(\mbf{w}_2)}_{\bm{\theta}}(zq^{-\frac{\Delta\mbf{w}_2}{2}})^{-1}\mc{L}^{-1}_{\bm{\theta}}-\mbf{M}_{\mc{L}}^{\bm{\theta}'}(zp^{-\mc{L}})^{-1}\mc{L}_{\bm{\theta}'}H^{(\mbf{w}_1),(\mbf{w}_2)}_{\bm{\theta}'}(zq^{-\frac{\Delta\mbf{w}_2}{2}})^{-1}\mc{L}^{-1}_{\bm{\theta}'})\times\\
&\times((\tau(\mc{V})\otimes\mc{L})\otimes1)\mc{K}^{1/2})d_{Haar}s
\end{aligned}
\end{equation}

Here $H^{(\tau)}_{\bm{\theta},\bm{\theta}'}(z)$ is the analytic continuation of $\chi^{(\tau)}_{\bm{\theta}}(z)$ via contour deformation from $\chi_{\bm{\theta}}$ to $\chi_{\bm{\theta}'}$. $\mc{L}$ is some ample line bundle such that it makes $\tau\otimes\mc{L}\in K_{T}(pt)[\cdots,x_{i1},\cdots,x_{in_i},\cdots]^{\text{Sym}}$.
\end{thm}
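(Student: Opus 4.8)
The plan is to combine the integral representation of the Euler characteristic from \cite{AFO18} with the operator expressions for the capped vertex already established in this section, and then to disentangle the two sources of $\bm{\theta}$-dependence in $\chi^{(\tau)}_{\bm{\theta}}$: the integration contour $\chi_{\bm{\theta}}$ and the integrand itself. First I would fix $\mbf{w}$ large for both $\bm{\theta}$ and $\bm{\theta}'$ simultaneously (possible once the two chambers are close enough, as noted in this subsection), choose a torus $A$ with $\mbf{w}=\mbf{w}_1+a\mbf{w}_2$, and record the integrand. For a symmetric polynomial descendent, Proposition \ref{Main-prop-1} gives the formula recalled in this subsection,
\[
\text{Cap}^{(\tau)}_{\bm{\theta}}(zq^{\frac{\Delta\mbf{w}_2}{2}})\otimes1=\mc{L}_{\bm{\theta}}q^{-\Omega}H^{(\mbf{w}_1),(\mbf{w}_2)}_{\bm{\theta}}(z)^{-1}\mc{L}^{-1}_{\bm{\theta}}(\tau(\mc{V})\otimes1)\mc{K}^{1/2},
\]
and for a general Laurent descendent the identity \ref{general-cap} promotes this to an expression carrying the extra factor $\mbf{M}^{\bm{\theta}}_{\mc{L}}(zp^{-\mc{L}})^{-1}$ acting on the $\tau\otimes\mc{L}$ version. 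Via Kirwan surjectivity the resulting class is represented by a symmetric rational function of $s$, which I will call $F_{\bm{\theta}}(s)$, and substitution into the integral formula yields
\[
\chi^{(\tau)}_{\bm{\theta}}(zp^{\mc{L}})=\frac{1}{|W|}\int_{\chi_{\bm{\theta}}}\Delta_{\text{Weyl}}(s)\frac{F_{\bm{\theta}}(s)}{\wedge^*(T^*\text{Rep}(\mbf{v},\mbf{w}))}d_{Haar}s,
\]
and likewise with $\bm{\theta}'$ throughout.

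The heart of the argument is an add-and-subtract step applied to $\int_{\chi_{\bm{\theta}'}}F_{\bm{\theta}}$. Writing $\chi^{(\tau)}_{\bm{\theta}}(zp^{\mc{L}})-\chi^{(\tau)}_{\bm{\theta}'}(zp^{\mc{L}})$ as the sum of (i) the difference between integrating the single integrand $F_{\bm{\theta}}$ over $\chi_{\bm{\theta}}$ and over $\chi_{\bm{\theta}'}$, and (ii) the integral over the common contour $\chi_{\bm{\theta}'}$ of the difference $F_{\bm{\theta}}-F_{\bm{\theta}'}$, I would identify term (i) with the residue correction picked up when deforming the contour from $\chi_{\bm{\theta}}$ to $\chi_{\bm{\theta}'}$, which is by definition the analytic continuation $H^{(\tau)}_{\bm{\theta},\bm{\theta}'}$. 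This step is legitimate precisely because $F_{\bm{\theta}}$ is rational in $s$, by the rationality results of Proposition \ref{rationality-for-wKZ} and Proposition \ref{rationality-of-qde}, so the contour may be moved across its poles and the discrepancy is a controlled residue sum. Term (ii) is exactly the integral appearing in the statement, since the prefactors $\Delta_{\text{Weyl}}(s)$ and $\wedge^*(T^*\text{Rep}(\mbf{v},\mbf{w}))$ are stability-independent, and $F_{\bm{\theta}}-F_{\bm{\theta}'}$ is the difference of the two fusion-operator expressions $\mbf{M}_{\mc{L}}^{\bm{\theta}}(zp^{-\mc{L}})^{-1}\mc{L}_{\bm{\theta}}H^{(\mbf{w}_1),(\mbf{w}_2)}_{\bm{\theta}}(zq^{-\frac{\Delta\mbf{w}_2}{2}})^{-1}\mc{L}^{-1}_{\bm{\theta}}$ and its $\bm{\theta}'$ analogue, applied to $((\tau(\mc{V})\otimes\mc{L})\otimes1)\mc{K}^{1/2}$. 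Collecting the two contributions produces the stated formula.

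The main obstacle will be the careful treatment of term (i): I must verify that the contours $\chi_{\bm{\theta}}$ and $\chi_{\bm{\theta}'}$ represent the prescribed classes in the middle homology $H_{\text{mid}}(A_{\mbf{v}}^{o},\{|\bm{\theta}|\ll1\})$ of \cite{AFO18}, that $F_{\bm{\theta}}$ is integrable over both, and that the contour-deformation discrepancy is genuinely the analytic continuation $H^{(\tau)}_{\bm{\theta},\bm{\theta}'}$ and not an uncontrolled boundary contribution; here the closeness of $\bm{\theta}$ and $\bm{\theta}'$ together with the large-framing normalization confine the relevant poles to a controlled region. The remaining steps — substituting \ref{general-cap} and the fusion-operator formula, and checking that the stability-independent prefactors cancel in forming the difference $F_{\bm{\theta}}-F_{\bm{\theta}'}$ — are routine bookkeeping and require no new input.
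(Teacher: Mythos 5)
Your proposal follows the same route as the paper: express $\chi^{(\tau)}_{\bm{\theta}}(zp^{\mc{L}})$ via the AFO integral formula with integrand $q^{-\Omega}\mbf{M}_{\mc{L}}^{\bm{\theta}}(zp^{-\mc{L}})^{-1}\mc{L}_{\bm{\theta}}H^{(\mbf{w}_1),(\mbf{w}_2)}_{\bm{\theta}}(zq^{-\frac{\Delta\mbf{w}_2}{2}})^{-1}\mc{L}^{-1}_{\bm{\theta}}((\tau(\mc{V})\otimes\mc{L})\otimes1)\mc{K}^{1/2}$, obtained by combining the fusion-operator formula for symmetric descendents with the quantum-difference-operator identity \ref{general-cap}, and then split the difference of the two Euler characteristics into the contour-deformation term (identified with $H^{(\tau)}_{\bm{\theta},\bm{\theta}'}$) plus the integral over the common contour $\chi_{\bm{\theta}'}$ of the difference of integrands. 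This is precisely the paper's argument, including the choice of $\mbf{w}$ large for both chambers and the use of Kirwan surjectivity to view the integrand as a fixed rational function of $s$.
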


For $\bm{\theta}$ and $\bm{\theta}'$ such that the corresponding contour is really close, $H^{(\tau)}_{\bm{\theta},\bm{\theta}'}(z)=0$. Thus in this case the GIT wall-crossing only depends on the difference of the operator $H^{(\mbf{w}_1),(\mbf{w}_2)}_{\bm{\theta}}(z)^{-1}$ of different stability condition $\bm{\theta}$.

For the choice of the stability condition $\bm{\theta}=(1,\cdots,1)$ and $\bm{\theta}'=-\bm{\theta}$, usually the corresponding Maulik-Okounkov quantum affine algebra $U_{q}(\hat{\mf{g}}_{Q})$ is isomorphic to the double of the corresponding preprojective $K$-theoretic Hall algebra $\mc{A}_{Q}$. The corresponding action on $K_{T}(M_{\bm{\theta}}(\mbf{w}))$ for $\bm{\theta}$ and $-\bm{\theta}$ is dual to each other due to the construction given in \cite{YZ18}.  In this case it is expected that the fusion operator $H^{(\mbf{w}_1),(\mbf{w}_2)}(z)$ and the quantum difference operator $\mbf{M}_{\mc{L}}(z)$ can be solved in the slope subalgebra $\mc{A}_{Q}$.

In the case when the quiver $Q$ is finite, Jordan or affine type $A$, it has been proved that the MO quantum affine algebra of $\bm{\theta}=(1,\cdots,1)$ is isomorphic to $\mc{A}_{Q}$ in \cite{Z24} and \cite{Z24-1}. The corresponding capped vertex function and the fusion operator has been computed in \cite{AS24} and \cite{AD24}. Thus in these cases the GIT wall-crossing can be computed explicitly.

\end{document}